\documentclass[11pt,a4paper]{amsart}

\usepackage{pdfsync} 
\usepackage{amssymb, amsmath, amsthm, amsfonts,mathtools, latexsym,bm}
\usepackage[colorlinks = true,
            linkcolor = blue,
            urlcolor  = blue,
            citecolor = purple,
            anchorcolor = blue]{hyperref}

\usepackage{tikz-cd}
\usepackage{dsfont}
\usepackage{extarrows}
\usepackage{mathrsfs}
\usepackage{arydshln}
\usepackage{enumerate}
\usepackage{booktabs}
\usepackage[square, comma, sort&compress, numbers]{natbib}
\usepackage{float}
\usepackage{hyperref}

\usepackage[figuresright]{rotating}

\newcommand{\tabincell}[2]{\begin{tabular}{@{}#1@{}}#2\end{tabular}}

\numberwithin{equation}{section}
\theoremstyle{plain}
\newtheorem{theorem}{Theorem}[section]
\newtheorem{proposition}[theorem]{Proposition}
\newtheorem{corollary}[theorem]{Corollary}
\newtheorem{lemma}[theorem]{Lemma}

\theoremstyle{definition}

\theoremstyle{remark}


\newcommand{\m}[2]{\ensuremath{M_{2^{#1}}^{#2}}}
\newcommand{\mm}[2]{\ensuremath{M_{p^{#1}}^{#2}}}
\newcommand{\Z}{\ensuremath{\mathbb{Z}}}
\newcommand{\zz}[1]{\ensuremath{\mathbb{Z}/2^{#1}}}
\newcommand{\zp}[1]{\ensuremath{\mathbb{Z}/p^{#1}}}
\newcommand{\CC}{\ensuremath{C^{n+2,t}_{r}}}
\newcommand{\ul}{\ensuremath{\underline}}
\newcommand{\ol}{\ensuremath{\overline}}
\newcommand{\cc}{\ensuremath{C^{n+2,r}_{r}}}
\newcommand{\uc}{\ensuremath{\ul{C}}}
\newcommand{\oc}{\ensuremath{\ol{C}}}
\newcommand{\ceta}{\ensuremath{C^{n+2}_\eta}}

\newcommand{\A}[2]{\ensuremath{\mathsf{A}_{#1}^{#2}}}

\newcommand{\SW}{Spanier-Whitehead\,}

\newcommand{\C}{\ensuremath{\bm{C}}}
\newcommand{\E}{\ensuremath{\mathcal{E}}}
\newcommand{\xra}{\ensuremath{\xrightarrow}}
\newcommand{\an}{\ensuremath{\mathsf{A}_n^2}}

\DeclareMathOperator{\coker}{coker}
\DeclareMathOperator{\im}{im}

\DeclareMathOperator{\sq}{Sq}
\DeclareMathOperator{\Hom}{Hom}
\DeclareMathOperator{\Aut}{Aut}

\newcommand{\mat}[4]{\ensuremath{\footnotesize{\begin{pmatrix}
    #1&#2\\
    #3&#4
  \end{pmatrix}}}}

\newcommand{\ma}[2]{\ensuremath{\left(\begin{smallmatrix}
  2^{#1}&\eta\\
  0~&2^{#2}
\end{smallmatrix}}\right)}

\newenvironment{amssidewaystable}
  {\begin{sidewaystable}\vspace*{.5\textwidth}\begin{minipage}{\textheight}\centering}
  {\end{minipage}\end{sidewaystable}}


\title[Homotopy Classes of Based Maps between $\mathsf{A}_n^2$-complexes]{The Homotopy Classification of Based Maps between $\mathsf{A}_n^2$-complexes}
\author{Pengcheng Li}
\email{lipcaty@outlook.com}
\address{Department of Mathematics, School of Sciences, Great Bay University, Dongguan, Guangdong \rm{523000}, China}
\email{lipcaty@outlook.com}

\keywords{$\mathsf{A}_n^2$-complexes, Chang complexes, sets of homotopy classes, self-homotopy equivalences}
\subjclass[2020]{55Q05, 55P10}

\begin{document}

\maketitle

\begin{abstract}
  Let $X,Y$ be $(n-1)$-connected finite pointed CW-complexes of dimension at most $n+2$, $n\geq 3$. In this paper we give elementary proofs of the abelian group structure of $[X,Y]$ of homotopy classes of based maps from $X$ to $Y$, which was due to Baues and Schmidt. Furthermore, we determine the explicit generators associated to $[X,Y]$. As an application, we compute certain (sub)groups of self-homotopy equivalences of certain Chang complexes. 
\end{abstract}


\section{Introduction}\label{sec:intro}
By  \emph{$\A{n}{k}$-complexes} we mean $(n-1)$-connected finite CW-complexes of dimension at most $n+k$, $n\geq k+1$. We say that an $\A{n}{k}$-complex $X$ is \emph{elementary} or \emph{indecomposable} if $X$ does not admit a nontrivial wedge decomposition; otherwise it is \emph{decomposable}. To avoid confusion we shall subsequently use the word ``indecomposable'' instead of ``elementary" to describe $\an$-complexes.  It is well-known that indecomposable $\A{n}{1}$-complexes consist of spheres $S^{n},S^{n+1}$ and \emph{indecomposable Moore spaces} $\mm{r}{n}=S^{n}\cup_{p^r} e^{n+1}$ of homotopy type $(\Z/p^r,n)$, where $p$ is a prime, $r$ is a positive integer. Motivated by Whitehead's work \cite{Whitehead48,Whitehead49}, in 1950 Chang \cite{Chang1950} proved that every $\an$-complex is homotopy equivalent to a finite wedge sum of suitable suspensions of indecomposable $\A{n}{1}$-complexes and the following four \emph{indecomposable Chang complexes}:
\begin{align*}
  &C^{n+2,t}=(S^n\vee S^{n+1})\cup_{\binom{\eta}{2^t}}\C S^{n+1},\quad C^{n+2}_r=S^n\cup_{(2^r,\eta)}\C (S^n\vee S^{n+1}),\\[1ex]
  &\ceta=S^n\cup_\eta\C S^{n+1},\quad \CC=(S^n\vee S^{n+1})\cup_{\ma{r}{t}}\C (S^n\vee S^{n+1}).
\end{align*}
Here $\bm C X$ denotes the reduced cone on the space $X$, the  matrices enclosed within round brackets serve as representations for the attaching maps; $\eta$ is the iterated suspensions of the Hopf map $\eta\colon S^3\to S^2$ (without confusion we simply denote $\Sigma^{n-2}\eta $ by $\eta$ for different $n$); $r,t$ are positive integers. 

For based CW-complexes $X,Y$, let $[X,Y]$ be the set of homotopy classes of based maps from $X$ to $Y$.  In 1950s, Hilton \cite{Hilton50,Hilton51} computed the homotopy groups $\pi_{n+i}(X)$ of $\an$-complexes $X$ for $i\leq 2$, while Brown and Copeland \cite{BC59} determined the groups $[X,Y]$ of indecomposable Moore spaces of dimension at most $n+2$. As indicated in \cite{Hilton53}, the suspension map $\Sigma \colon [X,Y]\to [\Sigma X,\Sigma Y]$ is an isomorphism for $n\geq 3$ when $X,Y$ are \emph{indecomposable} $\an$-complexes, while it is an isomorphism for $n\geq 4$ when $X,Y$ are general $\an$-complexes, by the generalized Freudenthal suspension theorem (cf. \cite[Theoerm 1.21]{Cohenbook}).  In 1985, Baues \cite{Baues85} exhibited a complete list of the group structure of $[X,Y]$ for indecomposable $\an$-complexes $X,Y$, as worked out by T. Schmidt \cite{Schmidt84}. Under the supervision of Baues, Schmidt obtained the group structure of $[X,Y]$ by analysing the following two short exact sequences of abelian groups:
\begin{align}
  E(X,Y)\rightarrowtail &{[X,Y]}\twoheadrightarrow \mathrm{PH}_n^2(X,Y),\label{SES:BS84:1}\\[1ex]
  \Gamma^{(n)}(X,Y)\rightarrowtail&{[X,Y]}\twoheadrightarrow {\mathrm{Prin}(X,Y)/\simeq}.\label{SES:BS84:2}
\end{align}
See  \cite{Schmidt84} for detailed constructions. Schmidt computed the groups in (\ref{SES:BS84:1}) and (\ref{SES:BS84:2}) on both sides of $[X,Y]$ and proved that there are exactly three cases for the groups $[X,Y]$ of indecomposable $\an$-complexes $X,Y$:
\begin{enumerate}[(i)]
  \item For some $\an$-complexes $X,Y$,  $E(X,Y)=0$, hence $[X,Y]\cong \mathrm{PH}_n^2(X,Y)$;
  \item For some $\an$-complexes $X,Y$,  $\Gamma^{(n)}(X,Y)=0$, and hence we have $[X,Y]\cong \mathrm{Prin}(X,Y)/\simeq$;
  \item  For those indecomposable $\an$-complexes $X,Y$ such that the extension (\ref{SES:BS84:1}) is a real short exact sequence,  the extensions (\ref{SES:BS84:2}) are splitting, and hence there are isomorphisms 
  \[ [X,Y]\cong \Gamma^{(n)}(X,Y)\oplus\mathrm{Prin}(X,Y)/\simeq.\]   
\end{enumerate}

Although Schmidt constructed partial generators of the groups $[X,Y]$,  \cite{Schmidt84} doesn't include all the generators of the groups $[X,Y]$, particularly in cases where $X$ and $Y$ are indecomposable Chang complexes of three or four cells. The main purpose of this paper is to determine complete generating sets  of the groups $[X,Y]$ listed in \cite{Baues85,Schmidt84}, where $X,Y$ are indecomposable $\an$-complexes $X,Y$, $n\geq 3$. We shall obtain the groups $[X,Y]$ and their associated generators by elementary and direct approach, which differs from that of Baues and Schmidt. As shown in Section \ref{sec:proofs-tables}, some of the groups $[X,Y]$ can be easily computed by computing the (short) exact sequences for $[X,Y]$ induced by certain homotopy cofibre sequence for $X$ or certain homotopy fibre sequence for $Y$. For the groups $[X,Y]$ that cannot be computed by the above ``one step'', we shall relate certain exact sequence for $[X,Y]$ to that for $[X,Z]$ or that for $[W,Y]$, where $Z,W$ are suitable $\an$-complexes; then the groups $[X,Y]$ and the associated generators can be obtained from that of $[X,Z]$ or that of $[W,Y]$. 
The characterizations of complete generators of the groups $[X,Y]$ of indecomposable $\an$-complexes $X,Y$ are summarized in Section \ref{sec:tables}. 

There are scores of situations in which the group structure of $[X,Y]$ with explicit generators play an important role. Firstly, it is very natural to apply these group structure and explicit generators to study self-homotopy equivalences of $\an$-complexes, which are the main applications discussed in the paper. Secondly, explicit generators can be used to study homotopy decomposition of loop spaces of certain Chang complexes by determining  suitable homotopy cofibre sequences for the smash product $\cc\wedge \cc$ \cite{ZLP}. Thirdly, the $(2n+1)$-skeleton of an $(n-1)$-connected $(2n+2)$-dimensional manifold is an $\an$-complex, the group structure of $[X,Y]$ and their explicit generators are vital to determine the homotopy type of the suspension of the manifold, which has many applications in geometry and physics, see \cite{ST19,lipc23,CS22}. Finally, the group structure of $[X,Y]$ with $X,Y$ indecomposable Moore spaces and their generators are cornerstones of Oka's work \cite{Oka84} on the ring spectrum structure of the Moore spectrum $\Sigma^{\infty}(S^1\cup_q\C S^1)$. We believe that our generators of $[X,Y]$ would be valuable in the characterization of multiplicative structure on the suspension spectra of certain Chang complexes.

As an application of the groups of $[X,Y]$, together with their explicit generators, we study self-homotopy equivalences of $\an$-complexes $X,Y$, $n\geq 4$.
Let $\E(X)$ be the group of the homotopy classes of based self-homotopy equivalences of a pointed space $X$. For indecomposable $\an$-complexes $X$ with $n\geq 3$, most of the groups $\E(X)$ were known, see \cite[Part IV]{Baues85}. As an immediate consequence of results in Section \ref{sec:tables},  we have the following complete characterization of $\E(\CC)$ (Theorem \ref{thm:ECC}). Note that only the splitness of the short exact sequence (\ref{ES:ECC}) in the case $t=r$ is new, since other information in Theorem \ref{thm:ECC} was firstly obtained by Schmidt, see \cite{Baues85,Schmidt84}.

\begin{theorem}\label{thm:ECC}
  Let $n\geq 3$ and let $r,t\geq 1$ be integers.
 There is a split short exact sequence
  \begin{equation}
    \begin{tikzcd}[column sep=small]
    \zz{\min(r,t)+1}\ar[r,tail]&\E(\CC)\ar[r,two heads,"\phi"]&\Aut(\zz{t+1})\oplus\Aut(\zz{r+1}),
  \end{tikzcd}\tag{$\natural$}\label{ES:ECC}
  \end{equation}
  where $\min(r,t)$ denotes the minimum of $r$ and $t$, $\phi(f)=(\pi_{n+1}(f),\pi^{n+1}(f))$.  The associated action is given by \begin{gather*}
    \Aut(\zz{t+1})\oplus\Aut(\zz{r+1})\to \Aut(\zz{min(r,t)+1}),\\
    (\phi,\varphi)\mapsto p_\ast(\phi)q_\ast(\varphi^{-1}),
  \end{gather*}
  where $p\colon \zz{t+1}\to\zz{\min(r,t)+1}$ and $q\colon \zz{r+1}\to \zz{\min(r,t)+1}$ are the canonical projections.
\end{theorem}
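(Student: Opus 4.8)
The plan is to realize $\E(C)$ inside $[C,C]$, whose group structure and a generating set are determined in Section~\ref{sec:tables}, and to extract \eqref{ES:ECC} from three ingredients: the two cyclic invariants $\pi_{n+1}(C)$ and $\pi^{n+1}(C)=[C,S^{n+1}]$ that $\phi$ records; a single ``rank one'' self-map whose multiples constitute $\ker\phi$; and an explicit section built from self-maps supported on complementary cells. First I would fix notation from the cell structure: the $(n+1)$-skeleton is $C^{(n+1)}\simeq\m{r}{n}\vee S^{n+1}$ and the top cell is attached along $\psi=i_*\eta+2^t\iota_{n+1}\in\pi_{n+1}(\m{r}{n}\vee S^{n+1})\cong\zz{}\oplus\Z$, where $i\colon S^n\hookrightarrow\m{r}{n}$ is the bottom-cell inclusion. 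For $n\geq4$ all the relevant homotopy and cohomotopy groups lie in the stable range, so (from the cofiber sequences of Section~\ref{sec:prelim}, equivalently from Section~\ref{sec:tables}) one has $\pi_{n+1}(C)\cong\zz{t+1}$, generated by $g_2\colon S^{n+1}\hookrightarrow C^{(n+1)}\hookrightarrow C$, and $\pi^{n+1}(C)\cong\zz{r+1}$, generated by a map $\gamma\colon C\to S^{n+1}$ that extends the pinch $\m{r}{n}\to S^{n+1}$ and collapses the $S^{n+1}$-summand; I also record the identity $i_*\eta=2^t g_2$ in $\pi_{n+1}(C)$, a consequence of the nullity of $\psi$ in $C$, which is needed below. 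For $f\in\E(C)$ the maps $f_*$ on $\pi_{n+1}(C)$ and $f^*$ on $\pi^{n+1}(C)$ are automorphisms, and since the target is abelian, functoriality makes $\phi=\bigl(\pi_{n+1}(-),\pi^{n+1}(-)\bigr)$ a homomorphism $\E(C)\to\Aut(\zz{t+1})\oplus\Aut(\zz{r+1})$.

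Next I would prove surjectivity of $\phi$ and produce a splitting in one stroke. For an odd integer $u$ let $F_u\colon C\to C$ be the identity on $\m{r}{n}$, of degree $u$ on the $S^{n+1}$-summand and of degree $u$ on the top cell; this is well defined because the restriction of $F_u$ to $C^{(n+1)}$ composed with $\psi$ agrees with $\psi$ composed with the degree-$u$ self-map of $S^{n+1}$, it is an equivalence (it is an isomorphism on integral homology), and one checks $\phi(F_u)$ is multiplication by $u$ on $\pi_{n+1}(C)$ and the identity on $\pi^{n+1}(C)$ — the latter because $\gamma$ kills the $S^{n+1}$-summand and $u$, being odd, acts trivially on the $\zz{}$-contribution of the top cell to $\pi^{n+1}(C)$. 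Dually, for an odd integer $d$ let $G_d\colon C\to C$ be the identity on the $S^{n+1}$-summand and on the top cell and a degree-$d$ self-equivalence of $\m{r}{n}$ on the remaining cells; using $d\cdot\eta=\eta$ in $\pi_{n+1}(S^n)=\zz{}$ one sees $G_d$ is well defined and $\phi(G_d)$ is the identity on $\pi_{n+1}(C)$ and multiplication by $d$ on $\pi^{n+1}(C)$. As $u$ runs over representatives of $(\zz{t+1})^{\times}$ and $d$ over representatives of $(\zz{r+1})^{\times}$, these realize all of $\Aut(\zz{t+1})\oplus\Aut(\zz{r+1})$; moreover $F_u\circ G_d$ and $G_d\circ F_u$ agree on every cell, hence are homotopic, so the subgroup generated by the $F_u$ and the $G_d$ maps isomorphically onto the target. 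This proves $\phi$ is onto and simultaneously splits \eqref{ES:ECC}.

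It then remains to compute $\ker\phi$, which is the delicate part. Set $\theta=g_2\circ\gamma\colon C\to C$. From $\gamma\circ g_2=0$ we get $\theta^2=0$, so $\{\mathrm{id}+k\theta:k\in\Z\}$ is a subgroup of $\E(C)$; it lies in $\ker\phi$ because $\theta$ acts as $0$ on both $\pi_{n+1}(C)$ and $\pi^{n+1}(C)$, and since $[g_2]$ has order $2^{t+1}$ and $[\gamma]$ has order $2^{r+1}$, the order of $\theta$ in $[C,C]$ divides $\gcd(2^{t+1},2^{r+1})=2^{\min(r,t)+1}$. That it equals $2^{\min(r,t)+1}$ and that, conversely, every $f\in\ker\phi$ has the form $\mathrm{id}+k\theta$, is exactly where the explicit generating set of $[C,C]$ and the relation formulas of Theorem~\ref{thm:eqs} are used: for $f\in\ker\phi$ one first shows that $f$ is homotopic to the identity on $C^{(n+1)}=\m{r}{n}\vee S^{n+1}$ up to the standard $\eta$-indeterminacy of $\E(\m{r}{n})$ (a self-equivalence of $\m{r}{n}$ that is trivial on homology need not be the identity), that this indeterminacy is absorbed into a multiple of $\theta$ via the identity $i_*\eta=2^t g_2$, and then an obstruction-theoretic comparison over the top cell along $S^{n+1}\xrightarrow{\psi}C^{(n+1)}\to C\to S^{n+2}$ forces $f-\mathrm{id}$ to be a multiple of $\theta$. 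This identifies $\ker\phi\cong\zz{\min(r,t)+1}$. I expect pinning down the exact order of $\theta$ and excluding any further kernel elements to be the main obstacle, as it rests on the fine group structure of $[C,C]$ rather than on homology.

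Finally, for the action in \eqref{ES:ECC}: conjugating $\mathrm{id}+k\theta$ by a self-equivalence $f$ with $\phi(f)=(\phi,\varphi)$ yields $\mathrm{id}+k\,(f\circ g_2)\circ(\gamma\circ f^{-1})$; writing $\phi$ as multiplication by $w$ and $\varphi$ as multiplication by $w'$, one has $f\circ g_2\simeq g_2\circ(w\iota_{n+1})$ and $\gamma\circ f^{-1}\simeq\bigl((w')^{-1}\iota_{n+1}\bigr)\circ\gamma$, so composing the two self-maps of $S^{n+1}$ multiplies $k$ by $w(w')^{-1}$, i.e. multiplies the generator $\theta$ of $\ker\phi\cong\zz{\min(r,t)+1}$ by $p_*(\phi)\,q_*(\varphi^{-1})$, which is precisely the asserted formula, $p$ and $q$ being the canonical projections onto $\zz{\min(r,t)+1}$.
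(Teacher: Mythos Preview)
Your approach is essentially the paper's, recast in cellular language: your maps $F_u,G_d$ turn out, in the coordinates $(x,y,z)$ of Section~\ref{sec:ECC}, to be exactly $(u,0,\tfrac{1-u}{2})$ and $(1,0,\tfrac{d-1}{2})$ (for $t\geq r$; dually for $t<r$), so $F_uG_d$ coincides with the paper's section $\iota(\rho_u,\rho_d)=(x,0,z)$, and your $\theta=g_2\circ\gamma$ is precisely the generator $i_{n+1}q_{n+1}$ of the summand $\zz{\min(r,t)+1}$ which the paper identifies with $\ker\phi$. Your computation of the conjugation action is correct and cleaner than the paper's coordinate check.

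There is, however, one logical gap in the splitting step. From the commutativity $F_uG_d=G_dF_u$ you conclude that the subgroup they generate maps \emph{isomorphically} onto $\Aut(\zz{t+1})\oplus\Aut(\zz{r+1})$, but commutativity yields only that $\phi$ is surjective on that subgroup, not injective. Concretely, you have not shown that $F_u$ depends only on $u\bmod 2^{t+1}$ (equivalently that $F_{1+2^{t+1}}\simeq 1_C$): two cellular self-maps of $C$ that agree on the $(n+1)$-skeleton can still differ by an element of $\pi_{n+2}(C)\cdot q_{n+2}$ in $[C,C]$, and ruling this out is exactly the control over $[C,C]$ that you correctly defer to the $\ker\phi$ step. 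The fix is to reverse the order: first establish $\ker\phi=\{1+k\theta:k\in\zz{\min(r,t)+1}\}$ using the generating set of $[C,C]$ and Lemma~\ref{lem:ECC-eqs} (as the paper does), and then observe that $\langle F_u,G_d\rangle\cap\ker\phi=\{1\}$. For this last intersection you need one more ingredient beyond commutativity, namely that every $F_uG_d$ has $y$-coordinate $0$; this follows from $F_u\circ i_M=i_M$ and $G_d\circ i_{n+1}=i_{n+1}$ together with the formula for $(x,y,z)\circ i_MB(\chi)\tilde\eta$ in Lemma~\ref{lem:ECC-eqs}. With that reordering your argument is complete.
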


Let $\E_\sharp^k(X)$ and $\E_\ast(X)$ be the subgroups of $\E(X)$ consisting of based homotopy equivalences that induce the identity on the first $k$ homotopy groups and all integral homology groups, respectively. The subgroups $\E_\sharp^{k}(X)$ and $\E_\ast(X)$ of Moore spaces $X$ have been computed in several papers, such as \cite{AM98,Jeong10,CL14}.
Given suspended spaces $X,Y$, the subset
\[\mathcal{Z}_\sharp^k(X,Y)\coloneqq\{f\in [X,Y]| f_\sharp=0\colon \pi_i(X)\to \pi_i(Y),~~i\leq k\}\]
 is a subgroup of $[X,Y]$ under addition for each $k\geq 0$.
Combining former work on subgroups of self-homotopy equivalences \cite{ALM01,Pavesic10}, we obtain the following general formulas.
\begin{theorem}\label{thm:LDU}
  Let $X_1,\cdots,X_m$ be $\an$-complexes, $n\geq 4$. For any $l\geq 2$, 
  \[\E_\sharp^{n+l}(X_1\vee \cdots\vee X_m)\cong\bigoplus_{k=1}^m\E_\sharp^{n+l}(X_k)\oplus\bigoplus_{i\neq j=1}^{m,m}\mathcal{Z}_\sharp^{n+l}(X_i,X_j).\]
  The group $\E_\sharp^{n+l}(X_1)\oplus\cdots\oplus\E_\sharp^{n+l}(X_m)$ is isomorphic to the subgroup given by the diagonal matrix 
  \[\mathrm{diag}\{\E_\sharp^{n+l}(X_1),\cdots,\E_\sharp^{n+l}(X_m)\}.\]
  For $1\leq i\neq j\leq m$, $\mathcal{Z}_\sharp^{n+l}(X_i,X_j)$ is isomorphic to the subgroup \[I_m+E_{ji},\] 
  where $I_m=\mathrm{diag}\{1_{X_1},\cdots,1_{X_m}\}$ is the diagonal matrix of identity maps, $E_{ji}$ is the $m\times m$ matrix with $\mathcal{Z}_\sharp^{n+l}(X_i,X_j)$ in the $(j,i)$-entry and zero otherwise. The addition in $\mathcal{Z}_\sharp^{n+l}(X_i,X_j)$ corresponds to the matrix multiplication in $I_m+E_{ji}$.
\end{theorem}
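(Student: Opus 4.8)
The plan is to transport the statement into matrix algebra over the stable homotopy category. Write $X=X_1\vee\cdots\vee X_m$, again an $\an$-polyhedron. Since $n\geq 4$ we have $\dim X_i\leq n+2\leq 2(n-1)\leq 2\conn(X_j)$ for all $i,j$, so by the generalized Freudenthal theorem recalled in Section~\ref{sec:intro} we are in the stable range: each $[X_i,X_j]$ is an abelian group, composition is biadditive, and, finite wedges being finite products stably, $[\,\bigvee_i X_i,\bigvee_j X_j\,]\cong\bigoplus_{i,j}[X_i,X_j]$. Hence $[X,X]$ is a ring; a self-map $f$ is recorded by the matrix $(f_{ji})$ with $f_{ji}=p_j f\iota_i\in[X_i,X_j]$ (where $\iota_i$, $p_j$ are the inclusion of and projection onto the $i$-th, $j$-th wedge summand); composition of self-maps corresponds to matrix multiplication; and $\E(X)$ is the group of units of this matrix ring. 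I would set up this dictionary carefully first — it is exactly what $n\geq 4$ buys, and it is tacit in the matrix language of the statement.

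Next, let $N\subseteq[X,X]$ be the kernel of the ring homomorphism $f\mapsto(f_\sharp)_{i\leq n+l}$ into $\prod_{i\leq n+l}\Hom(\pi_i X,\pi_i X)$; it is a two-sided ideal. From $f_{ji}=p_j f\iota_i$ one reads off that a matrix lies in $N$ iff every one of its entries — the diagonal entries included — lies in the corresponding $\mathcal{Z}_\sharp^{n+l}(X_i,X_j)$, so $N\cong\bigoplus_{i,j}\mathcal{Z}_\sharp^{n+l}(X_i,X_j)$ as abelian groups. The technical core is the claim that $N$ is square-zero, $N\cdot N=0$; equivalently: if $W,W',W''$ are $\an$-polyhedra and $f\colon W\to W'$, $g\colon W'\to W''$ each induce the zero map on $\pi_i$ for $i\leq n+2$, then $g\circ f\simeq\ast$ (this suffices since $l\geq 2$). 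My plan for this is a skeletal descent: $f_\sharp=0$ on $\pi_n(W)$ together with the fact that maps out of the wedge of $n$-spheres $W^{(n)}$ are detected by $\pi_n$ give $f|_{W^{(n)}}\simeq\ast$; running the same argument one dimension up, using $f_\sharp=0$ on $\pi_{n+1}(W)$ and $g_\sharp=0$ on $\pi_{n+1}(W')$, yields $g\circ f|_{W^{(n+1)}}\simeq\ast$, so $g\circ f$ factors through the top-cell pinch $W\to W/W^{(n+1)}\simeq\bigvee S^{n+2}$; finally, the factoring map $\bigvee S^{n+2}\to W''$ is detected by $\pi_{n+2}$, and its components are the $g_\sharp$-images of the $(n+2)$-cell data of $f$ and hence vanish because $g_\sharp=0$ on $\pi_{n+2}(W')$. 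I expect the honest execution of this last obstruction-theoretic step — in the narrow band of cells of dimensions $n,n+1,n+2$ — to be the main obstacle; the cleanest way around it is to reduce to elementary $X_i$ by Chang's theorem and then verify $N\cdot N=0$ as a finite computation using the explicit generators of all the groups $[\cdot,\cdot]$ obtained in Section~\ref{sec:tables}, in the spirit of \cite{ALM01,Pavesic10}.

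Granting that $N$ is square-zero, the conclusion is formal. For $u\in N$ we have $(1+u)(1-u)=1=(1-u)(1+u)$, so $1+N\subseteq\E(X)$; hence $\E_\sharp^{n+l}(X)=\{f\in\E(X):f-1\in N\}=1+N$, and $(1+u)(1+v)=1+u+v$ shows $u\mapsto 1+u$ is an isomorphism $(N,+)\xrightarrow{\ \cong\ }\E_\sharp^{n+l}(X)$; in particular $\E_\sharp^{n+l}(X)$ is abelian. The same applied to each $X_k$ gives $\E_\sharp^{n+l}(X_k)\cong\mathcal{Z}_\sharp^{n+l}(X_k,X_k)$, so, separating the diagonal entries ($i=j$) of $N\cong\bigoplus_{i,j}\mathcal{Z}_\sharp^{n+l}(X_i,X_j)$ from the rest,
\[
\E_\sharp^{n+l}(X)\ \cong\ \bigoplus_{k=1}^{m}\E_\sharp^{n+l}(X_k)\ \oplus\ \bigoplus_{i\neq j=1}^{m,m}\mathcal{Z}_\sharp^{n+l}(X_i,X_j).
\]
In the matrix picture, the first summand is carried by the diagonal matrices $\mathrm{diag}\{f_{11},\dots,f_{mm}\}$ with $f_{kk}\in\E_\sharp^{n+l}(X_k)$, a subgroup isomorphic to $\bigoplus_k\E_\sharp^{n+l}(X_k)$ because diagonal matrices multiply coordinatewise; for $i\neq j$ the summand $\mathcal{Z}_\sharp^{n+l}(X_i,X_j)$ is carried by $\{I_m+uE_{ji}\}$, which is a subgroup because $i\neq j$ forces $E_{ji}^2=0$ and hence $(I_m+uE_{ji})(I_m+vE_{ji})=I_m+(u+v)E_{ji}$, i.e.\ addition there matches matrix multiplication. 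Lastly, square-zeroness of $N$ makes all of these subgroups commute pairwise and generate $1+N=\E_\sharp^{n+l}(X)$, which is exactly the asserted internal direct sum decomposition.
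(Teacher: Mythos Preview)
Your approach is genuinely different from the paper's. The paper proceeds by induction on $m$, writes $\E_\sharp^{n+l}(X\vee Y)=1+\mathcal{Z}_\sharp^{n+l}(X\vee Y,X\vee Y)$, invokes Pave\v{s}i\'c's theorem (Lemma~\ref{lem:LDU}) that $1+I$ admits an LDU-decomposition for any quasi-regular ideal $I$, and then cites \cite[Corollary~3.5]{ALM01} for the abelianness of $\E_\sharp^{n+l}(X\vee Y)$ when $l\geq 2$; abelianness upgrades the LDU factorisation to an internal direct sum. You instead aim for the stronger statement $N\cdot N=0$, which would yield both quasi-regularity and abelianness in one stroke via the isomorphism $(N,+)\xrightarrow{\cong}1+N$, $u\mapsto 1+u$. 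If $N^2=0$ is granted, your deduction of the decomposition is clean and avoids both external citations; that is what your route buys.

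The gap is in your obstruction-theoretic sketch for $N^2=0$. Your skeletal descent correctly gives $f|_{W^{(n)}}\simeq\ast$ (using $f_\sharp=0$ on $\pi_n$) and then $gf|_{W^{(n+1)}}\simeq\ast$ (using $g_\sharp=0$ on $\pi_{n+1}$), so $gf$ factors through $W/W^{(n+1)}\simeq\bigvee S^{n+2}$ via some $h$. But the claim that the components of $h$ are ``the $g_\sharp$-images of the $(n+2)$-cell data of $f$'' is not justified: the nullhomotopy of $gf|_{W^{(n+1)}}$ does not come from a nullhomotopy of $\bar f$ on the $(n+1)$-skeleton of $W/W^{(n)}$ (indeed $\bar f$ there need not be null), so $h$ is not visibly of the form $g\circ(\text{something})$ and $g_\sharp=0$ on $\pi_{n+2}(W')$ cannot be applied. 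As written, your argument only yields $N^3=0$ (feed in a third map $h'\in N$ and use $h'_\sharp=0$ on $\pi_{n+2}$ to kill $h$), and $N^3=0$ alone neither gives abelianness of $1+N$ nor makes $u\mapsto 1+u$ a homomorphism. Your fallback---reduce to elementary summands via Chang's theorem and verify $N\cdot N=0$ from the explicit tables---is feasible and consistent with the computations in Proposition~\ref{prop:ECC:n+2}, but it is a substantial case check rather than something one finds ready-made in \cite{ALM01,Pavesic10}; the paper's route sidesteps this entirely by quoting those two results.
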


As an example, we compute the subgroup $\E_\sharp^{n+2}\big(\bigvee_{i=1}^{m} C^{n+2,t_i}_{r_i}\big)$.
\begin{corollary}\label{cor:Ecc:n+2}
 For any $m\geq 1$, there is an isomorphism 
 \[\E_\sharp^{n+2}\big(\bigvee_{i=1}^{m} C^{n+2,t_i}_{r_i}\big)\cong (\zz{})^{m^2}\oplus \bigoplus_{i,j=1}^{m,m}\zz{\min(r_i,t_j)},\]
 where $(\zz{})^{m^2}$ is the direct sum of $m^2$ copies of $\zz{}$.   
\end{corollary}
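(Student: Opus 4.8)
The plan is to reduce the computation to a single ``block'' at a time via Theorem~\ref{thm:LDU}, and then to read each block off from the group structures of $[X,Y]$ obtained in Section~\ref{sec:tables}. Applying Theorem~\ref{thm:LDU} with $l=2$ to $X=\bigvee_{i=1}^{m}C^{n+2,t_i}_{r_i}$ (all factors are $\an$-polyhedra and $n\geq 4$) gives
\[
  \E_\sharp^{n+2}(X)\cong\bigoplus_{k=1}^{m}\E_\sharp^{n+2}\big(C^{n+2,t_k}_{r_k}\big)\oplus\bigoplus_{i\neq j}\mathcal{Z}_\sharp^{n+2}\big(C^{n+2,t_i}_{r_i},C^{n+2,t_j}_{r_j}\big).
\]
So it suffices to prove the two block identifications: (i) $\E_\sharp^{n+2}(\CC)\cong\zz{}\oplus\zz{\min(r,t)}$ for every pair $(r,t)$; and (ii) $\mathcal{Z}_\sharp^{n+2}(C^{n+2,t_i}_{r_i},C^{n+2,t_j}_{r_j})\cong\zz{}\oplus\zz{\min(r_i,t_j)}$ for $i\neq j$. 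Granting these, the $\zz{}$-summand of each of the $m$ diagonal blocks and the $m^2-m$ off-diagonal blocks assembles into $(\zz{})^{m^2}$, while the cyclic factors $\zz{\min(r_i,t_i)}$ (diagonal) together with $\zz{\min(r_i,t_j)}$, $i\neq j$ (off-diagonal), run exactly once over all pairs $(i,j)$ with $1\leq i,j\leq m$ and assemble into $\bigoplus_{i,j=1}^{m,m}\zz{\min(r_i,t_j)}$, which is the asserted group.

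For (i) the cleanest route is to combine Theorem~\ref{thm:ECC} with explicit generators. From the split sequence $(\ref{ES:ECC})$, an $f\in\E_\sharp^{n+2}(\CC)$ induces the identity on $\pi_{n+1}$, so the first coordinate $\pi_{n+1}(f)$ of $\phi(f)$ is trivial; moreover $f$ induces the identity on $H_n\cong\pi_n\cong\zz{r}$, hence on $H^{n+1}(\CC;\Z)\cong\Ext(\zz{r},\Z)\cong\zz{r}$, and via the canonical surjection $\pi^{n+1}(\CC)\cong\zz{r+1}\twoheadrightarrow H^{n+1}(\CC;\Z)$ this forces the second coordinate $\pi^{n+1}(f)$ into the order-$2$ subgroup of $\Aut(\zz{r+1})$ consisting of automorphisms trivial on that quotient. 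Thus $\phi$ carries $\E_\sharp^{n+2}(\CC)$ into a copy of $\zz{}$, with kernel the subgroup of $\ker\phi\cong\zz{\min(r,t)+1}$ acting trivially on $\pi_{n+2}(\CC)$; using the generators of $[\CC,\CC]$ from Section~\ref{sec:tables} and the known value of $\pi_{n+2}(\CC)$ one shows this kernel is precisely the index-$2$ subgroup $\zz{\min(r,t)}$, and that the $\zz{}$-quotient is realised by an element of order $2$ in $\E_\sharp^{n+2}(\CC)$ (e.g.\ the image under a splitting of $(\ref{ES:ECC})$ of the order-$2$ automorphism of $\pi^{n+1}$), whence $\E_\sharp^{n+2}(\CC)\cong\zz{}\oplus\zz{\min(r,t)}$ rather than $\zz{\min(r,t)+1}$.

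For (ii) I would argue directly with the generators of $[C^{n+2,t_i}_{r_i},C^{n+2,t_j}_{r_j}]$ of Section~\ref{sec:tables} and the relation formulas of Theorem~\ref{thm:eqs}. A class $f$ lies in $\mathcal{Z}_\sharp^{n+2}$ exactly when $f_\sharp=0$ on $\pi_n$, $\pi_{n+1}$ and $\pi_{n+2}$; since $\pi_n(C^{n+2,t_i}_{r_i})\cong\zz{r_i}$ is generated by the bottom-cell inclusion $S^n\hookrightarrow C^{n+2,t_i}_{r_i}$, vanishing on $\pi_n$ forces $f$ to factor through $C^{n+2,t_i}_{r_i}/S^n\simeq S^{n+1}\vee\m{t_i}{n+1}$, after which imposing the remaining vanishing on $\pi_{n+1},\pi_{n+2}$ (read off from the cofiber sequences of Section~\ref{sec:prelim} together with Hilton's homotopy groups of the wedge factors) cuts out a subgroup of $[C^{n+2,t_i}_{r_i},C^{n+2,t_j}_{r_j}]$ generated by two explicit classes: an $\eta$-type composite of order $2$ (invisible to all homotopy groups in degrees $\leq n+2$, but detected on mod-$2$ cohomology) and a ``multiplication'' class of order $2^{\min(r_i,t_j)}$ factoring through the mod-$2^{\min(r_i,t_j)}$ Moore space; the relation formulas then show that the sum of the two generators has order exactly $2^{\min(r_i,t_j)+1}$, giving $\zz{}\oplus\zz{\min(r_i,t_j)}$.

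The main obstacle is (i): the splitting of $(\ref{ES:ECC})$ as abstract groups need not restrict to the subgroups at issue, so one cannot merely extract $\E_\sharp^{n+2}(\CC)$ as a ``kernel times complement''. The two delicate points are (a) proving that exactly the index-$2$ subgroup of $\ker\phi\cong\zz{\min(r,t)+1}$ acts trivially on $\pi_{n+2}(\CC)$, which needs the explicit value of that homotopy group and the action on it of the generators of $[\CC,\CC]$; and (b) showing that the resulting extension $1\to\zz{\min(r,t)}\to\E_\sharp^{n+2}(\CC)\to\zz{}\to1$ splits. Once (i) is established, (ii) and the final assembly are routine manipulations with the tables of Section~\ref{sec:tables}.
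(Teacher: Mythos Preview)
Your reduction via Theorem~\ref{thm:LDU} to diagonal and off-diagonal blocks is exactly what the paper does; the corollary is stated as an immediate consequence of Theorem~\ref{thm:LDU} together with Proposition~\ref{prop:ECC:n+2}. The difference lies entirely in how the individual blocks are computed. The paper treats (i) and (ii) by a uniform direct calculation: write $f\in[C,C']$ in coordinates $(x,y,z)$ with respect to the generating sets of Table~\ref{table2}, use the composition formulas of Lemma~\ref{lem:ECC-eqs} and Theorem~\ref{thm:eqs} to read off $f_\sharp$ on each of $\pi_n,\pi_{n+1},\pi_{n+2}$ explicitly, and solve the resulting congruences on $(x,y,z)$. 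This is purely mechanical and directly exhibits the solution set as $\zz{\min(r,t)}\oplus\zz{}$ (resp.\ $\zz{\min(r,t')}\oplus\zz{}$), so neither of the ``delicate points'' you flag ever arises.

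Your detour through Theorem~\ref{thm:ECC} for (i) is sound in principle but is the harder path: the splitting of (\ref{ES:ECC}) does not automatically restrict to $\E_\sharp^{n+2}$, so you must verify by hand that the image under a section of the order-$2$ automorphism of $\pi^{n+1}$ already lies in $\E_\sharp^{n+2}$ and has order $2$ there, and that check is precisely the coordinate computation the paper does from the start. For (ii), your cofiber-factoring argument can also be made to work, but the sentence ``the sum of the two generators has order exactly $2^{\min(r_i,t_j)+1}$'' is at best ambiguous (as a statement about an element it is false in $\zz{}\oplus\zz{\min(r_i,t_j)}$), and knowing only the order of the subgroup does not separate $\zz{}\oplus\zz{\min(r_i,t_j)}$ from $\zz{\min(r_i,t_j)+1}$; you would still need to check independence of the two generators inside the ambient $[C,C']$, which again comes down to the explicit formulas of Theorem~\ref{thm:eqs}. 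In short, your plan is correct but strictly less efficient than the paper's: both routes ultimately rest on the same coordinate formulas, but the paper applies them once rather than twice.
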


Let $X$ be an $\an$-complexes with $H_{n+2}(X)=0$, then the naturality of Hurewicz homorphisms implies that $\E_\sharp^{n+2}(X)$ is a normal subgroup of $\E_\ast(X)$ (Lemma \ref{lem:Ecc-H}).
In particular, for $X=\bigvee_{i=1}^{m} C^{n+2,t_i}_{r_i}$, we have 
\begin{theorem}\label{thm:Ecc-H}
 For any $m\geq 1$, there is a short exact sequence
  \[\begin{tikzcd}[column sep=scriptsize]
    \E_\sharp^{n+2}\big(\bigvee_{i=1}^{m} C^{n+2,t_i}_{r_i}\big)\ar[r,tail]& \E_\ast\big(\bigvee_{i}^{m} C^{n+2,t_i}_{r_i}\big)\ar[r,two heads]& (\zz{})^{m^2}\oplus (\zz{})^{m^2}
  \end{tikzcd}.\]
\end{theorem}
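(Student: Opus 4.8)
The plan is to use that $H_{n+2}(X)=0$ for $X=\bigvee_{i=1}^{m}C^{n+2,t_i}_{r_i}$, so that Lemma~\ref{lem:Ecc-H} already gives the normality of $\E_\sharp^{n+2}(X)$ in $\E_\ast(X)$; what remains is to identify the cokernel as $(\zz{})^{m^2}\oplus(\zz{})^{m^2}$. Concretely, I will build a homomorphism $\Psi\colon\E_\ast(X)\to(\zz{})^{m^2}\oplus(\zz{})^{m^2}$, show $\ker\Psi=\E_\sharp^{n+2}(X)$, and then verify $\Psi$ is onto. Since $X$ is simply connected and finite, Whitehead's theorem identifies $\E_\ast(X)$ with the set of all homotopy classes of self-maps inducing the identity on $H_\ast(X)$; writing such a class along the wedge as a matrix $(f_{ji})$ with $f_{ji}\in[C^{n+2,t_i}_{r_i},C^{n+2,t_j}_{r_j}]$ (a decomposition valid for $n\geq4$), the homology condition becomes $(f_{ii})_\ast=\mathrm{id}$ and $(f_{ji})_\ast=0$ for $i\neq j$.

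The homomorphism $\Psi$ will record the two pieces of homotopy information an element of $\E_\ast(X)$ can still carry on $\pi_{n+1}(X)$ and $\pi_{n+2}(X)$ beyond what homology forces. Taking $X^{n-1}=\ast$ we have $\Gamma_n(X)=0$ in Whitehead's certain exact sequence, hence $\pi_{n+1}(X)\twoheadrightarrow H_{n+1}(X)$ with kernel $\Gamma_{n+1}(X)=\bigoplus_i\zz{}$, the subgroup generated by the bottom-cell Hopf maps (this is the extra $\zz{}$ in $\pi_{n+1}(C^{n+2,t}_r)\cong\zz{t+1}$ of Theorem~\ref{thm:ECC}). As $\Gamma_{n+1}$ and $H_{n+1}$ are homology functors, any $f\in\E_\ast(X)$ acts as the identity on both, so $\pi_{n+1}(f)-\mathrm{id}$ descends to a well-defined $\delta_1(f)\in\Hom(H_{n+1}(X),\Gamma_{n+1}(X))\cong(\zz{})^{m^2}$; moreover $f\mapsto\delta_1(f)$ is a homomorphism because any two such corrections compose to zero (each has image in $\Gamma_{n+1}(X)$, on which the other vanishes). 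Dually, since $\dim X=n+2$ and $H_{n+2}(X)=H_{n+3}(X)=0$, Whitehead's sequence gives $\pi_{n+2}(X)\cong\Gamma_{n+2}(X)$, which for $n\geq4$ is $\bigoplus_i\Gamma_{n+2}(C^{n+2,t_i}_{r_i})$ (wedge axiom), each summand being an order-four group in a natural extension $\zz{}\rightarrowtail\Gamma_{n+2}(C^{n+2,t}_r)\twoheadrightarrow\zz{}$ whose graded pieces ($H_n\otimes\zz{}$ and $H_{n+1}\otimes\zz{}$) are homology functors — the structure worked out in Section~\ref{sec:proofs-tables}, and the homotopy-group incarnation of the cohomotopy invariant $\pi^{n+1}(C^{n+2,t}_r)\cong\zz{r+1}$ of Theorem~\ref{thm:ECC}. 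The same argument then yields a homomorphism $\delta_2\colon\E_\ast(X)\to(\zz{})^{m^2}$, and I set $\Psi=(\delta_1,\delta_2)$.

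Next I would verify $\ker\Psi=\E_\sharp^{n+2}(X)$. For $n\geq4$ the wedge axiom applies to $\pi_{n+1}$ and $\pi_{n+2}$, so $f$ acts as the identity on both exactly when its matrix does entrywise; together with $\pi_n(X)=H_n(X)$ (Hurewicz) this shows $f\in\E_\sharp^{n+2}(X)$ iff $\pi_{n+1}(f)=\mathrm{id}$ and $\pi_{n+2}(f)=\mathrm{id}$. By construction the deviations $\pi_{n+1}(f)-\mathrm{id}$ and $\pi_{n+2}(f)-\mathrm{id}$ are captured exactly by $\delta_1(f)$ and $\delta_2(f)$ (in each case the deviation lands in a natural $\zz{}$-per-summand subgroup on which $f$ is the identity, so vanishing of the invariant forces vanishing of the deviation), and $\E_\sharp^{n+2}(X)\subseteq\E_\ast(X)$ because $\pi_{n+1}(X)\twoheadrightarrow H_{n+1}(X)$, $\pi_n(X)=H_n(X)$ and $H_{n+2}(X)=0$. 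Hence $\ker\Psi=\E_\sharp^{n+2}(X)$, giving the short exact sequence $\E_\sharp^{n+2}(X)\rightarrowtail\E_\ast(X)\twoheadrightarrow\im\Psi$, pending surjectivity.

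For surjectivity I would realize, for each $(i,j)$, the two generators. When $i=j$ this follows from Theorem~\ref{thm:ECC}: the epimorphism $\phi$, restricted to the homology-trivial automorphisms of $C^{n+2,t_i}_{r_i}$, is onto $\zz{}\oplus\zz{}$, which is precisely the image of $(\delta_1,\delta_2)$ on the $i$-th diagonal block. When $i\neq j$, the generating set of $[C^{n+2,t_i}_{r_i},C^{n+2,t_j}_{r_j}]$ from Section~\ref{sec:tables} contains homology-trivial maps that act nontrivially through the Hopf element on $\pi_{n+1}$, respectively on the top cell, and feeding such a map into the matrix $I_m+E_{ji}$ of Theorem~\ref{thm:LDU} produces the required element of $\E_\ast(X)$. (As a cross-check one may count instead: Theorem~\ref{thm:ECC} gives $|\E_\ast(C^{n+2,t_i}_{r_i})|=2^{\min(r_i,t_i)+3}$ and the tables give $2^{\min(r_i,t_j)+3}$ for the group of homology-trivial maps $C^{n+2,t_i}_{r_i}\to C^{n+2,t_j}_{r_j}$, so $|\E_\ast(X)|=2^{3m^2}\prod_{i,j}2^{\min(r_i,t_j)}$; dividing by $|\E_\sharp^{n+2}(X)|=2^{m^2}\prod_{i,j}2^{\min(r_i,t_j)}$ from Corollary~\ref{cor:Ecc:n+2} forces $|\im\Psi|=2^{2m^2}$.) The hard part will be the per-entry homotopy-theoretic input underlying these last claims — above all the identification of $\pi_{n+2}(C^{n+2,t}_r)\cong\Gamma_{n+2}(C^{n+2,t}_r)$ as a two-step extension carrying exactly a $\zz{}$ of homology-invisible information, and the matching off-diagonal statement. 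This is careful bookkeeping with the small $2$-groups of Sections~\ref{sec:tables}–\ref{sec:proofs-tables} (their extension structure and the naturality of Whitehead's $\Gamma$-sequence), but introduces no new homotopy-theoretic ingredient.
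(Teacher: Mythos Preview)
Your outline is sound and arrives at the theorem, but it follows a genuinely different route from the paper. Both arguments start from Lemma~\ref{lem:Ecc-H} to identify the kernel with $\E_\sharp^{n+2}(X)$ and both ultimately rely on the order count coming from Proposition~\ref{prop:ECC:n+2}(\ref{E3}) together with Corollary~\ref{cor:Ecc:n+2} to pin down the size of the quotient. The difference is in how the quotient is exhibited as $(\zz{})^{m^2}\oplus(\zz{})^{m^2}$. The paper works entirely with the explicit generators and relations from Tables~\ref{table1}--\ref{table2}: it writes $\pi_{n+1}(f_{ij})=\delta_{ij}+2^{t_i}\varepsilon_{ij}$ directly from the coherence of the generators $(i_{n+1})_\ast\sigma_{n+1}$ of $H_{n+1}$ and $i_{n+1}$ of $\pi_{n+1}$, checks the product rule to see $G_1$ is elementary abelian, and for $G_2$ uses the concrete formulas $i_{n+1}q_{n+1}\circ(i_{n+1}\eta)=0$, $i_{n+1}q_{n+1}\circ(i_{M}B(\chi)\tilde{\eta})=i_{n+1}\eta$ to show $\pi_{n+2}(f_{ij})$ is unipotent upper-triangular with a single $\zz{}$ off-diagonal entry. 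Your approach replaces these hands-on computations by the observation that $\pi_{n+1}$ and $\pi_{n+2}$ carry natural two-step filtrations whose graded pieces are homology functors, so an element of $\E_\ast(X)$ is forced to act unipotently and the deviation defines a homomorphism into a $\Hom$-group. This is cleaner conceptually and would generalise more readily, whereas the paper's computation has the virtue of being self-contained within the generator calculus already developed.

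One point to tighten: your identification of the graded pieces of $\pi_{n+2}(C)$ as ``$H_n\otimes\zz{}$ and $H_{n+1}\otimes\zz{}$'' is not literally the Atiyah--Hirzebruch filtration. In the AHSS for a single $C=\CC$ one has $E_2^{n+1,1}=H_{n+1}(C;\zz{})\cong(\zz{})^2$ (there is a $\mathrm{Tor}(H_n,\zz{})$ contribution) and $E_2^{n,2}=H_n(C;\zz{})\cong\zz{}$, and the $d_2$ differential into the latter is governed by $\sq^2$, which by Lemma~\ref{lem:Sq2} is an isomorphism; so $E_\infty^{n,2}=0$ and the whole of $\pi_{n+2}(C)$ sits in filtration $n+1$. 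A filtration that does what you want is the one the paper implicitly uses: the subgroup generated by $i_{n+1}\eta$, i.e.\ the image of $\eta$-multiplication $\pi_{n+1}(C)\to\pi_{n+2}(C)$, with quotient generated by $i_{M}B(\chi)\tilde{\eta}$. That $\E_\ast$ acts as the identity on both pieces is exactly what the paper's matrix computation verifies; if you want to avoid that computation you still need an argument linking the quotient to $H_n$ naturally (for the subgroup, the link to $H_{n+1}$ goes through the surjection $\pi_{n+1}\twoheadrightarrow H_{n+1}$ and $t_i\geq 1$, so $2^{t_i}\eta=0$). Your counting cross-check is correct and is in fact also the mechanism by which the paper concludes $G_1\cong G_2\cong(\zz{})^{m^2}$ rather than merely subgroups thereof.
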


The paper is organized as follows. In Section \ref{sec:prelim} we introduce the global conventions and notation in this paper and list homotopy cofibre sequences for indecomposable $\an$-complexes. In Section \ref{sec:tables} we summarize the explicit generators of the groups $[X,Y]$ by two tables (Tables \ref{table1}, \ref{table2}) and a theorem of relation formulas (Theorem \ref{thm:eqs}), where $X,Y$ indecomposable $\an$-complexes, $n\geq 3$. Section \ref{sec:proofs-tables} is devoted to the proofs of results in Section \ref{sec:tables} and Section \ref{sec:htpEq} covers the proofs of the above theorems and corollaries.

\subsection*{Acknowledgement}
The author would like to thank the editor and referee(s) sincerely for their valuable feedback on the earlier versions of the paper. The author was supported by the National Natural Science Foundation of China (Grant no. 12101290) and China Postdoctoral Science Foundation (Grant no. 2021M691441).

\section{Preliminaries}\label{sec:prelim}
This section covers global conventions and notation adopted in this paper and reviews the useful homotopy cofibre sequences for indecomposable $\an$-complexes, $n\geq 3$. 

\subsection{Conventions and notation}\label{sec:notations}
Throughout the paper we assume that all spaces are pointed finite CW-complexes and that all maps between spaces are base-point-preserving ones; we don't distinguish a map from its homotopy class in notation.
Given maps $f\colon X\to Y,g\colon Y\to Z$, denote by $g\circ f$ or $gf$ the composite map $X\to Z$.
Given a map $f\colon X_1\vee\cdots\vee X_k\to Y_1\vee \cdots\vee Y_l$, where $X_1,\cdots,X_k$ and $Y_1,\cdots,Y_l$ be $\an$-complexes, $n\geq 4$, we usually represent $f$ in the matrix form 
\[f=(f_{ij})_{l\times k}=\begin{pmatrix}
 f_{11}&\cdots&f_{1k}\\
 \vdots&\ddots&\vdots\\
 f_{l1}&\cdots&f_{lk}
\end{pmatrix},\]
where $f_{ij}=p_{Y_i}\circ f\circ i_{X_j}$, $i_{X_j}\colon X_j\to X_1\vee \cdots \vee X_k$ and $p_{Y_i}\colon Y_1\vee \cdots \vee Y_l\to Y_i$ are respectively the canonical inclusion and projection maps, $1\leq i\leq l, 1\leq j\leq k$. 
Given an abelian group $A$, denote by $H_n(X;A)$ (resp. $H^n(X;A)$) the $n$-th reduced homology (resp. cohomology) group of $X$ with coefficients in $A$; if $A=\Z$, write $H_n(X)=H_n(X;\Z)$ and $H^n(X)=H^n(X;\Z)$. 
Denote by $A\langle a_1,\cdots, a_n \rangle$ to indicate that the group $A$ is generated by elements $a_1,\cdots, a_n$. For convenient, we use the arrows $``\rightarrowtail"$ and $``\twoheadrightarrow"$ to denote monomorphisms and epimorphisms of groups,  respectively. 
Given $x\in\zz{k},b\in\zz{l}$, $k,l\geq 1$, the sum ``$a+b$" means the sum $p(a)+q(b)$ in some group $\zz{c}$, where $c\leq \min(k,l)$, and $p\colon \zz{k}\to\zz{c}$ and $q\colon \zz{l}\to\zz{c}$ are the canonical projections. 

Unless otherwise stated, we are working in the \emph{stable range} $n\geq 4$. 
Let spaces $X,Y,\cdots$ be indecomposable $\an$-complexes below.
For simplicity, we shall frequently use the notations when they cause no confusion. Denote 
\[M=\mm{r}{n+j}(j=0,1),\quad \uc=C^{n+2}_r,\quad \oc=C^{n+2,t},\quad C=C^{n+2,t}_r\] for different upper and lower indices $r,t\geq 1$. Denote by $1_X$ the identity map of $X$ and denote $k\wedge 1_X\colon S^1\wedge X\to S^1\wedge X$ by $k\cdot 1_X$; in particular, we denote 
\begin{align*}
  &1_n=1_{S^n},\quad 1_M=1_{\mm{r}{n+j}}~(j=0,1), \quad 1_{\eta}=1_{\ceta},\\
&1_{\uc}=1_{C^{n+2}_r},\quad 1_{\oc}=1_{C^{n+2,t}},\quad 1_{C}=1_{\CC}.
\end{align*}
To emphasize the domains for inclusions and the codomains for pinch maps,  we use the same notations $i_A$ and $q_B$ respectively to denote the canonical inclusion $A\to X$ and the canonical pinch map $Y\to B$ for different indecomposable $\an$-complexes $A,B$. Concretely, we agree once for all that 
\begin{enumerate}[i)]
  \item  $i_{n+k}$ and $q_{n+k}$ denote the canonical inclusions: $S^{n+k}\to X$ and the canonical pinch maps: $Y\to S^{n+k}$, respectively. For example, we denote by $i_n\colon S^n\to X$  the canonical inclusions for different spaces  $X=\m{r}{n},\ceta, C^{n+2}_r,C^{n+2,t},\CC$.
  \item $i_M$ and $q_M$ denote the canonical inclusions $\m{r}{n}\to X$ and pinch maps $Y\to \m{t}{n+1}$, respectively. Possible candidates are $X=C^{n+2}_r,\CC;Y=C^{n+2,t},\CC$.
  \item  $i_\eta\colon \ceta\to C^{n+2}_r,i_{\oc}\colon C^{n+2,t}\to \CC$ denote the canonical inclusions, and $q_\eta\colon C^{n+2,t}\to \ceta, q_{\uc}\colon \CC\to C^{n+2}_r$ denote the canonical pinch maps that collapse the subspaces $S^{n+1}$.
  \item if there are simultaneously indecomposable $\an$-complexes $X$ and $X'$ of the same type but with different power indices,  we use the prime symbol $'$ in the above notation of spaces to indicate the different maps; for example, we denote by $i_{M}\colon \m{r}{n}\to C^{n+2}_r$ and $i_{M'}\colon \m{r'}{n}\to C^{n+2}_{r'}$ for the canonical inclusions when $r\neq r'$.  
\end{enumerate}

\subsection{Homotopy cofibre sequences for indecomposable $\an$-complexes}\label{sec:Cofs} The following homotopy cofibre sequences, due to Zhu and Pan \cite{ZP17},  will be frequently used in Section \ref{sec:proofs-tables}. 
  \begin{enumerate}[(a)]
    \item\label{Cof-Moore} Homotopy cofibre sequence for $\mm{r}{n}$: $S^n\xra{p^r}S^n\xra{i_n}\mm{r}{n}\xra{q_{n+1}}S^{n+1}$.
  
    \item\label{Cof-eta}  Homotopy cofibre sequence for $C^{n+2}_{\eta}$: $S^{n+1}\xra{\eta}S^{n}\xra{i_n} C_{\eta}^{n+2}\xra{q_{n+2}} S^{n+2}$.

    \item\label{Cof-uc}  Homotopy cofibre sequences for $C^{n+2}_{r}$:
       \begin{description}
         \item [\textbf{Cof1}] $S^{n}\vee S^{n+1}\xra{(2^r, \eta)}S^{n}\xra{i_n}C^{n+2}_{r}\xra{q_S}S^{n+1}\vee S^{n+2}$;
         \item [\textbf{Cof2}] $S^{n+1}\xra{i_n\eta}\m{r}{n}\xra{i_{M}}C^{n+2}_{r}\xra{q_{n+2}}S^{n+2}$;
         \item [\textbf{Cof3}] $S^{n}\xra{i_n2^r}C_{\eta}^{n+2}\xra{i_{\eta}}C^{n+2}_{r}\xra{q_{n+1}}S^{n+1}$;
       \end{description}

    \item\label{Cof-oc}  Homotopy cofibre sequences for $C^{n+2,t}$:
        \begin{description}
         \item [\textbf{Cof1}] $S^{n+1}\xra{\binom{\eta}{2^t}}S^{n}\vee S^{n+1}\xra{(i_n,i_{n+1})}C^{n+2, t}\xra{q_{n+2}} S^{n+2}$;
         \item [\textbf{Cof2}] $\m{t}{n}\xra{\eta q_{n+1}}S^{n}\xra{i_n}C^{n+2,t}\xra{q_{M}}\m{t}{n+1}$;
         \item [\textbf{Cof3}] $C_{\eta}^{n+1}\xra{2^tq_{n+1}}S^{n+1}\xra{i_{n+1}}C^{n+2, t}\xra{q_{\eta}}C_{\eta}^{n+2}$;
       \end{description}

    \item\label{Cof-cc}  Homotopy cofibre sequences for $ C_{r}^{n+2,t}$:
         \begin{description}
         \item [\textbf{Cof1}] $S\xra{\ma{r}{t}}S\xra{i_S}C^{n+2,t}_{r}\xra{q_{\Sigma S}}\Sigma S$, where $S=S^{n}\vee S^{n+1}$;
         \item [\textbf{Cof2}] $\m{t}{n}\xra{i_n\eta q_{n+1}}\m{r}{n}\xra{i_{M}}C^{n+2,t}_{r}\xra{q_{M}}\m{t}{n+1}$;
         \item [\textbf{Cof3}] $S^{n}\vee \m{t}{n}\xra{(2^r, \eta q_{n+1})}S^{n}\xra{i_n}C^{n+2,t}_{r}\xra{\binom{q_{n+1}}{q_M}}S^{n+1}\vee\m{t}{n+1}$;
         \item [\textbf{Cof4}] $S^{n+1}\xra{\binom{i_n\eta}{2^t}}\m{r}{n}\vee S^{n+1}\xra{(i_M,i_n)}C^{n+2,t}_{r}\xra{q_{n+2}}S^{n+2}$;
         \item [\textbf{Cof5}] $C_{r}^{n+1}\xra{2^tq_{n+1}}S^{n+1}\xra{i_{n+1}}C^{n+2,t}_{r}\xra{q_{\uc}}C_{r}^{n+2}$;
         \item [\textbf{Cof6}] $S^{n}\xra{i_n2^r}C^{n+2,t}\xra{i_{\oc}}C^{n+2,t}_{r}\xra{q_{n+1}}S^{n+1}$.

       \end{description}
  \end{enumerate}

The following lemma is trivial from the homotopy cofibre sequences above.
\begin{lemma}\label{lem:cofib}
Let $n\geq 3$ and $k\in \{0,1\}$.
 \begin{enumerate}
   \item For the canonical inclusions $i_{n+k}$ from $S^{n+k}$ into $C^{n+2,t}$ or $\CC$, there holds 
\[i_n\eta =2^t\cdot i_{n+1}.\]
   \item For the canonical pinch maps $q_{n+k}$ from  $C^{n+2}_r$ or $\CC$ onto $S^{n+k}$, there holds
   \[\eta q_{n+2} =2^r\cdot q_{n+1}.\]
 \end{enumerate}
\end{lemma}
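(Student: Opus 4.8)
The plan is to read both relations off the cofiber sequences collected in Section~\ref{sec:Cofs}, which is exactly why the preceding remark calls the lemma trivial. The only input I would use is that in a cofiber sequence $A\xra{f}B\xra{i}C_f\xra{\partial}\Sigma A$ in the stable range, any two consecutive maps compose to a null-homotopic map, together with the bilinearity of composition on stable homotopy sets, so that a composite of a matrix (row or column) of maps with an attaching map expands term by term.

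For part~(1), I would first take $X=C^{n+2,t}$. The attaching map of its top cell is $\binom{\eta}{2^t}\colon S^{n+1}\to S^n\vee S^{n+1}$ and $(i_n,i_{n+1})\colon S^n\vee S^{n+1}\to C^{n+2,t}$ is the canonical map onto the cofiber, so by \textbf{Cof1} for $C^{n+2,t}$ the composite $(i_n,i_{n+1})\circ\binom{\eta}{2^t}$ is null. Expanding gives $i_n\eta+2^t\cdot i_{n+1}=0$ in $\pi_{n+1}(C^{n+2,t})$, i.e.\ $i_n\eta=-2^t\cdot i_{n+1}$. To reach the stated form I would use that $\eta$ has order $2$, so $2\cdot(i_n\eta)=i_n\circ(2\eta)=0$; together with the previous identity this forces $2^{t+1}\cdot i_{n+1}=0$, hence $-2^t\cdot i_{n+1}=2^t\cdot i_{n+1}$ and therefore $i_n\eta=2^t\cdot i_{n+1}$. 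The case $X=\CC$ is entirely analogous using \textbf{Cof4}; alternatively one applies the inclusion $i_{\oc}\colon C^{n+2,t}\to\CC$, which carries $i_n\mapsto i_n$ and $i_{n+1}\mapsto i_{n+1}$, to the identity already established in $C^{n+2,t}$.

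Part~(2) is the dual statement, proved by the same recipe on the cohomotopy side. For $Y=C^{n+2}_r$ I would extend \textbf{Cof1} one step in its Puppe sequence to $C^{n+2}_r\xra{q_S}S^{n+1}\vee S^{n+2}\longrightarrow S^{n+1}$, where the last map is (a sign times) the suspension of the attaching map $(2^r,\eta)$ and $q_S=\binom{q_{n+1}}{q_{n+2}}$; the vanishing of this composite expands to $2^r\cdot q_{n+1}+\eta q_{n+2}=0$ in $[C^{n+2}_r,S^{n+1}]$. As before, $2\eta=0$ yields $2\cdot(\eta q_{n+2})=0$, hence $2^{r+1}\cdot q_{n+1}=0$, so $-2^r\cdot q_{n+1}=2^r\cdot q_{n+1}$ and $\eta q_{n+2}=2^r\cdot q_{n+1}$. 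The case $Y=\CC$ follows identically from \textbf{Cof3}, or by composing the identity in $C^{n+2}_r$ with the projection $q_{\uc}\colon\CC\to C^{n+2}_r$, which sends $q_{n+1}\mapsto q_{n+1}$ and $q_{n+2}\mapsto q_{n+2}$.

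I do not expect any genuine obstacle. The only points needing a moment of care are correctly identifying, in each Puppe sequence, which consecutive pair of maps gives the relevant null composite, and the trivial sign bookkeeping turning $-2^t$ into $2^t$ (resp.\ $-2^r$ into $2^r$), which is settled by $2\eta=0$. This is also compatible with the sign/projection conventions fixed in Section~\ref{sec:notations}.
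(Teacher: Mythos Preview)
Your proposal is correct and follows exactly the route the paper intends: the paper gives no proof beyond the remark that the lemma is ``trivial from the cofiber sequences above,'' and your argument is precisely the expected unpacking of that remark---read off the null-composite in \textbf{Cof1} (resp.\ its Puppe extension) and fix the sign using $2\eta=0$. The alternative passage to $\CC$ via $i_{\oc}$ and $q_{\uc}$ is a nice shortcut and entirely in the paper's spirit.
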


\section{Groups $[X,Y]$ and their explicit generators}\label{sec:tables}
Recall that  there is a contravariant functor $D=D_{2n+2}$, called the \emph{\SW duality functor} (cf. \cite{Bauesbook}), defined on the stable homotopy category of $\an$-complexes. In particular, the functor $D$ induces an isomorphism $[X,Y]\cong [DY,DX]$.
For indecomposable $\an$-complexes, we have
\begin{align*}
&DS^{n+i}=S^{n+2-i}, ~i=0,1,2;\quad D\mm{r}{n}=\mm{r}{n+1};\\
& D\ceta=\ceta,~,DC^{n+2,t}=C^{n+2}_t,~D\CC=C^{n+2,r}_t,~r,t\geq 1.
\end{align*}
 It is also well-known that the canonical inclusion $i_{n+k}\colon S^{n+k}\to X$ is \SW $(2n+2)$-dual to the canonical pinch maps $q_{n+2-k}\colon DX\to S^{n+2-k}$ for indecomposable $\an$-complexes $X$, $n\geq 4$ and $k=0,1,2$. The \SW duality will effectively reduce the computation process to the homotopy classification of based maps between $\an$-complexes.

By \cite{BH91},  there are maps $\tilde{\eta}\in [S^{n+2},\m{}{n}]$ and $\bar{\eta}\in [\m{}{n+1},S^n]$ satisfying the formulas:
\begin{equation}\label{eq:eta}
  \begin{aligned}
 q_{n+1}\tilde{\eta}=\eta=\bar{\eta}i_{n+1};\quad 
 2\bar{\eta}=\eta^2 q_{n+2},\quad 2\tilde{\eta}=i_n\eta^2.
\end{aligned}
\end{equation}
By \citep[Theorem 1.4.4]{Bauesbook}, for each prime $p$ and positive integers $r,t$, there is a map 
\begin{equation*}
  B(\chi^r_t)=B_n(\chi^r_t)\colon \mm{r}{n}\to \mm{t}{n}
\end{equation*}
characterized by the following two properties: 
\begin{enumerate}[(i)]
  \item $H_n(B(\chi^r_t))=\chi^r_t\colon \Z/{p^r}\to\Z/p^{t}$ satisfying $\chi^r_t(1)=1$ if $r\geq t$, otherwise $\chi^r_t(1)=p^{t-r}$;
  \item $B_n(\chi^r_t)=\Sigma B_{n-1}(\chi^r_t)$ under the suspension, $n\geq 2$.
\end{enumerate} 
Consequently, $B(\chi^r_t)$ satisfies the formulas:
\begin{equation}\label{eq:B}
  \begin{aligned}
    r\geq t:~~B(\chi^r_t)i_n=i_n,\quad q_{n+1}B(\chi^r_t)=p^{r-t} q_{n+1};\\
    r\leq t:~~B(\chi^r_t)i_n=p^{t-r}i_n,\quad q_{n+1}B(\chi^r_t)= q_{n+1}. 
  \end{aligned} 
\end{equation}

 In addition to $B(\chi^r_t)$, there are other newly defined generators, which can be characterized by relation formulas in the following theorem.
\begin{theorem}\label{thm:eqs}
Let $n\geq 3$ and $r,t\geq 1$ be integers.
  \begin{enumerate}[1.] 
   \item\label{eq:zeta} $\tilde{\zeta}\in [S^{n+2},\ceta]$ and  $\bar{\zeta}\in [\ceta,S^n]$ satisfy the formulas:
    \begin{equation*}
      \begin{split}
        q_{n+2}\tilde{\zeta}=2\cdot 1_{n+2},&\quad \bar{\zeta}i_n=2\cdot 1_n,\\
        \tilde{\zeta}q_{n+2}+i_n\bar{\zeta}&=2\cdot 1_\eta.
      \end{split}
    \end{equation*}
 
    \item\label{eq:xi} $\tilde{\xi}_t\in [\m{t+1}{n},C^{n+2,t}]$ and $\bar{\xi}_r\in [C^{n+2}_r,\m{r+1}{n}]$ satisfy the formulas:
    \begin{align*}
      \tilde{\xi}_ti_{n+1}&=i_{n+1},\quad q_\eta \tilde{\xi}_t=\tilde{\zeta}q_{n+2},\quad q_{M}\tilde{\xi}_t=B(\chi^{t+1}_t),\quad q_{n+2}\tilde{\xi}_t=2q_{n+2};\\
    q_{n+1}\bar{\xi}_r&=q_{n+1},\quad  \bar{\xi}_ri_\eta=i_n\bar{\zeta},\quad \bar{\xi}_ri_M=B(\chi^{r}_{r+1}),\quad \bar{\xi}_ri_n=2i_n.
    \end{align*}

    \item\label{eq:vartheta} If $t\leq t'$, $\tilde{\vartheta}^{t}_{t'}\in [C^{n+2,t},C^{n+2,t'}]$ satisfies $\tilde{\vartheta}^t_t=1_{\oc}$ and the formulas:
    \begin{align*}
      \tilde{\vartheta}^{t}_{t'}i_n=i_n,\quad q_{M'}\tilde{\vartheta^{t}_{t'}}=B(\chi^{t}_{t'})q_M, \quad \tilde{\vartheta}^{t}_{t'}i_{n+1}=2^{t'-t}i_{n+1}, \quad q_\eta \tilde{\vartheta}^{t}_{t'}=q_\eta.
    \end{align*}
    If $r\geq r'$, $\bar{\vartheta}^r_{r'}\in [C^{n+2}_r,C^{n+2}_{r'}]$ satisfies $\bar{\vartheta}^r_{r}=1_{\uc}$ and the formulas:
    \begin{align*}
      q_{n+2}\bar{\vartheta}^r_{r'}=q_{n+2},\quad  \bar{\vartheta}^r_{r'}i_M=i_{M'}B(\chi^r_{r'}),\quad q_{n+1}\bar{\vartheta}^r_{r'}=2^{r-r'}q_{n+1}, \quad \bar{\vartheta}^r_{r'}i_\eta=i_\eta.
    \end{align*} 

    \item\label{eq:CC}
 If $t'\geq t, r\geq r'$, $L(\chi)\in [\CC, C^{n+2,t'}_{r'}]$ satisfies $L(\chi)=1_{C}$ for $t'=t,r'=r$ and the following formulas:
    \begin{align*}
      L(\chi) i_{\oc}=i_{\oc'}\tilde{\vartheta}^{t}_{t'},&\quad q_{\uc'}L(\chi)=\bar{\vartheta}^r_{r'}q_{\uc};\\
       L(\chi)i_M=i_{M'}B(\chi^r_{r'}),&\quad q_{M'}L(\chi)=B(\chi^{t}_{t'})q_M.
    \end{align*}
    \end{enumerate}
\end{theorem}

In terms of the above conventions and notation,
we summarize the group structure of $[X,Y]$ of indecomposable $\an$-complexes $X,Y$ with $n\geq 3$ and their explicit generators in Tables \ref{table1}, \ref{table2}. The groups $[X,Y]$ in which $X$ or $Y$ is a mod $p^r$ Moore space with $p$ an odd prime can be easily computed. We omit the discussion of these groups in this paper.

  \begin{amssidewaystable}
    \centering
    \caption{Maps between indecomposable $\an$-complexes, I}\label{table1}
   
\begin{tabular}{c| c| c| c|c|c}
\hline
  &$S^{n}$& $S^{n+1}$& $S^{n+2}$ & $M_{2^r}^n $ & $M_{2^t}^{n+1}$
\\
\hline
$S^{n}$&$\Z~ 1_n$&$\Z/2~\eta$&$\Z/2 ~\eta^2$&$\Z/2~\eta q_{n+1}$
&\tabincell{l}{$t=1: \Z/4~ \bar{\eta}$\\$t>1:\Z/2\oplus \zz{}~\bar{\eta}B(\chi^t_1),~\eta^2q_{n+2}$}
\\
\hline
$S^{n+1}$&$0$&$\Z~ 1_{n+1}$&$\Z/2~\eta $&$\Z/{2^r}~ q_{n+1}$&$\Z/2 ~\eta q_{n+2}$ \\
\hline
$S^{n+2}$&$0$&$0$&$\Z~1_{n+2}$&$0$&$\zz{t} ~q_{n+2}$\\
\hline
$M_{2^{r'}}^n $ &$\Z/{2^{r'}} ~i_n$&$\Z/2~ i_n\eta$
 & \tabincell{l}{$r'=1:\Z/4 ~\tilde{\eta}$\\[1ex]$r'>1:\Z/2\oplus \Z/2$\\\hspace*{1cm}$B(\chi^1_{r'})\tilde{\eta},~ i_n\eta^2$} &
 \tabincell{l}{$r=r'=1:\Z/4~1_M$\\[1ex] otherwise:\\ \hspace*{1cm}$\Z/2^l \oplus \Z/2$\\ \hspace*{1cm}$B(\chi^{r}_{r'}),~ i_n\eta q_{n+1}$} &
 \tabincell{l}{$t=1=r':\Z/2\oplus\Z/2$\quad $ i_n\bar{\eta}, ~\tilde{\eta}q_{n+2}$\\[1ex]
 $t>1=r':\Z/2 \oplus\Z/4$\quad $i_n\bar{\eta}B(\chi^t_1),~\tilde{\eta}q_{n+2}$\\[1ex]
 $t=1<r': \Z/4\oplus\Z/2$\quad $i_n\bar{\eta}, ~B(\chi^t_1)\tilde{\eta}q_{n+2}$\\[1ex]
 $t>1<r':\Z/2\oplus\Z/2\oplus\Z/2$\\\hspace*{1cm}$ i_n\bar{\eta}B(\chi^t_1), ~B(\chi^t_1)\tilde{\eta}q_{n+2},~i_n\eta^2 q_{n+2}$
 }\\
\hline
$M_{2^{t'}}^{n+1}$&$0$&$\Z/2^{t'}~i_{n+1}$&$\Z/2~i_{n+1}\eta$&$\Z/2^m~i_{n+1}q_{n+1}$&
 \tabincell{l}{$t=t'=1$: $\Z/4~1_M$\\otherwise:
 $\Z/2^n\oplus \Z/2$\\\hspace*{1.5cm}$B(\chi^t_{t'}),~i_{n+1}\eta q_{n+2}$}\\
 \hline\hline
 $C^{n+2}_{\eta}$ & $\Z~i_n$ & $0$ & $\Z~\tilde{\zeta}$ &$0$  &  $\zz{t}~\tilde{\zeta}q_{n+2} $ \\
\hline
$C^{n+2, t'}$& $\Z~i_n$ & $\Z/{2^{t'+1}}~i_{n+1}$ &$\Z/{2}~i_{n+1}\eta$ &$\Z/{2^{m^{''}}}~i_{n+1}q_{n+1}$ & \tabincell{c}{$\Z/{2^{n^{''}}}\oplus \Z/2$\\\hspace*{1cm}$\tilde{\xi}_{t'} B(\chi^{t}_{t'+1}),~i_{n+1}\eta q_{n+2}$}\\
\hline
$C^{n+2}_{r'}$&$\Z/{2^{r'}}~i_n$&$0$ &\tabincell{c}{$\Z\oplus \Z/{2}$\\\hspace*{1cm}$i_{\eta}\tilde{\zeta},~i_{M'}B(\chi^1_{r'})\tilde{\eta}$}  &$\Z/{2^l}~i_{M'}B(\chi^{r}_{r'})$&\tabincell{c}{ $\zz{t}\oplus \Z/2$\\\hspace*{1cm}$i_{\eta}\tilde{\zeta}q_{n+2},~i_{M'}B(\chi^1_{r'})\tilde{\eta}q_{n+2}$}\\
\hline
$C^{n+2, t'}_{r'}$&$\Z/{2^{r'}}~i_n$ &$\Z/{2^{t'+1}}~i_{n+1}$ & \tabincell{c}{$\Z/{2}\oplus  \Z/{2}$\\\hspace*{1cm}$i_{n+1}\eta,~i_{M'}B(\chi^1_{r'})\tilde{\eta}$}&\tabincell{c}{$\Z/{2^l}\oplus \Z/{2^{m^{''}}}$\\\hspace*{1cm}$i_{M'}B(\chi^r_{r'}),~i_{n+1}q_{n+1}$ }&
\tabincell{c}{ $\Z/{2^{n^{''}}}\oplus \Z/2\oplus \Z/2$\\ \hspace*{1cm}$i_{\oc'}\tilde{\xi}_{t'} B(\chi^t_{t'+1}), ~i_{M'}B(\chi^1_{r'})\tilde{\eta}q_{n+2}$,\\\hspace*{2.5cm}~$i_{n+1}\eta q_{n+2}  $ }\\
\hline
\end{tabular}
\\[1ex]
$j=\max(t,r'),~k=\min(t,r');\hspace{3mm}
l=\min(r,r'),~ l'=\min(r+1,r'); ~~$\\
$m=\min(r,t'), ~m'=\min(r+1,t'),m^{''}=\min(r,t'+1);\hspace{3mm}n=\min(t,t'),~ n^{''}=\min(t,t'+1)
$

\end{amssidewaystable}

   \begin{amssidewaystable}

 \centering
 \caption{Maps between indecomposable $\an$-complexes, II}\label{table2}

\begin{tabular}{c| c| c| c|c}
 \hline 
  &$\ceta$& $C^{n+2,t}$& $C^{n+2}_r$& $C^{n+2,t}_r$ \\
\hline
$S^{n}$&$\Z~\bar{\zeta}$&$\Z~\bar{\zeta}q_\eta~\oplus\Z/2~\bar{\eta}B(\chi^t_1)q_M$&$\Z/2~ \eta q_{n+1}$&$\Z/2\oplus \Z/2~\bar{\eta}B(\chi^t_1)q_M,~\eta q_{n+1}$\\
\hline
$S^{n+1}$&$0$&$0$&$\zz{r+1}~q_{n+1} $&$\zz{r+1}~q_{n+1} $ \\
\hline
$S^{n+2}$&$\Z~ q_{n+2}$&$\zz{t}~ q_{n+2}$&$\Z~q_{n+2}$&$\zz{t}~q_{n+2}$\\
\hline
$M_{2^{r'}}^n $ &\tabincell{c}{$\zz{r'}$\\$i_n\bar{\zeta}$}&\tabincell{c}{$\Z/{2^{r'}}\oplus \Z/2$\\ $i_n\bar{\zeta}q_\eta,~i_n\bar{\eta}B(\chi^t_1)q_M$}&\tabincell{c}{$\zz{l'}\oplus\Z/2$\\ $B(\chi^{r+1}_{r'})\bar{\xi}_r, ~ i_n\eta q_{n+1}$}
 & \tabincell{l}{$\zz{l'}\oplus \Z/2\oplus\Z/2$\\$B(\chi^{r+1}_{r'})\bar{\xi}_r q_{\uc},~i_n\bar{\eta}B(\chi^t_1)q_M,~i_n\eta q_{n+1}$} \\
 \hline
 $M_{2^{t'}}^{n+1}$&$0$&\tabincell{c}{$\Z/2^{n}$\\$B(\chi^t_{t'})q_M$}&\tabincell{c}{$\zz{m'}$\\$i_{n+1}q_{n+1}$}&\tabincell{c}{$\Z/2^{m'}\oplus\zz{n}$\\$i_{n+1}q_{n+1},~B(\chi^t_{t'})q_M$}
 \\ \hline\hline
 $C^{n+2}_{\eta}$ & \tabincell{l}{$\Z\oplus\Z$\\ $1_{\eta},~i_n\bar{\zeta}$} & \tabincell{c}{$\Z\oplus\zz{t}$\\$q_\eta ~ \tilde{\zeta}q_{n+2}$} & \tabincell{c}{$\Z$\\$\tilde{\zeta}q_{n+2}$}   & \tabincell{c}{ $\zz{t}$\\$\tilde{\zeta}q_{n+2} $} \\
\hline
$C^{n+2, t'}$&\tabincell{c}{$\Z$\\$i_n\bar{\zeta}$}& \tabincell{l}{$t>t':~\Z\oplus\zz{t'+1}$\\\hspace*{0.7cm}$i_n\bar{\zeta}q_\eta, ~\tilde{\xi}_{t'} B(\chi^t_{t'+1})q_M$ \\[1ex]$t\leq t':~\Z\oplus\zz{t}$\\\hspace*{1cm} $~\tilde{\vartheta}^t_{t'}, ~\tilde{\xi}_{t'} B(\chi^t_{t'+1})q_M$}&\tabincell{c}{$\zz{m+1}$\\$i_{n+1}q_{n+1}$ }
&\tabincell{l}{$\Z/{2^{n^{''}}}\oplus\zz{m+1}$\\$\tilde{\xi}_{t'} B(\chi^t_{t'+1})q_Mq_M,~i_{n+1}q_{n+1}$}\\
\hline
$C^{n+2}_{r'}$&\tabincell{l}{$\Z\oplus\Z/{2^{r'}}$\\$i_\eta,~i_n\bar{\zeta}$}&
\tabincell{l}{$t\geq r':~\zz{t+1}\oplus\zz{r'}$\\\hspace*{1cm}$i_\eta q_\eta, ~i_n\bar{\zeta}q_\eta $\\[1ex]$t<r':~\zz{r'+1}\oplus\zz{t}$\\\hspace*{1cm}$i_\eta q_\eta ,~i_\eta\tilde{\zeta}q_{n+2} $}
 &\tabincell{l}{$r'>r:\Z\oplus\zz{r+1}$\\\hspace*{1cm}$i_\eta\tilde{\zeta}q_{n+2},~i_{M'}B(\chi^{r+1}_{r'})\bar{\xi}_r $ \\[1ex]$r'\leq r:\Z\oplus\zz{r'}$\\\hspace*{1cm}$\bar{\vartheta}^r_{r'},~i_{M'}B(\chi^{r+1}_{r'})\bar{\xi}_r  $} &\tabincell{l}{$r\geq r'\leq t:~\zz{t+1}\oplus\zz{r'}$\\\hspace*{1cm}$\bar{\vartheta}^r_{r'} q_{\uc},~i_{M'}B(\chi^{r+1}_{r'})\bar{\xi}_r  q_{\uc}$\\[1ex]$r\geq r'>t:~\zz{r'+1}\oplus\zz{t}$\\\hspace*{1cm}$\bar{\vartheta}^r_{r'} q_{\uc},~i_\eta\tilde{\zeta}q_{n+2}$\\[1ex]$r<r':~\zz{r+1}\oplus\zz{t}$\\\hspace*{1cm}$i_{M'}B(\chi^{r+1}_{r'})\bar{\xi}_r q_{\uc},~i_\eta\tilde{\zeta}q_{n+2}$}\\
\hline
$C^{n+2, t'}_{r'}$&\tabincell{c}{$\Z/{2^{r'}}$\\$i_n\bar{\zeta}$} &
\tabincell{l}{$ t'\geq t<r':\zz{r'+1}\oplus \zz{t}$\\\hspace*{1cm}$i_{\oc'}\tilde{\vartheta}^t_{t'} ,~i_{\oc'}\tilde{\xi}_{t'} B(\chi^t_{t'+1})q_M$\\[1ex]$t'\geq t\geq r':\zz{t+1}\oplus\zz{r'}$\\\hspace*{1cm}$i_{\oc'}\tilde{\vartheta}^t_{t'},~i_n\bar{\zeta}q_\eta $\\[1ex]$t'<t:\zz{t'+1}\oplus\zz{r'}$\\\hspace*{1cm}$i_{\oc'}\tilde{\xi}_{t'} B(\chi^t_{t'+1})q_M,~i_n\bar{\zeta}q_\eta $} & \tabincell{c}{$\Z/{2^{m+1}}\oplus  \Z/{2^{l'}}$\\$i_{n+1}q_{n+1},~i_{M'}B(\chi^{r+1}_{r'})\bar{\xi}_r $}
& \tabincell{l}{$r'>r\vee t'<t:~\zz{m+1}\oplus\zz{l'}\oplus \zz{n^{''}}$\\\hspace*{1cm}$ i_{n+1}q_{n+1},~ i_{M'}B(\chi^{r+1}_{r'})\bar{\xi}_r q_{\uc}$, \\\hspace*{3cm}~$i_{\oc'}\tilde{\xi}_{t'} B(\chi^t_{t'+1})q_M$ \\[1ex]$t'\geq t<r'\leq r:~\zz{m+1}\oplus\zz{r'+1}\oplus\zz{t}$\\\hspace*{1cm}$i_{n+1}q_{n+1},~L(\chi),~i_{\oc'}\tilde{\xi}_{t'} B(\chi^t_{t'+1})q_M$ \\[1ex]
$t'\geq t\geq r'\leq r:~\zz{m+1}\oplus\zz{t+1}\oplus\zz{r'}$\\\hspace*{1cm}$~i_{n+1}q_{n+1},~L(\chi),~i_{M'}B(\chi^{r+1}_{r'})\bar{\xi}_r q_{\uc} $}\\[1ex]
\hline
\end{tabular}
\\[1ex]
$j=\max(t,r'),~k=\min(t,r');\hspace{3mm}
l=\min(r,r'),~ l'=\min(r+1,r'); $\\
$m=\min(r,t'), ~m'=\min(r+1,t'),m^{''}=\min(r,t'+1);\hspace{3mm}n=\min(t,t'),~ n^{''}=\min(t,t'+1)$.

\end{amssidewaystable}

\clearpage

\section{Proofs of groups and generators in Tables \ref{table1},\ref{table2}}\label{sec:proofs-tables}

This section is devoted to the proofs of results in Section \ref{sec:tables}. 
 The entries above the double lines in Table \ref{table1} were proved by Brown and Copeland \cite{BC59} and the notation of the generators are due to Baues and Hennes \cite{BH91}. We shall directly use relations (\ref{eq:eta}) and (\ref{eq:B}) in our computations. In the remainder of this section, let $n\geq 4$ and let $X,Y$ be any an indecomposable $\an$-complex, we shall compute the groups $[X,Y]$ one-by-one and determine their explicit generators. Recall that in stable range a homotopy cofibre sequence 
\[X\xra{f}Y\xra{i}Z\xra{q}\Sigma X\xra{\Sigma f}\Sigma Y\]
is also a homotopy fibre sequence.

\begin{proposition}\label{prop:X=spheres}
 The groups $\pi_{n+i}(X)$ with $i\leq 2$ are given by 
 \begin{table}[H]
  \begin{tabular}{c|c|c|c}
    \hline
    &$S^{n}$&$S^{n+1}$&$S^{n+2}$\\
    \hline
    $\ceta$&$\Z~i_n$&$0$&$\Z~\tilde{\zeta}$ \\
    \hline
    $C^{n+2,t}$&$\Z~i_n$&$\zz{t+1}~i_{n+1}$&$\zz{}~i_{n+1}\eta$ \\
    \hline
    $C^{n+2}_r$&$\zz{r}~i_n$&$0$&$\Z\oplus\zz{}~~i_\eta\tilde{\zeta},i_MB(\chi)\tilde{\eta}$
    \\ \hline
    $\CC$&$\zz{r}~i_n$&$\zz{t+1}~i_{n+1}$&$\zz{}\oplus\zz{}~~~i_{n+1}\eta,i_MB(\chi)\tilde{\eta}$\\
    \hline
    \end{tabular}
    \caption{$\pi_{n+i}(X)$ for $i\leq 2$}\label{table:SC}
 \end{table}
 \noindent Here $\tilde{\zeta}\colon S^{n+2}\to \ceta$ is a generator satisfying the formula
 \begin{equation}\label{eq:zeta'-def}
 q_{n+2}\tilde{\zeta}= 2\cdot 1_{n+2}.
 \end{equation}
    \begin{proof}
    The group $\pi_{n+2}(\ceta)$ and the generator $\tilde{\zeta}$ are due to Toda \citep[Section 8.1]{Toda2}. Other homotopy groups $\pi_{n+i}(X)$ can be easily computed and are due to \citep[page 301]{ZP17}. 
    \end{proof}
\end{proposition}

By the \SW duality, we get the cohomotopy groups $[X,S^{n+i}]$ and their generators for $i=0,1,2$.

  \begin{proposition}\label{prop:MC}
   Let $l=\min(r,r'),m''=\min(r,t'+1),n''=\min(t,t')$. 
     The groups $[\m{r}{n},X]$ and $[\m{t}{n+1},X]$ are given by 
    \begin{table}[H]
      \centering
    \begin{tabular}{c|c|c}
    \hline
    &$\m{r}{n}$&$\m{t}{n+1}$\\
    \hline
    $\ceta$&$0$&$\zz{t}~\tilde{\zeta}q_{n+2}$ \\
    \hline
    $C^{n+2,t'}$&$\zz{m''}~i_{n+1}q_{n+1}$&$\zz{n''}\oplus \zz{}~~\tilde{\xi}_{t'}B(\chi^t_{t'+1}),~i_{n+1}\eta q_{n+2}$\\
    \hline
    $C^{n+2}_{r'}$&$\zz{l}~i_{M'}B(\chi^r_{r'})$&$\zz{t}\oplus\zz{}~~i_\eta \tilde{\zeta}q_{n+2},~i_{M'}B(\chi^1_{r'})\tilde{\eta}q_{n+2}$
    \\ \hline
    $	C^{n+2,t'}_{r'}$& \tabincell{c}{ $\zz{l}\oplus\zz{m''}$\\$i_{M'}B(\chi^r_{r'}),~i_{n+1}q_{n+1}$}&\tabincell{c}{$\zz{n''}\oplus\zz{}\oplus\zz{}$\\$i_{\oc'}\tilde{\xi}_{t'}B(\chi^t_{t'+1}),~i_{M'}B(\chi^1_{r'})\tilde{\eta}q_{n+2},~i_{n+1}\eta q_{n+1}$}    \\
    \hline
    \end{tabular}
 \caption{$[\m{r}{n+i},X]$, $i=0,1$}\label{table:MC}
    \end{table}
   \noindent Here $\tilde{\xi}_{t'}\colon\m{t'+1}{n+1}\to C^{n+2,t'}$ satisfies the formula $\tilde{\xi}_{t'}i_{n+1}=i_{n+1}.$
    \begin{proof}
    (1) $\ul{\ceta}$. Applying  $[-,\ceta]$ to the homotopy cofibre sequences for $\m{r}{n}$, $\m{t}{n+1}$, respectively, there are exact sequence of groups:
    \begin{align*}
    &0= [S^{n+1},\ceta]\xra{q_{n+2}^*}[\m{r}{n},\ceta]\xra{i_n^*}[S^n,\ceta]\xra{2^r}[S^n,\ceta];\\
    &[S^{n+2},\ceta]\xra{2^t}[S^{n+2},\ceta]\xra{q_{n+2}^*}[\m{t}{n+1},\ceta]\to [S^{n+1},\ceta]=0.
    \end{align*}
 Then it follows that $[\m{r}{n},\ceta]=0,[\m{t}{n+1},\ceta]\cong\zz{t}\langle \tilde{\zeta}q_{n+2}\rangle$.

    (2) $\ul{C^{n+2,t'}}$. There is an exact sequence by applying $[\m{r}{n},-]$ to \textbf{Cof3}:
    \[[\m{r}{n},C^{n+1}_\eta]\xra{(2^{t'}q_{n+1})_*}[\m{r}{n},S^{n+1}]\xra{(i_{n+1})_*}[\m{r}{n},C^{n+2,t'}]\to [\m{r}{n},\ceta]=0,\]
    where $[\m{r}{n},C^{n+1}_\eta]\cong \zz{r}\langle \Sigma^{-1}(\tilde{\zeta}q_{n+2})\rangle$. By (\ref{eq:zeta'-def}) we then have
    \[[\m{r}{n},C^{n+2,t'}]\cong\zz{\min(r,t'+1)}\langle i_{n+1}q_{n+1}\rangle.\]
    For the group $[\m{t}{n+1},C^{n+2,t'}]$,
    $[S^{n+1},C^{n+2,t'}]\cong\zz{t'+1}\langle i_{n+1}\rangle$ implies that there exists an extension $\tilde{\xi}_{t'}\colon \m{t'+1}{n+1}\longrightarrow C^{n+2,t'}$ such that $\tilde{\xi}_{t'}i_{n+1}=i_{n+1}$.
    Let
    \[f=\big(i_{n+1}(\Sigma\bar{\eta}),\tilde{\xi}_{t'}\big)\colon \m{}{n+2}\vee \m{t'+1}{n+1}\to C^{n+2,t'},\]
    then one checks that $f_\sharp\colon \pi_{n+i}(\m{}{n+2}\vee \m{t'+1}{n+1})\to \pi_{n+i}(C^{n+2,t'})$ is an isomorphism if $i=1$ and an epimorphism if $i=2$.
    For simplicity we write $X=\m{}{n+2}\vee \m{t'+1}{n+1}, \oc=C^{n+2,t'}$.
    Consider the commutative diagram induced by $f$, in which rows are exact sequences: 
    \[\begin{tikzcd}[sep=scriptsize]
      \pi_{n+2}(X)\ar[r,"2^t"]\ar[d,"f_\sharp"]&\pi_{n+2}(X)\ar[r,"q_{n+2}^\ast"]\ar[d,"f_\sharp"]&{[\m{t}{m+1},X]}\ar[r,"i_{n+1}^\ast"]\ar[d,"f_\sharp"]&\pi_{n+1}(X)\ar[r,"2^t"]\ar[d,"f_\sharp"]&\pi_{n+1}(X)\ar[d,"f_\sharp"]\\
      \pi_{n+2}(\oc)\ar[r,"2^t"]&\pi_{n+2}(\oc)\ar[r,"q_{n+2}^\ast"]&{[\m{t}{m+1},\oc]}\ar[r,"i_{n+1}^\ast"]&\pi_{n+1}(\oc)\ar[r,"2^t"]&\pi_{n+1}(\oc)
    \end{tikzcd}\]
   After computing kernels and cokernels of the four multiplications $2^t$, we get the commutative diagram of exact rows and columns:
    \[\begin{tikzcd}[sep=scriptsize]
     \Z/2\langle i_{n+2}+ i_{n+1}\eta\rangle \ar[d,tail] \ar[r,"q_{n+2}^*",tail] & \ker(f_\sharp) \ar[d,tail] \ar[r] & 0 \ar[d]&\\
     \Z/2\langle i_{n+2}\rangle \oplus \Z/2\langle i_{n+1}\eta\rangle \ar[d,"f_\sharp",two heads] \ar[r,"q_{n+2}^*",tail] & {[\m{t}{n+1},X]} \ar[d,"f_\sharp",two heads] \ar[r,two heads] & K \ar[d,"\cong"] \\
     \Z/2\langle i_{n+1}\eta\rangle  \ar[r,"q_{n+2}^*",tail] & {[\m{t}{n+1},\oc]} \ar[r] & K
    \end{tikzcd}\]
   where  $K=\ker\big(\zz{t'+1}\langle i_{n+1}\rangle \xra{2^t} \zz{t'+1}\langle i_{n+1}\rangle\big)$. Then by the exactness of the middle column and the group 
    \begin{align*}
    [\m{t}{n+1},X]\cong& \Z/2 \langle i_{n+2}q_{n+2}\rangle \oplus \zz{\min({t,t'+1})}\langle B(\chi^{t}_{t'+1}) \rangle \oplus \Z/2 \langle i_{n+1}\eta q_{n+2}\rangle\\
    \cong&\zz{\min({t,t'+1})}\langle B(\chi^t_{t'+1}) \rangle \oplus \Z/2 \langle i_{n+1}\eta q_{n+2}\rangle\oplus\Z/2 \langle i_{n+2}q_{n+2}+i_{n+1}\eta q_{n+2}\rangle
    \end{align*}
    we get 
    $[\m{t}{n+1},\oc]\cong \zz{\min({t,t'+1})}\langle \tilde{\xi}_{t'} B(\chi^t_{t'+1})\rangle \oplus \Z/2 \langle i_{n+1}\eta q_{n+2}\rangle.$

    (3) $\ul{C^{n+2}_{r'}}$. There is an exact sequence by applying $[\m{r}{n},-]$ to \textbf{Cof2}:
    \[[\m{r}{n},S^{n+1}]\xra{i_n\eta}[\m{r}{n},\m{r'}{n}]\xra{i_M}[\m{r}{n},C^{n+2}_{r'}]\to[\m{r}{n},S^{n+2}]=0.\]
    Then
    $[\m{r}{n},C^{n+2}_{r'}]\cong\Z/2^{\min(r,r')}\langle i_{M'}B(\chi^r_{r'})\rangle$ then follows from the group structure and generators of $[\m{r}{n},\m{r'}{n}]$.

   The group $[\m{t}{n+1},C^{n+2}_{r'}]$ follows from the exact sequence:
    \[[S^{n+2},C^{n+2}_{r'}]\xra{2^t}[S^{n+2},C^{n+2}_{r'}]\xra{q_{n+2}^*}[\m{t}{n+1},C_{r'}^{n+2}]\to [S^{n+1},C^{n+2}_{r'}]=0.\]
  
    (4) $\ul{C^{n+2,t'}_{r'}}$. Let
    $g=(i_{M},i_{\oc})\colon \m{r'}{n}\vee C^{n+2,t'}\to C^{n+2,t'}_{r'}$,  then \[g_\ast\colon \pi_{n+i}(\m{r'}{n}\vee C^{n+2,t'})\to \pi_{n+i}(C^{n+2,t'}_{r'})\] is an epimorphism for $i=0,1$.
   For short write $Y=\m{r'}{n}\vee C^{n+2,t'}$ and $C'=C^{n+2,t'}_{r'}$. Consider the following commutative diagram induced by $g$ with exact rows:
   \[\begin{tikzcd}[sep=scriptsize]
    \pi_{n+1}(Y)\ar[r,"2^r"]\ar[d,"g_\ast"]&\pi_{n+1}(Y)\ar[r,"q_{n+1}^\ast"]\ar[d,"g_\ast"]&{[\m{r}{n},Y]}\ar[r,"i_n^\ast"]\ar[d,"g_\ast"]&\pi_n(Y)\ar[r,"2^r"]\ar[d,"g_\ast"]&\pi_n(Y)\ar[d,"g_\ast"]\\
    \pi_{n+1}(C')\ar[r,"2^r"]&\pi_{n+1}(C')\ar[r,"q_{n+1}^\ast"]&{[\m{r}{n},C']}\ar[r,"i_n^\ast"]&\pi_n(C')\ar[r,"2^r"]&\pi_n(C')
   \end{tikzcd}\]
  By similar arguments as that in the proof of $[\m{t}{n+1},C^{n+2,t'}]$, there is a commutative diagram of exact sequences:
  \[\begin{tikzcd}[sep=scriptsize]
    \zz{}\langle (i_n\eta,2^{\min(r-1,t')}\{i_{n+1}\})\rangle\ar[r,"q_{n+1}^\ast",tail]\ar[d]&\ker(g_\ast)\ar[d]\ar[r]&0\ar[d]\\
    \Z/2\langle i_n\eta\rangle \oplus \Z/{2^{\min(r,t'+1)}}\langle \{ i_{n+1}\}\rangle\ar[r,"q_{n+1}^*",tail]\ar[d,"g_\ast",two heads]& {[\m{r}{n},Y]} \ar[d,"g_\ast"]\ar[r,"i_n^*",two heads]& K_1\ar[d,"\cong"]\\
    \Z/{2^{\min(r,t'+1)}}\langle \{ i_{n+1}\}\rangle\ar[r,"q_{n+1}^\ast",tail]&
     {[\m{r}{n},C]}\ar[r,"i_n^\ast",two heads] &K_2
  \end{tikzcd}\]
where $ K_1=\ker\big(\pi_n(Y)\xra{2^r}\pi_n(Y)\big),\quad 
K_2=\ker\big(\pi_n(C)\xra{2^r}\pi_n(C)\big)$.
 It follows that $g_\ast\colon [\m{r}{n}, Y]\to [\m{r}{n},C]$ is an epimorphism, where $[\m{r}{n},Y]$ has been known.
    Thus we compute that 
    \[[\m{r}{n},C^{n+2,t'}_{r'}]\cong \Z/{2^{\min(r,r')}}\langle i_{M'}B(\chi^r_{r'})\rangle\oplus \Z/{2^{\min(r,t'+1)}}
     \langle i_{n+1}q_{n+1} \rangle.\]

    For the group $[\m{t}{n+1},C^{n+2,t'}_{r'}]$, there exists an extension
    \[\ol{i_{M'}B(\chi^1_{r'})\tilde{\eta}}\colon \m{}{n+2}\to C^{n+2,t'}_{r'}\]
    such that $\ol{i_{M'}B(\chi^1_{r'})\tilde{\eta}}\circ i_{n+2}= i_{M'}B(\chi^1_{r'})\tilde{\eta}\colon S^{n+2}\to C^{n+2,t'}_{r'}$.
    One then checks that the induced homomorphism \[h=(\ol{i_{M'}B(\chi^1_{r'})\tilde{\eta}},i_{\oc'})_*: \pi_{j}(M_2^{n+2} \vee C^{n+2,t'})\to \pi_j(C^{n+2,t'}_{r'})\]
    is an isomorphism for $j=n+1,n+2$.
    Similarly, by the commutative diagram of exact sequences induced by $h$ and the five-lemma, we get an isomorphism 
    \[ [\m{t}{n+1},M_2^{n+2} \vee C^{n+2,t'}]\xra[\cong]{(\ol{i_{M'}B(\chi^1_{r'})\tilde{\eta}},i_{\oc})_\ast} [\m{t}{n+1},C^{n+2,t'}_{r'}] . \]
    Let  $n{''}=\min(t,t'+1)$, then
     \begin{align*}
      [\m{t}{n+1},C^{n+2,t'}_{r'}]\cong  & \Z/2\langle i_{M'}B(\chi^1_{r'})\tilde{\eta}q_{n+2}\rangle\oplus \Z/{2^{n{''}}}\langle i_{\oc'}\tilde{\xi}_{t'} B(\chi^t_{t'+1})\rangle \\
      &\oplus \Z/2\langle i_{n+1}\eta q_{n+2}\rangle.
     \end{align*}
    \end{proof}
    \end{proposition}

    \begin{lemma}\label{lem:xi}
      The following hold:
      \begin{enumerate}
        \item After choosing suitably, $\tilde{\xi}_t\in [\m{t+1}{n+1},C^{n+2,t}] $ simultaneously satisfies the formulas:
        \begin{equation}\label{eq:xi-def}
        \tilde{\xi}_t i_{n+1}= i_{n+1},\quad  q_\eta \tilde{\xi}_t = \tilde{\zeta}q_{n+2},\quad  q_{n+2}\tilde{\xi}_t = 2q_{n+2}, \quad  q_{M}\tilde{\xi}_t = B(\chi^{t+1}_t);
        \end{equation}
        \item Dually, there exist a map $\bar{\xi}_t\in [C^{n+2}_t,\m{t+1}{n}]$ simultaneously satisfies the formulas:
        \begin{equation}\label{eq:xi'-def}
        q_{n+1}\bar{\xi}_t = q_{n+1},\quad  \bar{\xi}_t i_\eta= i_n\bar{\zeta},\quad  \bar{\xi}_t i_n= 2 i_n,\quad  \bar{\xi} i_{M}= B(\chi^t_{t+1}).
        \end{equation}
      \end{enumerate}

    \begin{proof}
We only prove (\ref{eq:xi-def}) here and omit the similar proof of (\ref{eq:xi'-def}).  Consider the diagram with homotopy cofibre sequence rows:
\[\begin{tikzcd}
  S^{n+1}\ar[rr,"2^{t+1}"]\ar[d,"{\Sigma^{-1}\tilde{\zeta}}"]&&S^{n+1}\ar[r,"i_{n+1}"]\ar[d,equal]&\m{t+1}{n+1}\ar[r,"q_{n+2}"]\ar[d,dashed,"\tilde{\xi}_t"] &S^{n+2}\ar[d,"\tilde{\zeta}"]\\
    C^{n+1}_\eta\ar[rr,"2^t\Sigma^{-1}q_{n+1}"]&&S^{n+1}\ar[r,"i_{n+1}"]&C^{n+2,t}\ar[r,"q_\eta"]&\ceta
\end{tikzcd}\]
    By (\ref{eq:zeta'-def}), the first square is homotopy commutative, which implies that there exist a map  $\tilde{\xi}_t$ filling in the right two commutative squares: \[\tilde{\xi}_ti_{n+1}=i_{n+1},~q_\eta\tilde{\xi}_t=\tilde{\zeta}q_{n+2}.\]
   It then follows that 
    \[q_{n+2}\tilde{\xi}_t=q_{n+2}q_\eta \tilde{\xi}_t=q_{n+2}\tilde{\zeta}q_{n+2}=2q_{n+2}.\]
    For the last relation equality, by the group structure and generators of $[\m{t+1}{n+1},\m{t}{n+1}]$, we may put  \[q_{M_t}\tilde{\xi}_t=x\cdot B(\chi^{t+1}_t)+y\cdot i_{n+1}\eta q_{n+2}\] for some $x\in\zz{t},y\in\zz{}$. By composing $q_M$ on both sides of the equality from the left, together with Theorem \ref{thm:eqs} (\ref{eq:B}), we get
    \begin{align*}
    2q_{n+2}=q_{n+2}\tilde{\xi}_t&=q_{n+2}q_{M_t}\tilde{\xi}_t\\
    &=x\cdot q_{n+2}B(\chi^{t+1}_t)=2x\cdot q_{n+2}.
    \end{align*}
    Thus $x=1$.

    If  $y=0$, the proof is done; otherwise, substituting $\tilde{\xi}_t$ by $\tilde{\xi}_t+i_{n+1}\eta q_{n+2}$, then  $q_{M_t}\tilde{\xi}_t=B(\chi^{t+1}_t)$ holds. One can check that the new $\tilde{\xi}_t$ satisfies all the relation formulas discussed above, and therefore the proof is completed.
    \end{proof}
    \end{lemma}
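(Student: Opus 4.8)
The plan is to build $\tilde{\xi}$ from a morphism of cofiber sequences, verifying two of the four relations of (\ref{eq:xi-def}) by construction, deducing a third formally from those, and then correcting the resulting map by a single error term to obtain the fourth; the dual assertion (\ref{eq:xi'-def}) will follow at the end by \SW duality. The one piece of geometric input needed is the relation $q_{n+2}\tilde{\zeta}=2\cdot 1_{n+2}$ of (\ref{eq:zeta'-def}).

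First I would compare \textbf{Cof1} for $\m{t+1}{n+1}$, namely $S^{n+1}\xra{2^{t+1}}S^{n+1}\xra{i_{n+1}}\m{t+1}{n+1}\xra{q_{n+2}}S^{n+2}$, with a desuspension of \textbf{Cof3} for $C^{n+2,t}$, namely $C^{n+1}_\eta\xra{2^tq_{n+1}}S^{n+1}\xra{i_{n+1}}C^{n+2,t}\xra{q_\eta}\ceta$. Take $\Sigma^{-1}\tilde{\zeta}\colon S^{n+1}\to C^{n+1}_\eta$ as the first vertical map and $1_{n+1}$ as the second. The leftmost square commutes since $2^tq_{n+1}\circ\Sigma^{-1}\tilde{\zeta}=2^t\cdot\Sigma^{-1}(q_{n+2}\tilde{\zeta})=2^{t+1}\cdot 1_{n+1}$ by (\ref{eq:zeta'-def}), so the cofiber-sequence property yields a map $\tilde{\xi}\colon\m{t+1}{n+1}\to C^{n+2,t}$ with $\tilde{\xi}i_{n+1}=i_{n+1}$ and $q_\eta\tilde{\xi}=\tilde{\zeta}q_{n+2}$. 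Since the canonical projection $q_{n+2}\colon C^{n+2,t}\to S^{n+2}$ factors as $q_{n+2}q_\eta$, the third relation is then automatic: $q_{n+2}\tilde{\xi}=q_{n+2}q_\eta\tilde{\xi}=q_{n+2}\tilde{\zeta}q_{n+2}=2q_{n+2}$.

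To force $q_M\tilde{\xi}=B(\chi)$, I would use the group $[\m{t+1}{n+1},\m{t}{n+1}]\cong\zz{t}\langle B(\chi)\rangle\oplus\zz{}\langle i_{n+1}\eta q_{n+2}\rangle$ from Table \ref{table1} and write $q_M\tilde{\xi}=x\cdot B(\chi)+y\cdot i_{n+1}\eta q_{n+2}$ with $x\in\zz{t}$, $y\in\zz{}$. Composing on the left with the canonical projection $q_{n+2}\colon\m{t}{n+1}\to S^{n+2}$ and using $q_{n+2}q_M=q_{n+2}$, $q_{n+2}B(\chi)=2q_{n+2}$ (from (\ref{eq:B})) and $q_{n+2}i_{n+1}=0$ gives $2q_{n+2}=2x\cdot q_{n+2}$ in $[\m{t+1}{n+1},S^{n+2}]\cong\zz{t+1}$, whence $x=1$ in $\zz{t}$. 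If $y=0$ the proof is done; otherwise replace $\tilde{\xi}$ by $\tilde{\xi}+i_{n+1}\eta q_{n+2}$. The error term $i_{n+1}\eta q_{n+2}$ is killed on the left by $q_{n+2}$ and by $q_\eta$ (because $q_\eta i_{n+1}=0$) and on the right by $i_{n+1}$ (because $q_{n+2}i_{n+1}=0$), so the first three relations are unaffected, while $q_M(\tilde{\xi}+i_{n+1}\eta q_{n+2})=B(\chi)+(y+1)i_{n+1}\eta q_{n+2}=B(\chi)$ since $q_M i_{n+1}=i_{n+1}$. The modified $\tilde{\xi}$ then satisfies all of (\ref{eq:xi-def}).

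For part (2) I would dualize. Under the \SW duality $[\m{t+1}{n+1},C^{n+2,t}]\cong[C^{n+2}_t,\m{t+1}{n}]$ the map $\tilde{\xi}$ corresponds to the desired $\bar{\xi}$, and the four equations of (\ref{eq:xi-def}) transpose exactly to those of (\ref{eq:xi'-def}): the bottom-cell inclusion $i_{n+1}$ becomes the projection $q_{n+1}$, $q_{n+2}$ becomes $i_n$, $q_M$ becomes $i_M$, $q_\eta$ becomes $i_\eta$, $\tilde{\zeta}$ becomes $\bar{\zeta}$, and the Moore-space map $B(\chi)$ is self-dual; so running the same ladder with \textbf{Cof3} for $C^{n+2}_t$ and the cofiber sequence for $\m{t+1}{n}$ in the dual positions produces $\bar{\xi}$ with (\ref{eq:xi'-def}). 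The step I expect to need the most care is the correction in part (1): one must check that the single error term $i_{n+1}\eta q_{n+2}$ is annihilated by $q_\eta$ and $q_{n+2}$ but not by $q_M$, so that adding it repairs precisely the offending relation without spoiling the others. This depends only on knowing which cell each structure map in the cofiber sequences of Section \ref{sec:Cofs} collapses, and the rest is a routine diagram chase using (\ref{eq:eta}), (\ref{eq:B}) and Theorem \ref{thm:eqs}(\ref{eq:zeta}).
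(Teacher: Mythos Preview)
Your proof is correct and follows essentially the same route as the paper: the same ladder of cofiber sequences with $\Sigma^{-1}\tilde{\zeta}$ on the left, the same derivation of the first three relations, and the same correction by $i_{n+1}\eta q_{n+2}$ to force $q_M\tilde{\xi}=B(\chi)$. The only difference is in part (2): the paper simply says the dual argument is analogous and omits it, whereas you invoke \SW duality to transport (\ref{eq:xi-def}) to (\ref{eq:xi'-def}) directly; both are fine, and your verification that the error term is annihilated by $q_\eta$, $q_{n+2}$, and precomposition with $i_{n+1}$ is in fact more explicit than the paper's ``one can check''.
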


Next we prove the group structure and generators in Table \ref{table2}.
By the \SW duality, it's easy to get the groups $[X,Y]$ with generators above the double lines, while the remaining entries of Table \ref{table2} are exactly determined by the following Proposition \ref{prop:CC} and \ref{prop:CC'}.

  \begin{proposition}\label{prop:CC}
    \begin{flushleft}
      
    \end{flushleft}
    \begin{enumerate}[$(1)$]
    \item $[\ceta,\ceta]\cong\Z\langle 1_\eta\rangle\oplus \Z\langle i_n\bar{\zeta}\rangle\cong\Z\langle 1_\eta\rangle\oplus \Z\langle\tilde{\zeta}q_{n+2}\rangle$, where $\bar{\zeta}\in [\ceta,S^n]$ and $\tilde{\zeta}\in [S^{n+2},\ceta]$ satisfy the formulas:
    \begin{equation}\label{eq:zeta-def}
    q_{n+2}\tilde{\zeta}=2\cdot 1_{n+2},~\bar{\zeta}i_n=2\cdot  1_n;~i_n\bar{\zeta}+\tilde{\zeta}q_{n+2}=2\cdot 1_\eta.
    \end{equation}

    \item $[\ceta,C^{n+2}_{r}]\cong \Z\langle i_\eta\rangle\oplus\zz{r}\langle i_n\bar{\zeta}\rangle $, $[\ceta,Y]\cong  \Z\langle i_n\bar{\zeta}\rangle$ for $Y=C^{n+2,t}$ or $C^{n+2,t}_r$.

    \item $[C^{n+2,t},C^{n+2,t'}]\cong \left\{\begin{array}{ll}
     \zz{t'+1}\langle \tilde{\xi}_{t'} B(\chi^t_{t'+1})q_M\rangle \oplus \Z\langle i_n\bar{\zeta}q_\eta \rangle&t> t';\\
     \zz{t}\langle \tilde{\xi}_{t'} B(\chi^t_{t'+1})q_M\rangle \oplus \Z\langle \tilde{\vartheta}^t_{t'} \rangle&t\leq t'
     \end{array}\right. $, where $\tilde{\vartheta}$ satisfies the relations ($t\leq t'$):
     \begin{equation}\label{eq:vartheta-def}
        \tilde{\vartheta}^t_{t'}i_n=i_n,~q_{M'}\tilde{\vartheta}^t_{t'}=B(\chi^t_{t'})q_M,~
        \tilde{\vartheta}^t_{t'}i_{n+1}=2^{t'-t}i_{n+1},~q_\eta\tilde{\vartheta}^t_{t'}=q_\eta.
     \end{equation}
    \begin{equation}\label{eq:2vartheta}
    i_n\bar{\zeta}q_\eta=2\tilde{\vartheta}^t_{t'}-\tilde{\xi}_{t'}B(\chi^t_{t'+1})q_M.
    \end{equation}

    \item\label{oc-uc} $[C^{n+2,t},C^{n+2}_{r'}]\cong \left\{
                         \begin{array}{ll}
                           \zz{t+1}\langle i_{\eta }q_\eta \rangle\oplus \zz{r'}\langle i_n\bar{\zeta}q_\eta \rangle, & \hbox{$t\geq r'$;} \\
                           \zz{r'+1}\langle i_{\eta }q_\eta \rangle\oplus \zz{t}\langle i_\eta\tilde{\zeta}q_{n+2}\rangle, & \hbox{$t<r'$;}
                         \end{array}\right.$.
    \item $[C^{n+2,t},C^{n+2,t'}_{r'}]\cong \left\{
    \begin{array}{ll}
     \zz{r'+1}\langle i_{\oc'}\tilde{\vartheta}^t_{t'} \rangle\oplus \zz{t} \langle i_{\oc'}\tilde{\xi}_{t'} B(\chi^t_{t'+1})q_M\rangle & \hbox{$t'\geq t<r'$;} \\
    \zz{t+1}\langle i_{\oc'}\tilde{\vartheta}^t_{t'} \rangle\oplus \zz{r'}\langle i_n\bar{\zeta}q_\eta  \rangle  &\hbox{$t'\geq t\geq r'$;}\\
    \zz{t'+1}\langle i_{\oc'}\tilde{\xi}_{t'} B(\chi^t_{t'+1})q_M\rangle\oplus \zz{r'} \langle i_n\bar{\zeta}q_\eta \rangle & \hbox{$t'<t$.}
    \end{array}\right.$.
                       
    \item $[C^{n+2}_r,C^{n+2,t'}]\cong\zz{\min(r,t')}\langle i_{n+1}q_{n+1}\rangle$.

    \item $[C^{n+2}_r,C^{n+2,t'}_{r'}]\cong \zz{\min(r+1,r')}\langle i_{M'}B(\chi^{r+1}_{r'})\bar{\xi}_{r} \rangle\oplus \zz{\min(r,t')+1}\langle i_{n+1}q_{n+1}\rangle$.

    \end{enumerate}
    \begin{proof}
    (1) Applying $[\ceta,-]$ to \textbf{Cof1} for $\ceta$, there is an exact sequence:
    \[0\to [\ceta,S^n]\xra{(i_n)_*}[\ceta,\ceta]\xra{(q_{n+2})_*}[\ceta,S^{n+2}]\to 0.\]
    Since $[\ceta,S^n]\cong \Z\langle \bar{\zeta}\rangle, [C^{n+2},S^{n+2}]\cong \Z\langle q_{n+2}\rangle$, where $\bar{\zeta}$ satisfies $\bar{\zeta}i_n=2\cdot 1_n$,   the above exact sequence  splits. Hence the group $[\ceta,\ceta]$ is proved. The other generating set follows from the formula (cf. \citep[Section 8.1]{Toda2}):
    \[i_n\bar{\zeta}+\tilde{\zeta}q_{n+2}=2\cdot 1_\eta.\]

    (2) The group structure and generators are immediate by applying the exact functor $[\ceta,-]$ to \textbf{Cof1}s for $C^{n+2,t},C^{n+2}_{r},C^{n+2,t}_{r}$, respectively.  

    (3) Write $\oc'=C^{n+2,t'}$ for short. There is an exact sequence by applying $[-,C^{n+2,t'}]$ to \textbf{Cof2} for $C^{n+2,t}$:
  \[[S^{n+1},\oc']\xra{(\eta q_{n+2})^*}[\m{t}{n+1},\oc']\xra{q_M^*}[C^{n+2,t},\oc']\xra{i_n^*}[S^{n},\oc']\xra{(\eta q_{n+1})^*}[\m{t}{n},\oc'],
  \]
 where all groups except $[C^{n+2,t},\oc']$ are listed in Table \ref{table1}. Since $i_n\eta=2^{t'}\cdot i_{n+1}\in [S^{n+1},\oc']$ (Lemma \ref{lem:cofib}), we get the following two splitting short exact sequences:
    \begin{align*}
    t>t':&\quad 0\to\zz{t'+1}\langle \tilde{\xi}_{t'} B(\chi^t_{t'+1})\rangle\xra{q_M^*}[C^{n+2,t},C^{n+2,t'}]\xra{i_n^*}\Z\langle 2\cdot i_n\rangle\to 0;\\
    t\leq t':&\quad 0\to\zz{t}\langle \tilde{\xi}_{t'} B(\chi^t_{t'+1})\rangle\xra{q_M^*}[C^{n+2,t},C^{n+2,t'}]\xra{i_n^*}\Z\langle i_n\rangle\to 0.
    \end{align*}
    If $t>t'$, note that $i_n^\ast(i_n\bar{\zeta}q_\eta)=2\cdot i_n$, we have \[[C^{n+2,t},C^{n+2,t'}]\cong\zz{t'+1}\langle  \tilde{\xi}_{t'} B(\chi^t_{t'+1})q_M\rangle \oplus \Z\langle i_n\bar{\zeta}q_\eta \rangle.\]
    If  $t\leq t'$, let $\tilde{\vartheta}^t_{t'}\in [C^{n+2,t},C^{n+2,t'}]$ satisfies \[
    \tilde{\vartheta}^t_{t'}i_n=i_n,~\tilde{\vartheta}^t_t=1_{\oc},\]
    then we get $[C^{n+2,t},C^{n+2,t'}]\cong\zz{t}\langle  \tilde{\xi}_{t'} B(\chi^t_{t'+1})q_M\rangle \oplus \Z\langle \tilde{\vartheta}^t_{t'} \rangle.$

    For the relation formulas (\ref{eq:vartheta-def}), (\ref{eq:2vartheta}), consider the following commutative diagram with homotopy cofibre sequence rows:
    \[\begin{tikzcd}
      \m{t}{n}\ar[d,"B(\chi^t_{t'})"]\ar[r,"\eta q_{n+1}"]&S^n\ar[r,"i_n"]\ar[d,equal]&C^{n+2,t}\ar[r,"q_M"]\ar[d,"\tilde{\vartheta}^t_{t'}",dotted]&\m{t}{n+1}\ar[d,"B(\chi^t_{t'})"]\\
    \m{t'}{n}\ar[r,"\eta q_{n+1}'"]& S^n\ar[r,"i_n"]&C^{n+2,t'}\ar[r,"q_{M'}"]&\m{t'}{n+1}
    \end{tikzcd}\]
  Hence the map $\tilde{\vartheta}^t_{t'}$ satisfies $q_{M'}\tilde{\vartheta}^t_{t'}=B(\chi^t_{t'})q_M.$
 The equality $\tilde{\vartheta}^t_{t'}i_{n+1}=2^{t'-t}i_{n+1}$ then follows, since 
 \[q_{M}\circ i_{n+1}=i_{n+1}\colon S^{n+1}\xra{i_{n+1}} C^{n+2,t}\xra{q_M} \m{t}{n+1}.\]

  Since $[C^{n+2,t},\ceta]\cong\Z\langle q_\eta\rangle\oplus\zz{t}\langle\tilde{\zeta}q_{n+2}\rangle$, we can set  
  \[q_\eta\tilde{\vartheta}^t_{t'}=x\cdot q_\eta+y\cdot \tilde{\zeta}q_{n+2}\]
  for some $x\in\Z,y\in\zz{t}$.
    By composing $q_{n+2}$ on both sides from the left, we have
    \begin{align*}
    (x+2y)\cdot q_{n+2}q_{n+2}\tilde{\vartheta}^t_{t'}&=q_{n+2}\tilde{\vartheta}^t_{t'}=q_{n+2}q_{M'}\tilde{\vartheta}^t_{t'}=q_{n+2}B(\chi^t_{t'})q_M=q_{n+2},
    \end{align*}
    hence  $x+2y=1$.  Composing $i_n$  on both sides from the right, we have  $i_n=x\cdot i_n$ and hence  $x=1,y=0$. Thus $q_\eta \tilde{\vartheta}^t_{t'}=q_\eta$ is proved.

    Since $i_n\bar{\zeta}q_\eta i_n= i_n\bar{\zeta}i_n= 2i_n\in[S^n, C^{n+2,t'}]$, $\tilde{\vartheta}^t_{t'} i_n= i_n$,  we may put  
    \[i_n\bar{\zeta}q_\eta =x\cdot \tilde{\xi}_{t'} B(\chi^t_{t'+1})q_M+2\cdot \tilde{\vartheta}^t_{t'}  ~~,x\in\zz{t}.\]
    By composing $q_{\eta}$ on both sides from the left, we have
    \begin{align*}
     2\cdot q_\eta -\tilde{\zeta}q_{n+2}&= q_\eta i_n\bar{\zeta}q_\eta = i_n\bar{\zeta}q_\eta  ,\text{by ( \ref{eq:zeta-def})}\\
     &= x\cdot q_\eta \tilde{\xi}_{t'} B(\chi^t_{t'+1})q_M+2\cdot q_\eta \tilde{\vartheta}^t_{t'} \\
     &=x\cdot \tilde{\zeta}q_{n+2}B(\chi^t_{t'+1})q_M+2\cdot q_\eta \tilde{\vartheta}^t_{t'} ,\text{by (\ref{eq:xi-def})}.
     \end{align*}
     Note that the composition $q_{n+2}B(\chi^t_{t'+1})q_M$ is homotopic to $q_{n+2}$:
     \[q_{n+2}\colon C^{n+2,t}\xra{q_M}\m{t}{n+1}\xra{B(\chi^t_{t'+1})}\m{t'+1}{n+1}\xra{q_{n+2}}S^{n+2}.\]
     It follows that $x=-1$, and hence (\ref{eq:2vartheta}) is proved.

    (4) If $t\geq r'$, consider the exact sequence induced by \textbf{Cof3} for $C^{n+2,t}$:
    \[[S^{n+2},C^{n+2}_{r'}]\xra{(2^tq_{n+2})^*}[\ceta,C^{n+2}_{r'}]\xra{q_\eta ^*}[C^{n+2,t},C^{n+2}_{r'}]\to 0.\]
    By the generators of the first two groups and relations in (\ref{eq:zeta-def}), we have \[2^t\cdot i_\eta\tilde{\zeta}q_{n+2}=2^{t+1}\cdot i_\eta.\] 
    Thus $[C^{n+2,t'},C^{n+2}_{r'}]\cong \zz{t+1}\langle i_{\eta }q_\eta \rangle\oplus \zz{r'}\langle i_n\bar{\zeta}q_\eta \rangle.$

    If  $t<r'$, consider the exact sequence induced by \textbf{Cof3} for $C^{n+2}_{r'}$:
    \[[C^{n+2,t},S^n]\xra{i_n2^{r'}}[C^{n+2,t},\ceta]\xra{i_\eta}[C^{n+2,t},C^{n+2}_{r'}]\to 0.  \]
    hence $2^{r'}\cdot i_n\bar{\zeta}q_\eta = 2^{r'+1}\cdot q_\eta$, and therefore \[[C^{n+2,t},C^{n+2}_{r'}]\cong \zz{r'+1}\langle i_{\eta}q_\eta \rangle\oplus \zz{t}\langle i_\eta\tilde{\zeta}q_{n+2}\rangle.\]

    (5) There is an exact sequence induced by \textbf{Cof6} for  $C^{n+2,t'}_{r'}$: 
    \[[C^{n+2,t},S^{n}]\xra{(i_n2^{r'})_*}[C^{n+2,t},C^{n+2,t'}]\xra{(i_{\oc'})_*}[C^{n+2,t},C^{n+2,t'}_{r'}]\to 0.\leqno (*)\]

   If $t'< t$,  the above exact sequence turns to be \[\Z\langle \bar{\zeta}q_\eta \rangle\xra{(i_n2^{r'})_\ast}\zz{t'+1}
    \langle  \tilde{\xi}_{t'} B(\chi^t_{t'+1})q_M\rangle\oplus\Z\langle i_n\bar{\zeta}q_\eta \rangle\xra{(i_{\oc'})_\ast}[C^{n+2,t},C^{n+2,t'}_{r'}]\to 0.\]
  Hence in this case we have 
  \[ [C^{n+2,t},C^{n+2,t'}_{r'}]\cong \zz{t'+1}\langle i_{\oc'} \tilde{\xi}_{t'} B(\chi^t_{t'+1})q_M\rangle\oplus \zz{r'} \langle i_n\bar{\zeta}q_\eta \rangle.\]

  If $t'\geq t$,  applying $[C^{n+2,t},-]$ to \textbf{Cof5} for $C^{n+2,t'}_{r'}$, we have an exact sequence
    \[0\to [C^{n+2,t},C^{n+2,t'}_{r'}]\xra[\cong]{(q_{\uc'})_*}[C^{n+2,t},C^{n+2}_{r'}]\xra{(2^{t'}q_{n+2})_*=0}[C^{n+2,t},S^{n+2}]\cong \zz{t}.\]
    Then the exact sequence $(*)$ turns to be
    \[\Z\langle \bar{\zeta}q_\eta \rangle\xra{(i_n2^{r'})_*}\zz{t}
    \langle \tilde{\xi}_{t'} B(\chi^t_{t'+1})q_M\rangle\oplus\Z\langle \tilde{\vartheta}^t_{t'} \rangle\xra{(i_{\oc'})_*}[C^{n+2,t},C^{n+2,t'}_{r'}]\to 0.\leqno(\ast')\]
     If  $r'\geq t\leq t'$, then  \[[C^{n+2,t},C^{n+2,t'}_{r'}]\cong\zz{t}\langle i_{\oc'}\tilde{\xi}_{t'} B(\chi^t_{t'+1})q_M\rangle\oplus \zz{r'+1}\langle i_{\oc'}\tilde{\vartheta}^t_{t'} \rangle.\]
    If $r'\leq t\leq t'$, then \[[C^{n+2,t},C^{n+2,t'}_{r'}]\xleftarrow[\cong]{(i_{\oc})_\ast}\frac{\zz{t}\langle \tilde{\xi}_{t'} B(\chi^t_{t'+1})q_M\rangle\oplus \Z\langle {\tilde{\vartheta}^t_{t'} }\rangle}{\langle 2^{r'}(-\tilde{\xi}_{t'} B(\chi^t_{t'})q_M,2{\tilde{\vartheta}^t_{t'} })\rangle}\cong \zz{t+1}\langle\{\tilde{\vartheta}^t_{t'} \}\rangle\oplus\zz{r'}\langle X\rangle,\]
    where $X=(-\tilde{\xi}_{t'} B(\chi^t_{t'+1})q_M,2\tilde{\vartheta}^t_{t'} )= i_n\bar{\zeta}q_\eta $, by (\ref{eq:2vartheta}).

    (6)  Consider the exact sequence induced by \textbf{Cof1} for $C^{n+2}_r$:
    \[\pi_{n+1}(C^{n+2,t'})\xra{\binom{2^r}{\eta}^\ast}\pi_{n+1}(C^{n+2,t'})\oplus \pi_{n+2}(C^{n+2,t'})\xra{q_S^\ast}[C^{n+2}_r,C^{n+2,t'}]\to 0,\]
where $q_S=\binom{q_{n+1}}{q_{n+2}}$. We compute that \[\coker\binom{2^r}{\eta}^*=\frac{\zz{t'+1}\langle i_{n+1}\rangle\oplus \Z/2\langle i_{n+1}\eta\rangle}{\langle(2^ri_{n+1},i_{n+1}\eta)\rangle}\cong\zz{\min(r,t')+1}\langle \{i_{n+1}\}\rangle.\]

    (7) Write $Y=M_{2^{r'}}^{n} \vee C^{n+2,t'}$ and $C'=C^{n+2,t'}_{r'}$. There is a commutative diagram with exact rows induced by $g=({i_{M'}}, i_{\oc'})\colon Y\to C'$:
    \[\begin{tikzcd}[sep=scriptsize]
     \pi_{n+1}(Y)\ar[r,"\binom{2^r}{\eta}^\ast"]\ar[d,"g_\ast"]&G_2(Y)\ar[r,"q_S^\ast"]\ar[d,"g_\ast"]&{[C^{n+2}_r,Y]}\ar[r,"i_n^\ast"]\ar[d,"g_\ast"]&\pi_n(Y)\ar[r,"\binom{2^r}{\eta}^\ast"]\ar[d,"g_\ast"]&G_1(Y)\ar[d,"g_\ast"]\\
     \pi_{n+1}(C')\ar[r,"\binom{2^r}{\eta}^\ast"]&G_2(C')\ar[r,"q_S^\ast"]&{[C^{n+2}_r,C']}\ar[r,"i_n^\ast"]&\pi_n(C')\ar[r,"\binom{2^r}{\eta}^\ast"]&G_1(C')
    \end{tikzcd}\]
    where $G_i(Z)=\pi_{n+i-1}(Z)\oplus\pi_{n+i}(Z)$, $Z=Y,C',i=1,2$.
   After computing the cokernels and kernels of the homomorphisms $\binom{2^r}{\eta}^\ast$, we get the commutative diagram of exact sequences:
   \[\begin{tikzcd}[row sep=scriptsize,column sep=tiny]
    \Z/2\langle i_n\eta\rangle\ar[d,tail]\ar[r,"q_{n+1}^\ast",tail] & \Z/2\langle i_n\eta q_{n+1}\rangle \ar[d,tail]\ar[r]&0\ar[d]\\
    \zz{m+1}\langle \{i_{n+1}\} \rangle\oplus\Z/2\langle \{B(\chi^1_{r'})\tilde{\eta}\}\rangle\oplus \Z/2\langle i_n\eta \rangle\ar[r,tail]\ar[d,"\widetilde{g_\ast}",two heads]&
     {[C^{n+2}_r,Y]}\ar[r,two heads]\ar[d,"{(i_{M'},i_{\oc'})}_\ast",two heads] & K_1\ar[d,"\cong"]\\
     \zz{m+1}\langle \{i_{n+1}\} \rangle\oplus\Z/2\langle i_{M'}B(\chi^1_{r'})\tilde{\eta}\rangle\ar[r,"q_S^\ast",tail]
     & {[C^{n+2}_r,C']}\ar[r,two heads] & K_2
   \end{tikzcd}\]
    where the two groups $K_1,K_2$ are the kernels of the homomorphisms \[\zz{r'}\langle i_n\rangle\xra{(2^r,\eta)^*}\zz{r'}\langle i_n\rangle\oplus \zz{t'+1}\langle i_{n+1}\rangle.\]
    Then by the exactness of the middle short exact sequence and the group
    \[[C^{n+2}_r,Y]\cong\zz{l'}\langle B(\chi^{r+1}_{r'})\bar{\xi}_r \rangle\oplus\zz{m+1}\langle i_{n+1}q_{n+1}\rangle\oplus \Z/2\langle i_n\eta q_{n+1}\rangle,\]
 we get $[C^{n+2}_r,C']\cong \zz{l'}\langle i_{M'}B(\chi^{r+1}_{r'})\bar{\xi}_r \rangle\oplus \zz{m+1}\langle i_{n+1}q_{n+1}\rangle$,
    where $l'=\min(r+1,r'),m=\min(r,t')$.
    \end{proof}
    \end{proposition}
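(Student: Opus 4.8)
The plan is to obtain the seven isomorphisms one at a time, in each case feeding a cofiber sequence from Section~\ref{sec:Cofs} into an exact functor $[-,W]$ or $[W,-]$ and reading the outer terms off the Brown--Copeland part of Table~\ref{table1} together with Propositions~\ref{prop:X=spheres} and \ref{prop:MC}; several entries of Table~\ref{table2} above the double lines come for free from \SW duality, so only the ones below are at stake. For the easy cases (1), (2) and (6), a single cofiber sequence — \textbf{Cof1} for $\ceta$, or one of the \textbf{Cof1}'s for $C^{n+2,t},C^{n+2}_r,\CC$ fed into $[\ceta,-]$, or \textbf{Cof1} for $C^{n+2}_r$ fed into $[-,C^{n+2,t'}]$ — yields a four- or five-term exact sequence with already-tabulated end terms, and I would write down an explicit splitting section built from the named maps $i_n,i_\eta,\bar\zeta,\tilde\zeta,\tilde\xi,B(\chi)$, using Lemma~\ref{lem:cofib} (the relation $i_n\eta=2^{t'}i_{n+1}$) to see where the connecting homomorphisms land.

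Part (3) needs the new map $\tilde\vartheta\in[C^{n+2,t},C^{n+2,t'}]$ for $t\leq t'$: it is the lift in the ladder of cofibrations $\m{t}{n}\xra{\eta q_{n+1}}S^n\xra{i_n}C^{n+2,t}\xra{q_M}\m{t}{n+1}$ over $\m{t'}{n}\xra{\eta q_{n+1}}S^n\xra{i_n}C^{n+2,t'}\xra{q_{M'}}\m{t'}{n+1}$, normalized by $\tilde\vartheta=1_{\oc}$ at $t=t'$; the relations $\tilde\vartheta i_n=i_n$, $q_{M'}\tilde\vartheta=B(\chi)q_M$, $\tilde\vartheta i_{n+1}=2^{t'-t}i_{n+1}$ and $q_\eta\tilde\vartheta=q_\eta$ are then read off the ladder exactly as for $B(\chi)$ in (\ref{eq:B}) — the first two from the commuting squares, the third by composing with $q_M i_{n+1}=i_{n+1}$, the last by writing $q_\eta\tilde\vartheta=x\,q_\eta+y\,\tilde\zeta q_{n+2}$ and testing against $q_{n+2}$ and $i_n$ — and (\ref{eq:2vartheta}), $i_n\bar\zeta q_\eta=2\tilde\vartheta-\tilde\xi B(\chi)q_M$, follows by writing the left side as $x\,\tilde\xi B(\chi)q_M+2\tilde\vartheta$ in the already-computed $[C^{n+2,t},C^{n+2,t'}]$ and composing with $q_\eta$, using (\ref{eq:zeta-def}), (\ref{eq:xi-def}) and $q_{n+2}B(\chi)q_M=q_{n+2}$. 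Part (4) is a direct computation: apply $[-,C^{n+2}_{r'}]$ to \textbf{Cof3} of $C^{n+2,t}$ when $t\geq r'$, or $[C^{n+2,t},-]$ to \textbf{Cof3} of $C^{n+2}_{r'}$ when $t<r'$, in each case using (\ref{eq:zeta-def}). Part (5) comes by chaining \textbf{Cof5} and \textbf{Cof6} of $\CC$ and substituting the groups from (3) and (4); the only subcase needing care is $r'\leq t\leq t'$, where the quotient of $\zz{t}\langle\tilde\xi B(\chi)q_M\rangle\oplus\Z\langle\tilde\vartheta\rangle$ by the subgroup generated by $2^{r'}(-\tilde\xi B(\chi)q_M,2\tilde\vartheta)$ is identified with $\zz{t+1}\oplus\zz{r'}$ precisely because (\ref{eq:2vartheta}) names the $\zz{r'}$-generator as $i_n\bar\zeta q_\eta$.

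Part (7), the group $[C^{n+2}_r,\CC]$, is the main obstacle: neither $C^{n+2}_r$ nor $\CC$ is resolved by a cofibration with tractable homotopy groups on both ends, so I would compare with a wedge. Put $Y=\m{r'}{n}\vee C^{n+2,t'}$ and $C'=\CC$, and let $g=(i_M,i_{\oc})\colon Y\to C'$, available from \textbf{Cof6} for $\CC$; checking on the table generators, $g$ induces an epimorphism on $\pi_{n+i}$ for $i=0,1$. Applying $[-,W]$ to \textbf{Cof1} for $C^{n+2}_r$ with $W=Y$ and $W=C'$ gives a map of five-term exact sequences whose flanking vertical maps are $g_\sharp$ on $\pi_n,\pi_{n+1},\pi_{n+2}$; computing the kernels and cokernels of the two instances of the attaching-map homomorphism $\binom{2^r}{\eta}^\ast$ and running the $3\times 3$ lemma exhibits $[C^{n+2}_r,C']$ as the quotient of the already-known $[C^{n+2}_r,Y]\cong\zz{l'}\langle B(\chi)\bar\xi\rangle\oplus\Z/2\langle i_n\eta q_{n+1}\rangle\oplus\zz{m+1}\langle i_{n+1}q_{n+1}\rangle$ by $\ker g_\sharp$, which is the $\Z/2$-summand generated by $i_n\eta q_{n+1}$. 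Hence $[C^{n+2}_r,C']\cong\zz{l'}\langle i_MB(\chi)\bar\xi\rangle\oplus\zz{m+1}\langle i_{n+1}q_{n+1}\rangle$ with $l'=\min(r+1,r')$, $m=\min(r,t')$, as asserted.

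The step I expect to cost the most is the torsion bookkeeping inside part (7): one must pin down which cyclic orders survive in the cokernel and kernel of $\binom{2^r}{\eta}^\ast$ on the $C'$-side and check that the vertical comparison matches them with the corresponding data on the $Y$-side, so that the $3\times 3$ lemma applies cleanly. The decisive tool there is again Lemma~\ref{lem:cofib}: the relations $i_n\eta=2^{t'}i_{n+1}$ and $\eta q_{n+2}=2^r q_{n+1}$ collapse the mixed pairs entering those (co)kernels to single cyclic generators, which is precisely what forces the two summands of the answer to have orders $2^{\min(r+1,r')}$ and $2^{\min(r,t')+1}$.
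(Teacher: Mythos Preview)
Your proposal is correct and follows essentially the same route as the paper: the same cofiber sequences are fed into the same functors in each of the seven parts, $\tilde{\vartheta}$ is built from the same ladder on \textbf{Cof2} with the same verifications of (\ref{eq:vartheta-def}) and (\ref{eq:2vartheta}), and part (7) is handled via the identical comparison map $g=(i_M,i_{\oc})\colon \m{r'}{n}\vee C^{n+2,t'}\to C^{n+2,t'}_{r'}$ together with the $3\times 3$-lemma argument on the diagram induced by \textbf{Cof1} for $C^{n+2}_r$. The only slip is notational: in part (7) your target should be $C'=C^{n+2,t'}_{r'}$, not $\CC$.
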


\begin{lemma}\label{lem:eqqi=iq}
  There holds $q_{\uc}i_{\oc} = i_\eta q_\eta$; i.e., there is a homotopy commutative square
  \[\begin{tikzcd}[sep=scriptsize]
    C^{n+2,t}\ar[r,"i_{\oc}"]\ar[d,"q_\eta"]&\CC\ar[d,"q_{\uc}"]\\
    \ceta\ar[r,"i_\eta"]&C^{n+2}_r
  \end{tikzcd}\]
\begin{proof}
For simplicity denote $\beta=i_n\bar{\zeta}q_\eta$ if $t\geq r$; otherwise $\beta=i_\eta\tilde{\zeta}q_{n+2}$.   By Proposition \ref{prop:CC} (\ref{oc-uc}) we may put  
\[q_{\uc}i_{\oc} =x\cdot i_\eta q_\eta+y\cdot \beta\]
for some $x\in\zz{\max(r,t)+1},y\in\zz{\min(r,t)}$. 

If $t\geq r$, by composing $i_n$ on both sides from the right, we have 
   \[i_n= (x+2y)\cdot i_n.\]
   By composing  $\tilde{\zeta}q_{n+2}$ from the left, we have  
   \[\tilde{\zeta}q_{n+2}= x\cdot \tilde{\zeta}q_{n+2}+0.\] 
   Thus $x=1,y=0$, and therefore $q_{\uc}i_{\oc}=i_\eta q_\eta$ in this case.

The proof of the formula in the case $t<r$ is similar.
\end{proof}
\end{lemma}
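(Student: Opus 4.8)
The plan is to pin down $q_{\uc}i_{\oc}$ inside the group $[\oc,\uc]=[C^{n+2,t},C^{n+2}_r]$, whose structure and generators are recorded in Proposition~\ref{prop:CC}(\ref{oc-uc}). As in the statement, write $\beta=i_n\bar\zeta q_\eta$ when $t\geq r$ and $\beta=i_\eta\tilde\zeta q_{n+2}$ when $t<r$, so that $q_{\uc}i_{\oc}=x\cdot i_\eta q_\eta+y\cdot\beta$ for suitable $x\in\zz{\max(r,t)+1}$ and $y\in\zz{\min(r,t)}$; the goal is to show $x=1$ and $y=0$, i.e.\ that $q_{\uc}i_{\oc}$ is the ``collapse then include'' map $i_\eta q_\eta$.

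The first step is to restrict to the bottom cell. Precomposing with $i_n\colon S^n\to C^{n+2,t}$, the canonical inclusion $i_{\oc}$ and the canonical collapse $q_{\uc}$ each send the bottom sphere to the bottom sphere (immediate from \textbf{Cof6} and from the definition of $q_{\uc}$), so $q_{\uc}i_{\oc}i_n=i_n$ in $[S^n,C^{n+2}_r]\cong\zz{r}\langle i_n\rangle$; on the other side one computes $x\cdot i_\eta q_\eta i_n+y\cdot\beta i_n$ using $q_\eta i_n=i_n$, $i_\eta i_n=i_n$, and $\bar\zeta i_n=2\cdot 1_n$ from (\ref{eq:zeta-def}) (respectively $q_{n+2}i_n=0$, since $\pi_n(S^{n+2})=0$, when $t<r$). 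This produces a first congruence between $x$ and $y$. The second step is to co-restrict to the top cell: postcompose with $\tilde\zeta q_{n+2}\colon C^{n+2}_r\to\ceta$ and trace the canonical projections, using that $q_{n+2}q_{\uc}i_{\oc}$ equals the canonical $q_{n+2}\colon C^{n+2,t}\to S^{n+2}$, that $q_{n+2}i_\eta=q_{n+2}$, and that $q_{n+2}\tilde\zeta=2\cdot 1_{n+2}$ by (\ref{eq:zeta-def}); since $q_{n+2}i_n=0$ the contribution of $\beta$ drops out and one is left with a second congruence forcing the coefficient of $i_\eta q_\eta$ to be a unit. Combining the two congruences with the case hypothesis ($t\geq r$ or $t<r$) then yields $x=1$ and $y=0$; the case $t<r$ runs exactly as $t\geq r$, with $i_n\bar\zeta q_\eta$ replaced by $i_\eta\tilde\zeta q_{n+2}$ and the $\bar\zeta$-relations replaced by the $\tilde\zeta$-relations of (\ref{eq:zeta-def}).

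I expect the main obstacle to be the book-keeping of the ``cell-preservation'' identities: that composites of canonical inclusions and projections, once restricted to or projected onto the distinguished spheres, reduce to the corresponding canonical maps. These are all routine consequences of the cofiber sequences of Section~\ref{sec:Cofs} and of the relations (\ref{eq:eta}), (\ref{eq:B}), (\ref{eq:zeta-def}), but one must check that the two probes above (bottom-cell restriction and top-cell co-restriction) really do eliminate the residual $2$-torsion ambiguity in the pair $(x,y)$. Conceptually there is no ambiguity at all, since $q_{\uc}i_{\oc}$ is visibly the map that includes $C^{n+2,t}$ into $\CC$ and then collapses the bottom $S^{n+1}$, which on cellular models is literally ``collapse the bottom $S^{n+1}$ of $C^{n+2,t}$, then include'' $=i_\eta q_\eta$; so an alternative to the above is to fix cellular representatives of the four canonical maps and check that the square commutes on the nose.
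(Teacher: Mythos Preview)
Your approach is exactly the paper's: express $q_{\uc}i_{\oc}$ in the generating set of $[C^{n+2,t},C^{n+2}_r]$ provided by Proposition~\ref{prop:CC}(\ref{oc-uc}) as $x\cdot i_\eta q_\eta+y\cdot\beta$, then precompose with $i_n$ and postcompose with $\tilde{\zeta}q_{n+2}$ to pin down $x$ and $y$, treating the case $t<r$ dually. Your alternative of checking the square on explicit cellular representatives is sound but is not the route the paper takes.
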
   

The last group we need to compute is $[C^{n+2,t}_{r},C^{n+2,t'}_{r'}]$, considering the page layout we give an alternative method to get its generators in terms of its group structure given by \cite{Baues85}.

\begin{lemma}\label{lem:ES}
    Let $A,B,C$ be finitely generated abelian groups. Then the short exact sequence
      \begin{equation}\label{ES}
        0\to A\xra{f}B\xra{g} C\to 0.\tag{$\diamond$}
      \end{equation}
      splits if and only if $B\cong A\oplus C$.

      \begin{proof}
        The ``only if" part is clear. For the ``if" part, recall that the exact sequence (\ref{ES}) splits if and only if the sequence
        \begin{equation}
          0\to \Hom(C,A)\xra{g^*} \Hom (B,A)\xra{f^*} \Hom(A,A)\to 0.\tag{$\diamond'$}\label{ES:hom}
        \end{equation} 
        is exact; or equivalently, the homomorphism $f^\ast$ is an epimorphism.

        Firstly we assume that $A,B,C$ are $\Z$-modules of finite lengths.
        Let $l(M)$ denote the length of an $R$-module $M$. For the exact sequence  (\ref{ES}), there holds $l(B)=l(A)+l(C)$. Then
       \begin{align*}
       l(\Hom(B,A))&=l(\Hom(C,A))+l(\Hom(A,A));\\
       &=  l(\Hom(C,A))+l(\im(f^*))
       \end{align*}
      It follows that $l(\im(f^*))=l(\Hom(A,A))$, $\im(f^*)=\Hom(A,A)$, and hence
     the exact sequence  (\ref{ES}) splits.

    For general finitely abelian groups, note that the sequence (\ref{ES:hom}) is exact if and only if it is exact after localizing at arbitrary prime $p$. Note also that the ring $\Z_p$ of $p$-adic integers are \textit{faithfully flat}, which means it is flat and the tensor functor $-\otimes \Z_p\colon \mathsf{Ab}\to\mathsf{Ab}$ reflects exactness. It follows that the $p$-localized sequence of (\ref{ES:hom}) is exact if and only if it is exact after $p$-completion. Since $\Z_p$ is the inverse limit $\varprojlim \Z/p^n$, and the functor $\Hom(-,-)$ commutes with flat base change, it suffices to show the exactness of the $\zp{n}$-tensored sequence 
        \[
         \begin{tikzcd}[column sep=small]
            \Hom(C,A)\otimes\zp{n}\ar[r,tail] &\Hom (B,A)\otimes\zp{n}\ar[r,two heads]& \Hom(A,A)\otimes\zp{n}.
          \end{tikzcd}
        \] 
   For this we may assume that the lengths are finite, which we have dealt with.
 \end{proof}
\end{lemma}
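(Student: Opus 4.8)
The plan is to establish the nontrivial ``if'' direction, the ``only if'' part being immediate from the definition of a split extension; so assume throughout that $B\cong A\oplus C$ as abstract groups, and let us show that the sequence \eqref{ES} splits. The first move is the standard homological reformulation. Applying $\Hom(-,A)$ to \eqref{ES} yields the left-exact sequence
\[
0\to\Hom(C,A)\xra{g^*}\Hom(B,A)\xra{f^*}\Hom(A,A),
\]
and \eqref{ES} splits exactly when $1_A\in\im(f^*)$, i.e.\ when a retraction $B\to A$ exists. Moreover this is equivalent to $f^*$ being surjective, since a splitting of \eqref{ES} identifies $f^*$ with the projection $\Hom(A,A)\oplus\Hom(C,A)\to\Hom(A,A)$. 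Thus the goal reduces to proving that $f^*$ is an epimorphism.

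I would first dispose of the case where $A,B,C$ are finite by a length count. Writing $\ell(G)$ for the number of prime factors of $|G|$ counted with multiplicity, exactness of \eqref{ES} gives $\ell(B)=\ell(A)+\ell(C)$, while the short exact sequence $0\to\Hom(C,A)\xra{g^*}\Hom(B,A)\to\im(f^*)\to 0$ gives $\ell(\Hom(B,A))=\ell(\Hom(C,A))+\ell(\im f^*)$. On the other hand $B\cong A\oplus C$ forces $\Hom(B,A)\cong\Hom(A,A)\oplus\Hom(C,A)$, hence $\ell(\Hom(B,A))=\ell(\Hom(A,A))+\ell(\Hom(C,A))$. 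Comparing the two identities yields $\ell(\im f^*)=\ell(\Hom(A,A))$, and since $\im f^*$ is a subgroup of the finite group $\Hom(A,A)$ of equal length it is all of it; thus $f^*$ is surjective and \eqref{ES} splits.

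It then remains to reduce a general finitely generated \eqref{ES} to this finite case, and I would do so in two steps. Step one: peel off the free part of $C$. Since $C/T(C)$ is free, hence projective, the composite $B\xra{g}C\twoheadrightarrow C/T(C)$ admits a section, so $B\cong B'\oplus(C/T(C))$ with $B'=g^{-1}(T(C))$, and \eqref{ES} becomes the direct sum of a trivial extension with $0\to A\to B'\to T(C)\to 0$; cancelling the finitely generated free summand $C/T(C)$ (legitimate by comparison of ranks and torsion subgroups) gives $B'\cong A\oplus T(C)$, so we may assume $C$ finite. Step two: fix $A=F\oplus T$ with $F$ free and $T=T(A)$ finite, and split the extension class of \eqref{ES} as $(e_1,e_2)\in\Ext^1(C,F)\oplus\Ext^1(C,T)$, the two summands being the classes of the pushouts $0\to F\to B/f(T)\to C\to 0$ and $0\to T\to B/f(F)\to C\to 0$ of \eqref{ES} along the projections $A\twoheadrightarrow F$ and $A\twoheadrightarrow T$. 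For the second, $B/f(F)\cong T(B)\cong T(A)\oplus T(C)=T\oplus C$ is finite, so $e_2=0$ by the finite case just proved. For the first, the torsion subgroup of $B/f(T)$ is $T(B)/f(T)$, of order $|T(B)|/|T|=|C|$, the maximal torsion an extension of the finite group $C$ by a free group can carry; lifting $C$ isomorphically into this torsion subgroup gives a splitting, so $e_1=0$. Hence the class of \eqref{ES} vanishes and it splits.

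The main obstacle, I expect, is making the last paragraph watertight: one must verify that $B\cong A\oplus C$ genuinely propagates to $B'\cong A\oplus T(C)$ and then to the asserted isomorphism types and torsion orders of the pushout middle terms $B/f(F)$ and $B/f(T)$, and one needs two elementary auxiliary facts worth isolating as lemmas — that a finitely generated free summand of a finitely generated abelian group may be cancelled, and that an extension of a finite group by a finitely generated free group splits as soon as its middle term carries the largest possible torsion subgroup. (Alternatively one can reduce to the finite case by passing to $p$-adic completions, but that route requires more care with flatness.)
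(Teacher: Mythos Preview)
Your proof is correct and shares its opening moves with the paper: both reformulate splitting as surjectivity of $f^*\colon\Hom(B,A)\to\Hom(A,A)$, and both settle the finite case by the same length count. The divergence is in the reduction from general finitely generated groups to finite ones. The paper takes exactly the route you flag parenthetically at the end---it localizes at each prime, passes to $p$-adic completion via faithful flatness of $\Z_p$, and then tensors with $\Z/p^n$ to land in the finite-length case---whereas you work directly with the structure theorem: first strip the free part of $C$ using projectivity, then decompose the extension class along $A=F\oplus T$ and kill each component separately (the $\Ext^1(C,T)$ piece by the finite case already in hand, the $\Ext^1(C,F)$ piece by the torsion-order argument). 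Your approach is more elementary and self-contained, avoiding completion and flat base change entirely; the paper's approach is terser once one grants the commutative-algebra facts, but those facts (in particular that $\Hom$ commutes with the relevant base changes for finitely generated modules) carry roughly the same weight as the structure-theorem manipulations you do by hand. The two auxiliary lemmas you propose to isolate are indeed the load-bearing points of your reduction, and both are straightforward; your justification that $B/f(F)\cong T(B)$ (via $T(B)\cap f(F)=0$ and a cardinality count) is the one step worth writing out fully, since it is what makes the finite case applicable to $e_2$.
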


Let $j=\max(t,r'),k=\min(t,r'),m=\min(r,t'),n''=\min(t,t'+1),l'=\min(r+1,r')$. From \cite{Baues85} we have 
\begin{equation}
  [C^{n+2,t}_{r},C^{n+2,t'}_{r'}]\cong \zz{m+1}\oplus \left\{\begin{array}{ll}
   \zz{j+1}\oplus\zz{k},&r'\leq r\wedge t'\geq t;\\
   \zz{n''}\oplus\zz{l'},&r'>r\vee t'<t.
 \end{array}\right. \tag{$\star$}\label{grp:CC}
 \end{equation}

\begin{proposition}\label{prop:CC'}
Let $j,m,n'',l'$ be as above and let $G=[\CC,C^{n+2,t'}_{r'}]$. 
\begin{enumerate}
  \item If $r'>r$ or $ t'<t$,
   \[G\cong \zz{m+1}\langle i_{n+1}q_{n+1}\rangle\oplus \zz{l'}\langle i_{M'}B(\chi^{r+1}_{r'})\bar{\xi}_{r}q_{\uc}\rangle\oplus \zz{n''}\langle i_{\oc'}\tilde{\xi}_{t'}B(\chi^t_{t'+1})q_M\rangle.\]
  \item If $r'\leq r$ and $t\leq t'$, 
 \[ G\cong \zz{m+1}\langle i_{n+1}q_{n+1}\rangle\oplus\zz{j+1}\langle L(\chi)\rangle \oplus \zz{k}\langle \omega^t_{r'}\rangle,\]
 where $\omega^t_{r'}=\left\{\begin{array}{ll}
   i_{M'}B(\chi^{r+1}_{r'})\bar{\xi}_r q_{\uc}&\text{ if }t\geq r'\\
   i_{\oc'}\tilde{\xi}_{t'} B(\chi^t_{t'+1})q_M&\text{ if }t<r'
 \end{array}\right.$,
 $L(\chi)$ simultaneously satisfies the formulas:
    \begin{gather}\label{eq:CC-def}
      \begin{aligned}
        L(\chi) i_{\oc}=i_{\oc'}\tilde{\vartheta}^t_{t'},\quad& q_{\uc'}L(\chi)=\bar{\vartheta}q_{\uc};\\
     L(\chi)i_M=i_{M'}B(\chi^r_{r'}),&\quad q_{M'}L(\chi)=B(\chi^t_{t'})q_M.  
     \end{aligned}
    \end{gather}
\end{enumerate}

  \begin{proof}
 Write $C=\CC,C'=C^{n+2,t'}_{r'}$ for short. 
 Applying Lemma \ref{lem:ES} to the group structure (\ref{grp:CC}), we have the following splitting short exact sequences:
   \begin{align*}
    t'<t:&\quad 0\to [C^{n+2}_r,C']\xra{q_{\uc}^*}[C,C']\xra{i_{n+1}^*}\zz{t'+1}\langle i_{n+1}\rangle\to 0;\\
     t'\geq t:&\quad 0\to \zz{m+1}\langle i_{n+1}\rangle\xra{q_{n+1}^*}[C,C']\xra{i_{\oc}^*}[C^{n+2,t},C']\to 0.
   \end{align*} 

(1) If $t'<t$, $[C^{n+2}_r,C']\cong \zz{l'}\langle i_{M'}B(\chi^{r+1}_{r'})\bar{\xi}_r \rangle\oplus \zz{m+1}\langle i_{n+1}q_{n+1}\rangle$, it suffices to choose a generator of the direct summand $\zz{t'+1}$ of $[C,C']$. 
By (\ref{eq:xi-def}), we check that 
\[i_{\oc'}\tilde{\xi}_{t'}B(\chi^t_{t'+1})q_M\circ i_{n+1}=i_{n+1},\] 
hence $i_{\oc'}\tilde{\xi}_{t'}B(\chi^t_{t'+1})q_M$ is a generator of $\zz{t'+1}$.

(2) If $t'\geq t$, it is clear that $i_{n+1}q_{n+1}$ is a generator of the direct sumand $\zz{m+1}$. Recall that 
\[
    [C^{n+2,t},C']\cong \zz{j+1}\langle i_{\oc'}\tilde{\vartheta}^t_{t'} \rangle\oplus\left\{
      \begin{array}{ll}
         \zz{t} \langle i_{\oc'}\tilde{\xi}_{t'} B(\chi^t_{t'+1})q_M\rangle & \hbox{$t'\geq t<r'$;} \\
         \zz{r'}\langle i_n\bar{\zeta}q_\eta  \rangle  &\hbox{$t'\geq t\geq r'$;}
      \end{array}\right.\]
Define $L(\chi)\in [C,C']$ by the equation
\[L(\chi)\circ i_{\oc}=i_{\oc'}\tilde{\vartheta}^t_{t'}.\]
Then $L(\chi)$ is a generator of the direct summand $\zz{j+1}$ of $[C,C']$.
By Lemma \ref{lem:eqqi=iq} and the dual formulas of (\ref{eq:xi-def}), 
  \[i_{M'}B(\chi^{r+1}_{r'})\bar{\xi}_{r} q_{\uc}\circ i_{\oc}= i_{M'}B(\chi^{r+1}_{r'})\bar{\xi}_{r} i_\eta q_\eta= i_{M'}B(\chi^{r+1}_{r'})i_n\bar{\zeta}q_\eta = i_n\bar{\zeta}q_\eta.\]
Thus $i_{M'}B(\chi^{r+1}_{r'})\bar{\xi}_{r} q_{\uc}\in [C,C']$ is a generator of the direct summand the direct summand $\zz{k}=\zz{r'}$.

Now we prove other formulas of (\ref{eq:CC-def}).
By the group $[\m{r}{n},C']$ we may put 
\[L(\chi) i_M=x\cdot i_{M'}B(\chi^r_{r'})+y\cdot i_{n+1}q_{n+1}\] for some $x\in\zz{r'},y\in\zz{\min(r,t'+1)}$.
 Note that $L(\chi)i_Mi_n= \tilde{\vartheta}^t_{t'} i_n= i_n$, we get  $x=1$, and hence 
 \begin{equation}\label{eq:CC-iM}
   L(\chi)i_M=i_{M'}B(\chi^r_{r'})+y\cdot i_{n+1}q_{n+1}.
 \end{equation}
By the group $[C,\m{t'}{n+1}]$ we may put 
\[q_{M'}L(\chi)=x'\cdot B(\chi^{t}_{t'})q_M+y'\cdot i_{n+1}q_{n+1} \]
for some $x'\in\zz{n},y'\in\zz{m'}$.
By composing $i_{\oc}$ from the right, and by (\ref{eq:vartheta-def}), we have 
\begin{align*}
  B(\chi^t_{t'})q_M=q_{M'}\tilde{\vartheta}^t_{t'}&=q_{M'}\circ i_{\oc'}\tilde{\vartheta}^t_{t'}
  =q_{M'}\circ L(\chi) i_{\oc}\\
  &=x'\cdot B(\chi^t_{t'})q_M\circ i_{\oc}=x'\cdot B(\chi^t_{t'})q_M.
\end{align*}
Hence $x'=1$, and $q_{M'}L(\chi)= B(\chi^t_{t'})q_M+y'\cdot i_{n+1}q_{n+1}.$

Composing $i_M$ from the right, and by (\ref{eq:CC-iM}), we get $y=y'$. 
If $y\neq 0$, substituting $L(\chi)$ by $L(\chi)-y\cdot i_{n+1}q_{n+1}$, then we complete the proofs of the two formulas
\[L(\chi)i_M=i_{M'}B(\chi^r_{r'}),\quad q_{M'}L(\chi)= B(\chi^{t}_{t'})q_M.\]

By the group $[C,C^{n+2}_{r'}]$, there exist $x''\in\zz{j+1},y''\in\zz{k}$ such that  
\[q_{\uc'}L(\chi)=x''\cdot \bar{\vartheta}q_{\uc}+y''\cdot \left\{\begin{array}{ll}
  i_{M'}B(\chi^{r+1}_{r'})\bar{\xi}_{r}q_{\uc}, &t\geq r'\leq r;\\
  i_\eta\tilde{\zeta}q_{n+2},&t\geq r'>r.
\end{array} \right.\]
Composing $i_{\oc'}$ from the right and by (\ref{eq:vartheta}), we have 
\[
  i_\eta q_\eta=x''\cdot i_\eta q_\eta+y''\cdot \left\{\begin{array}{ll}
    i_n\bar{\zeta}q_\eta,&t\geq r'\leq r;\\
    i_\eta \tilde{\zeta}q_{n+2},&t\geq r'>r.
  \end{array} \right.
\]
Note that these composites are generators of $[C^{n+2,t},C^{n+2}_{r'}]$, we have $x''=1,y''=0$, and hence $q_{\uc'}L(\chi)=\bar{\vartheta}^r_{r'}q_{\uc}$ holds.
  \end{proof}    
\end{proposition}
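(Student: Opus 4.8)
The plan is to feed Baues's abstract computation $(\ref{grp:CC})$ of $G=[\CC,C^{n+2,t'}_{r'}]$ into the cofiber sequences \textbf{Cof5} and \textbf{Cof6} for $\CC$, using the splitting criterion of Lemma \ref{lem:ES} to turn the relevant portions of the induced exact sequences into split short exact sequences with already-computed outer terms, and then to lift generators. Write $C=\CC$ and $C'=C^{n+2,t'}_{r'}$. Applying $[-,C']$ to \textbf{Cof5} produces $\pi_{n+2}(C')\xra{(2^tq_{n+2})^\ast}[C^{n+2}_{r},C']\xra{q_{\uc}^\ast}[C,C']\xra{i_{n+1}^\ast}\pi_{n+1}(C')$, and to \textbf{Cof6} produces $\pi_{n+1}(C')\xra{q_{n+1}^\ast}[C,C']\xra{i_{\oc}^\ast}[C^{n+2,t},C']\xra{(i_n2^r)^\ast}\pi_n(C')$; here $[C^{n+2}_{r},C']$ and $[C^{n+2,t},C']$ come from Proposition \ref{prop:CC}(7),(5) and $\pi_{n+1}(C')\cong\zz{t'+1}$, $\pi_n(C')\cong\zz{r'}$ from Proposition \ref{prop:X=spheres}.

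In case (1) I would use \textbf{Cof5}. The map $(2^tq_{n+2})^\ast$ vanishes, for it sends $f\mapsto 2^t\cdot(fq_{n+2})$ and $\pi_{n+2}(C')$ — hence $fq_{n+2}$ — is annihilated by $2$; thus $q_{\uc}^\ast$ is injective and its cokernel is a cyclic quotient of $\zz{t'+1}$, which comparing orders against $(\ref{grp:CC})$ and $[C^{n+2}_{r},C']\cong\zz{l'}\oplus\zz{m+1}$ forces to be $\zz{n''}$ (all of $\zz{t'+1}$ exactly when $t'<t$). Since $(\ref{grp:CC})$ displays $[C,C']$ as $\zz{l'}\oplus\zz{m+1}\oplus\zz{n''}$, Lemma \ref{lem:ES} splits $0\to[C^{n+2}_{r},C']\xra{q_{\uc}^\ast}[C,C']\xra{i_{n+1}^\ast}\zz{n''}\to 0$. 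In case (2) I would use \textbf{Cof6}: when $r'\le r$, $(\ref{eq:zeta-def})$ and $(\ref{eq:B})$ let one check that $(i_n2^r)^\ast$ kills each generator of $[C^{n+2,t},C']$ listed in Proposition \ref{prop:CC}(5), so $i_{\oc}^\ast$ is surjective, and a count against $(\ref{grp:CC})$ identifies $\ker i_{\oc}^\ast=\im q_{n+1}^\ast$ with $\zz{m+1}\langle i_{n+1}q_{n+1}\rangle$; Lemma \ref{lem:ES} then splits $0\to\zz{m+1}\langle i_{n+1}q_{n+1}\rangle\to[C,C']\xra{i_{\oc}^\ast}[C^{n+2,t},C']\to 0$.

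It then remains to name the generators. In the injected summands these are $q_{\uc}^\ast$ or $q_{n+1}^\ast$ applied to the known generators of $[C^{n+2}_{r},C']$ (Proposition \ref{prop:CC}(7)) and to $i_{n+1}$, namely $i_{M'}B(\chi)\bar\xi q_{\uc}$ and $i_{n+1}q_{n+1}$. For the quotient summands I would exhibit lifts and test their restrictions: $i_{\oc}\tilde\xi B(\chi)q_M\circ i_{n+1}$ equals, by $(\ref{eq:xi-def})$, $q_Mi_{n+1}=i_{n+1}$ and $(\ref{eq:B})$, a generator of $\im i_{n+1}^\ast\cong\zz{n''}$; and in case (2) I would \emph{define} $L(\chi)\in[C,C']$ by the single equation $L(\chi)i_{\oc}=i_{\ol{C'}}\tilde\vartheta$ (legitimate since $i_{\oc}^\ast$ is onto), set $\omega^t_{r'}=i_{M'}B(\chi)\bar\xi q_{\uc}$ if $t\ge r'$ and $=i_{\ol{C'}}\tilde\xi B(\chi)q_M$ if $t<r'$, and use Lemma \ref{lem:eqqi=iq} (to rewrite $q_{\uc}i_{\oc}$ as $i_\eta q_\eta$), $(\ref{eq:xi-def})$, $(\ref{eq:xi'-def})$ and $(\ref{eq:zeta-def})$ to check that $i_{\oc}^\ast$ carries $L(\chi)$ and $\omega^t_{r'}$ onto generators of the summands $\zz{j+1}$ and $\zz{k}$ of $[C^{n+2,t},C']$.

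The step I expect to be the main obstacle is promoting the single relation $L(\chi)i_{\oc}=i_{\ol{C'}}\tilde\vartheta$ to the full list $(\ref{eq:CC-def})$, since $L(\chi)$ is determined only modulo $\ker i_{\oc}^\ast=\langle i_{n+1}q_{n+1}\rangle$. I would expand $L(\chi)i_M$, $q_{M'}L(\chi)$ and $q_{\ul{C'}}L(\chi)$ in the already-determined groups $[\m{r}{n},C']$ (Proposition \ref{prop:MC}), $[C,\m{t'}{n+1}]$ and $[C,C^{n+2}_{r'}]$ (entries of Tables \ref{table1}, \ref{table2}), force the coefficient of the principal term ($i_{M'}B(\chi)$, $B(\chi)q_M$, $\bar\vartheta q_{\uc}$ respectively) to be $1$ by composing with $i_n$ or $i_{\oc}$ and invoking $(\ref{eq:vartheta-def})$, Theorem \ref{thm:eqs}(\ref{eq:vartheta}) and Lemma \ref{lem:eqqi=iq}, and then kill the residual $i_{n+1}q_{n+1}$-terms in $L(\chi)i_M$ and $q_{M'}L(\chi)$ by one replacement $L(\chi)\mapsto L(\chi)-c\cdot i_{n+1}q_{n+1}$. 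The delicate part is the compatibility: evaluating $q_{M'}L(\chi)i_M$ in two ways — via $q_{n+1}i_M=q_{n+1}$, $q_{M'}i_{M'}=0$, $q_Mi_M=0$ — shows that the two residual coefficients agree modulo the order of $i_{n+1}q_{n+1}$ in $[\m{r}{n},\m{t'}{n+1}]$, so a single suitably chosen $c$ (taking the coefficient from the smaller ambient cyclic group when the two orders differ) eliminates both, while $L(\chi)i_{\oc}$ and $q_{\ul{C'}}L(\chi)$ are left untouched because $i_{n+1}q_{n+1}i_{\oc}=0$ and $q_{\ul{C'}}i_{n+1}=0$; the last relation $q_{\ul{C'}}L(\chi)=\bar\vartheta q_{\uc}$ is then forced by composing with $i_{\oc}$ and comparing in $[C^{n+2,t},C^{n+2}_{r'}]$. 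Everything else is routine bookkeeping with the relevant cyclic orders.
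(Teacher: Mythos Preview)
Your proposal is correct and follows essentially the same route as the paper: split the long exact sequences coming from \textbf{Cof5} and \textbf{Cof6} via Lemma~\ref{lem:ES} and the known group $(\ref{grp:CC})$, read off the injected generators, lift generators for the quotient summands (in particular defining $L(\chi)$ by $L(\chi)i_{\oc}=i_{\ol{C'}}\tilde\vartheta$), and then upgrade to the full list $(\ref{eq:CC-def})$ by expanding in the already-computed groups, pinning the leading coefficients via composition with $i_n$ or $i_{\oc}$, and adjusting by a multiple of $i_{n+1}q_{n+1}$. Your treatment is in fact slightly more careful than the paper's in two places---you handle the subcase $r'>r,\ t'\ge t$ of~(1) explicitly via \textbf{Cof5}, and you justify why a \emph{single} correction $c$ suffices to kill both residual $i_{n+1}q_{n+1}$-terms---but the overall argument is the same.
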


\begin{proof}[Proof of Theorem \ref{thm:eqs}]\label{proof:thm-eqs}
The items $(3)\sim (7)$ of Theorem \ref{thm:eqs} summarize the relation formulas (\ref{eq:xi-def})$\sim$(\ref{eq:vartheta-def}), (\ref{eq:CC-def}).
\end{proof}

\section{Self-homotopy equivalences of Chang complexes}\label{sec:htpEq}
In this section we prove the theorems and corollaries listed in Section \ref{sec:intro}. The following lemma is well-known. 
\begin{lemma}[cf. \cite{ZP17}]\label{lem:Sq2}
  For every (indecomposable) Chang complex $X$, the Steenrod square \[\sq^2\colon H^n(X;\zz{})\to H^{n+2}(X;\zz{})\] is an isomorphism.
\end{lemma}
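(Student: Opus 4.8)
The plan is to reduce to the four elementary Chang-complexes and then propagate the assertion along the cofiber sequences of Section~\ref{sec:Cofs}, starting from the two-cell complex $\ceta$.

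A Chang-complex is by definition a finite wedge $X=\bigvee_i C_i$ of elementary Chang-complexes, so $H^m(\bigvee_i C_i;\zz{})\cong\bigoplus_i H^m(C_i;\zz{})$ naturally, and since $\sq^2$ is additive it follows that $\sq^2_X=\bigoplus_i\sq^2_{C_i}$ under this splitting. Hence it suffices to prove the lemma when $X$ is one of $\ceta$, $\oc=C^{n+2,t}$, $\uc=C^{n+2}_r$, $C=\CC$. For each of these, the mod-$2$ cellular boundary maps all vanish (the attaching maps involve only $\eta$, which acts by zero on integral chains, and multiplications by powers of $2$), so $H^m(X;\zz{})$ is a free $\zz{}$-module with one generator per $m$-cell of $X$; in particular $H^n(X;\zz{})\cong H^{n+2}(X;\zz{})\cong\zz{}$ for all four spaces, and $\sq^2\colon H^n(X;\zz{})\to H^{n+2}(X;\zz{})$ is a homomorphism $\zz{}\to\zz{}$, an isomorphism precisely when nonzero.

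The base case $X=\ceta=S^n\cup_\eta\C S^{n+1}$ is classical: in the stable range $\ceta\simeq\Sigma^{n-2}\mathbb{CP}^2$, and $\sq^2$, being a stable operation, corresponds under the suspension isomorphism to the map $x\mapsto x^2$ on $H^2(\mathbb{CP}^2;\zz{})$, which is nonzero; so $\sq^2$ is an isomorphism on $\ceta$. For the other three I would run naturality of $\sq^2$ against the structural maps coming from the cofiber sequences of Section~\ref{sec:Cofs}: \textbf{Cof3} for $\oc$ gives $q_\eta\colon\oc\to\ceta$, \textbf{Cof3} for $\uc$ gives $i_\eta\colon\ceta\to\uc$, and \textbf{Cof5} for $C$ gives $q_{\uc}\colon C\to\uc$. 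A routine analysis of the long exact sequences in mod-$2$ cohomology induced by these cofiber sequences --- using that a sphere has reduced cohomology concentrated in one degree and that multiplication by $2^r$ or $2^t$ vanishes mod $2$, and extending the Puppe sequences one step where needed --- shows that $q_\eta^*$, $i_\eta^*$ and $q_{\uc}^*$ are all isomorphisms on $H^n(-;\zz{})$ and on $H^{n+2}(-;\zz{})$ (in each such degree the relevant map is an injection or a surjection between copies of $\zz{}$). Naturality of $\sq^2$ then transports the isomorphism from $\ceta$ to $\oc$ via $q_\eta^*$, from $\ceta$ to $\uc$ via $i_\eta^*$, and finally from $\uc$ to $C$ via $q_{\uc}^*$, which covers all four cases.

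I do not expect a genuine obstacle: the argument is a routine diagram chase, and the statement is in fact already recorded on page~301 of \cite{ZP17}. The one point that requires care is the bookkeeping --- verifying that each of the three structural maps really induces an isomorphism on $H^n(-;\zz{})$ and on $H^{n+2}(-;\zz{})$ (its behaviour in degree $n+1$, where the groups genuinely differ, is irrelevant) --- which reduces to recognizing the connecting maps of the relevant Puppe sequences as multiples of $2$, hence zero mod $2$.
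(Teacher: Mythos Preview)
Your argument is correct. The reduction to the four elementary Chang-complexes via the wedge decomposition and additivity of $\sq^2$ is the natural first step; the identification $\ceta\simeq\Sigma^{n-2}\mathbb{CP}^2$ handles the base case; and the naturality squares for $q_\eta$, $i_\eta$, $q_{\uc}$ do propagate the isomorphism as you claim. The one spot that could use a line of justification is the vanishing of the relevant connecting maps in degree $n+1$: for $q_\eta$ and $q_{\uc}$ this follows because $i_{n+1}^*$ is surjective on $H^{n+1}(-;\zz{})$ (the $(n+1)$-sphere is a subcomplex), and for $i_\eta$ the map $(i_n2^r)^*$ vanishes mod $2$ --- exactly the ``multiples of $2$'' remark you make at the end.

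As for comparison: the paper does not supply its own proof of this lemma at all; it simply records the statement with the citation to \cite{ZP17}. So your proposal is not an alternative to the paper's argument but rather fills in what the paper omits. Your approach (bootstrap from $\ceta$ along the structural cofiber sequences) is essentially the standard way to verify such a statement and matches what one would expect the cited reference to do, perhaps stated there directly in terms of cell structures and attaching maps rather than via the three-step propagation.
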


Let $\sigma_0=1\in H_0(S^0)$ be a fixed generator of the reduced $0$-dimensional homology group and let $\sigma_n\in H_n(S^n)$ be the image of $\sigma_0$ under the isomorphism $\Sigma^{n}\colon H_0(S^0)\xra{\cong}H_n(S^n)$. Denote by $\delta_{ij}$ the Kronecker delta. The homology groups of $\CC$ are given by  
\begin{equation}\label{eq:homology}
  H_i(\CC)\cong \left\{\begin{array}{ll}
  \zz{r}\langle (i_n)_\ast\sigma_n\rangle,&i=n;\\
  \zz{t}\langle (i_{n+1})_\ast\sigma_{n+1}\rangle,&i=n+1\\
  0&\text{otherwise}.
  \end{array} \right.
\end{equation}

\subsection{Proof of Theorem \ref{thm:ECC}}\label{sec:ECC}

Denote $\omega^t_r=i_{M}B(\chi^{r+1}_{r})\bar{\xi}_rq_{\uc}$ if $t\geq r$, otherwise $\omega^t_r=
i_{\oc}\tilde{\xi}_tB(\chi^t_{t+1})q_M$. Let $C=\CC$ and let $j=\max(r,t)$, $m=\min(r,t)$. 
Recall that \[ [C,C]\cong \zz{j+1}\langle 1_C\rangle\oplus\zz{m+1}\langle i_{n+1}q_{n+1}\rangle \oplus\zz{m}\langle \omega^t_r\rangle,\] 
 for simplicity we denote a self-map $f$ of $C$ coordinately by   
 \[f=(x,y,z)=x\cdot 1_C+y\cdot i_{n+1}q_{n+1}+z\cdot \omega^t_r,\]
 where $x\in \zz{j+1},y\in\zz{m+1},z\in\zz{m}$. 

\begin{lemma}\label{lem:ECC-eqs}
  Using the above notation, there hold formulas: 
  \begin{enumerate}
    \item\label{eq:t>r} If $t\geq r$,  
    \begin{align*}
      (x,y,z)\circ i_n=(x+2z)\cdot i_n,&\quad (x,y,z)\circ i_{n+1}=x\cdot i_{n+1},\\
      q_{n+1}\circ  (x,y,z)&=(x+2z)\cdot q_{n+1},\\
       (x,y,z)\circ (i_MB(\chi^1_r)\tilde{\eta})&=(x+2z)\cdot i_MB(\chi^1_r)\tilde{\eta}+y\cdot i_{n+1}\eta.
    \end{align*}
    \item\label{eq:t<r} If $t<r$, 
    \begin{align*}
      (x,y,z)\circ i_n=x\cdot i_n,&\quad (x,y,z)\circ i_{n+1}=(x+2z)\cdot i_{n+1},\\
      q_{n+1}\circ (x,y,z)&=x\cdot q_{n+1},\\
      (x,y,z)\circ (i_MB(\chi^1_r)\tilde{\eta})&=x\cdot i_MB(\chi^1_r)\tilde{\eta}+y\cdot i_{n+1}\eta
    \end{align*}
  \end{enumerate} 
\begin{proof}
  Direct applications of relation formulas in Theorem \ref{thm:eqs}.
\end{proof}
\end{lemma}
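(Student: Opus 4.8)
The plan is to verify each of the eight displayed formulas by direct computation, expanding $f=(x,y,z)=x\cdot 1_C+y\cdot i_{n+1}q_{n+1}+z\cdot \omega^t_r$ linearly and then applying the relation formulas collected in Theorem~\ref{thm:eqs}, Lemma~\ref{lem:cofib}, and equations~(\ref{eq:eta}),~(\ref{eq:B}). The key observation driving everything is that $i_{n+1}q_{n+1}$ and $\omega^t_r$ act on the low-dimensional cells in a controlled way: $q_{n+1}\circ i_{n+1}q_{n+1}=q_{n+1}$ while $q_{n+1}\circ i_{n+1}=0$ (since $q_{n+1}i_{n+1}$ factors through the trivial composite in the relevant cofiber sequence), and $\omega^t_r$ is built from $i_M,B(\chi),\bar\xi,q_{\uc}$ or $i_{\oc},\tilde\xi,B(\chi),q_M$, whose behaviour on $i_n,i_{n+1}$ is pinned down by Theorem~\ref{thm:eqs}(\ref{eq:xi}),(\ref{eq:vartheta}) and Lemma~\ref{lem:eqqi=iq}.

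Concretely, for part~(\ref{eq:t>r}) (the case $t\geq r$, so $\omega^t_r=i_MB(\chi)\bar\xi q_{\uc}$), I would compute $\omega^t_r\circ i_n$ by first noting $q_{\uc}i_n=i_n\colon S^n\to C^{n+2}_r$ (the inclusion of the bottom cell commutes with $q_{\uc}$), then $\bar\xi i_n=2i_n$ by~(\ref{eq:xi'-def}), then $B(\chi)i_n=i_n$ and $i_Mi_n=i_n$ by~(\ref{eq:B}) and the Moore-space cofiber sequence; this gives $\omega^t_r\circ i_n=2i_n$, hence $(x,y,z)\circ i_n=x\cdot i_n+0+z\cdot 2i_n=(x+2z)i_n$, using $i_{n+1}q_{n+1}i_n=0$. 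Similarly $\omega^t_r\circ i_{n+1}=0$ because $q_{\uc}i_{n+1}=0$ (in $C^{n+2}_r$ there is no $(n+1)$-cell detected by $q_{\uc}$), giving $(x,y,z)\circ i_{n+1}=x\cdot i_{n+1}$. The composite $q_{n+1}\circ\omega^t_r$ is handled dually: $q_{n+1}i_M=q_{n+1}$, then $q_{n+1}B(\chi)=q_{n+1}$, then use $q_{n+1}\bar\xi=q_{n+1}$ and $q_{n+1}q_{\uc}=2q_{n+1}$ (from Lemma~\ref{lem:cofib}(2), since $\eta q_{n+2}=2^r q_{n+1}$ forces the extra factor of $2$ when $t\ge r$); combining with $q_{n+1}\circ i_{n+1}q_{n+1}=q_{n+1}$ and $q_{n+1}\circ 1_C=q_{n+1}$ yields $q_{n+1}\circ(x,y,z)=(x+2z)q_{n+1}$. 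The last formula in~(\ref{eq:t>r}), the action on $i_MB(\chi)\tilde\eta\colon S^{n+2}\to C$, is the one requiring the most care: I would use $q_{n+2}$-detection together with the relation $q_{n+1}\tilde\eta=\eta$ from~(\ref{eq:eta}), the fact that $\omega^t_r$ acts as multiplication by $2$ on the $S^n$-cell, and crucially that $i_{n+1}q_{n+1}\circ i_MB(\chi)\tilde\eta=i_{n+1}\eta$ (since $q_{n+1}i_MB(\chi)\tilde\eta=q_{n+1}\tilde\eta=\eta$, using $q_{n+1}i_M=q_{n+1}$, $q_{n+1}B(\chi)=q_{n+1}$); assembling these gives the claimed expression $(x+2z)\cdot i_MB(\chi)\tilde\eta+y\cdot i_{n+1}\eta$.

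Part~(\ref{eq:t<r}) is the mirror image: now $\omega^t_r=i_{\oc}\tilde\xi B(\chi)q_M$, so $\omega^t_r\circ i_n=0$ because $q_M i_n=0$ in $\m{t}{n+1}$, while $\omega^t_r\circ i_{n+1}=2i_{n+1}$ because $q_M i_{n+1}=i_{n+1}$, $B(\chi)i_{n+1}=i_{n+1}$ (appropriately, in the $r\ge r'$ regime of the Moore-space map), $\tilde\xi i_{n+1}=i_{n+1}$ by~(\ref{eq:xi-def}), and then $q_\eta\tilde\xi=\tilde\zeta q_{n+2}$ combined with $q_{n+2}\tilde\zeta=2\cdot 1_{n+2}$ supplies the factor of $2$ through $i_{\oc}$; hence the swap of roles between $i_n$ and $i_{n+1}$ compared to~(\ref{eq:t>r}). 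The remaining two formulas in~(\ref{eq:t<r}) follow by the same bookkeeping with the roles of $q_{n+1}$ and the $S^n$/$S^{n+1}$ cells interchanged, now using Lemma~\ref{lem:cofib}(1), $i_n\eta=2^t i_{n+1}$.

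The main obstacle I anticipate is not any single deep step but rather keeping the arithmetic of the cyclic-group indices consistent: every equality lives in a group of the form $\zz{c}$ with $c$ depending on $\min$/$\max$ of $r,t$ and shifts by $1$, and one must check that assertions like "$\omega^t_r$ acts as $2$ on a cell" actually make sense and are nonzero in the relevant quotient. In particular, verifying the coefficient $x+2z$ (as opposed to, say, $x$ or $x+z$) requires tracking precisely where the factor of $2$ in $\bar\zeta i_n=2\cdot 1_n$, $q_{n+2}\tilde\zeta=2\cdot 1_{n+2}$, and $\bar\xi i_n=2i_n$ enters, and confirming these do not collapse modulo the order of the ambient group. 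Since the lemma's own proof merely says ``Direct applications of relation formulas in Theorem~\ref{thm:eqs}'', I would present the four computations for $t\ge r$ in full and indicate that the $t<r$ case follows by the evident symmetry exchanging $(i_n,q_{n+1},\bar\xi,\bar\zeta)$ with $(i_{n+1},q_{n+2},\tilde\xi,\tilde\zeta)$ and Lemma~\ref{lem:cofib}(1) with~(2).
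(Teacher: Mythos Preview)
Your overall plan is exactly the paper's: expand $(x,y,z)$ linearly and push each generator through the relation formulas. That part is fine. But several of your intermediate identifications are wrong, and the correct final answers survive only because the errors happen to cancel.

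First, the ``key observation'' is self-contradictory. You write $q_{n+1}\circ i_{n+1}q_{n+1}=q_{n+1}$ and, in the same breath, $q_{n+1}\circ i_{n+1}=0$. The second statement is correct (from \textbf{Cof6} we have $q_{n+1}i_{\oc}=0$ and $i_{n+1}$ factors through $i_{\oc}$), and it forces $q_{n+1}\circ(i_{n+1}q_{n+1})=(q_{n+1}i_{n+1})q_{n+1}=0$. This is exactly what makes the $y$-term disappear in $q_{n+1}\circ(x,y,z)=(x+2z)q_{n+1}$; with your stated value you would get $(x+y+2z)q_{n+1}$.

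Second, for $t\ge r$ you misplace the factor of $2$ in $q_{n+1}\circ\omega^t_r$. The map $B(\chi)$ occurring in $\omega^t_r=i_MB(\chi)\bar\xi q_{\uc}$ goes $\m{r+1}{n}\to\m{r}{n}$, so by (\ref{eq:B}) one has $q_{n+1}B(\chi)=2q_{n+1}$, not $q_{n+1}$. On the other hand $q_{n+1}^{C^{n+2}_r}\circ q_{\uc}=q_{n+1}^C$ (both collapse precisely $C^{n+2,t}\subset C$; there is no extra factor and Lemma \ref{lem:cofib}(2) is irrelevant here). Thus
\[
q_{n+1}\circ\omega^t_r
= q_{n+1}i_M\cdot B(\chi)\bar\xi q_{\uc}
= q_{n+1}\cdot B(\chi)\bar\xi q_{\uc}
= 2\,q_{n+1}\bar\xi q_{\uc}
= 2\,q_{n+1}q_{\uc}
= 2\,q_{n+1},
\]
with the $2$ coming from $B(\chi)$, not from $q_{\uc}$.

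Third, the same misattribution occurs in your sketch for $t<r$. There $\omega^t_r=i_{\oc}\tilde\xi B(\chi)q_M$ with $B(\chi)\colon\m{t}{n+1}\to\m{t+1}{n+1}$, so by (\ref{eq:B}) one has $B(\chi)i_{n+1}=2i_{n+1}$, not $i_{n+1}$; then $\tilde\xi(2i_{n+1})=2i_{n+1}$ and $i_{\oc}(2i_{n+1})=2i_{n+1}$ directly. The relation $q_\eta\tilde\xi=\tilde\zeta q_{n+2}$ is not what produces the $2$.

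Fix these three points and your expanded proof goes through cleanly; the remaining computations ($\omega^t_r i_n=2i_n$, $\omega^t_r i_{n+1}=0$ via $q_{\uc}i_{n+1}=0$, and $i_{n+1}q_{n+1}\circ i_MB(\chi)\tilde\eta=i_{n+1}\eta$ via (\ref{eq:eta})) are correct as you wrote them.
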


\begin{lemma}\label{lem:ECC}
As a set, \[\E(C)=\left\{(x,y,z)\in \zz{j+1}\oplus\zz{m+1}\oplus\zz{m}:x\equiv 1\pmod 2\right\}.\]
In particular, $\E(C)$ has order $2^{t+r+\min(r,t)+1}$.

\begin{proof}
By \citep[Lemma 3.1]{ZLP}, $f=(x,y,z)\in\E(C)$ if and only if $f$ induces an automorphism on $H_n(C;\zz{})$. 
Using $\zz{}$ coefficients, Lemma \ref{lem:ECC-eqs} implies that 
\[(x,y,z)_\ast\big((i_n)_\ast\sigma_n\big)=x\cdot (i_n)_\ast\sigma_n.\] 
Hence $f=(x,y,z)\in\E(C)$ if and only if $x\equiv 1\pmod 2$. 

The order of $\E(C)$ follows immediately.
\end{proof}
\end{lemma}

\begin{proof}[Proof of Theorem \ref{thm:ECC}]
Write $C=\CC$ for short. It suffices to show that the above map $\pi$ is an epimorphism and admits a section.
Given a self-map $f$ of $C$, the induced endomorphism $\pi^{n+1}(f)$ on $  \pi^{n+1}(C)$ is a multiplication by certain integer $k_f$. It follws that the map 
\[\phi\colon \E(C)\to \Aut(\pi_{n+1}(C))\oplus\Aut(\pi^{n+1}(C))\] defined in the theorem is a homomorphism of groups. 

 By Theorem \ref{thm:eqs} we have the composition laws in $[C,C]$: 
 \begin{equation*}
  (x,y,z)\circ (x',y',z')=\left\{\begin{array}{ll}
    (xx',xy'+x'y+2yz',xz'+x'z+2zz'),&t\geq r;\\[1ex]
    (xx',xy'+x'y+2y'z,xz'+x'z+2zz'),&t<r.
  \end{array}\right.
 \end{equation*}
Denote by $\rho_k\colon \Z/l\to\Z/l$ the multiplication by $k$ on $\Z/l$, $l\geq 2$. By Lemma \ref{lem:ECC-eqs}, with the notation $f=(x,y,z)$, $\phi$ can be expressed as
\begin{equation*}
  \phi(x,y,z)=\left\{\begin{array}{ll}
  (\rho_x, \rho_{x+2z}),&\text{ if } t\geq r;\\
  (\rho_{x+2z},\rho_x),&\text{ if } t<r.
\end{array}\right.
\end{equation*} 
Then in both cases we have 
\begin{align*}
  \ker(\phi)&=\{(1,y,0)\in\E(C):y\in\zz{\min(r,t)+1}\}\\
  &=\zz{\min(r,t)+1}\langle(1,1,0)\rangle.
\end{align*}
Write $Q=\Aut(\pi_{n+1}(C))\oplus\Aut(\pi^{n+1}(C))$ for simpliciy. By Lemma \ref{lem:ECC}, the image subgroup $\phi(\E(C))$ has order $2^{t+r}$, which is also the order of $Q$. It follows that the homomorphism $\phi$ is surjective.
Define a map $\iota\colon Q\to \E(C)$ by 
\begin{align*}
  t\geq r:{}&\iota(\rho_x,\rho_{x+2z})=(x,0,z);\\
  t<r:{}&\iota(\rho_{x+2z},\rho_{x})=(x,0,z).
\end{align*}
It is clear that $\phi\iota=1_Q$.
If $t\geq r$, by the composition law we have 
\begin{align*}
  \iota\big((\rho_x,\rho_{x+2z})(\rho_{x'},\rho_{x'+2z'})\big)&=\iota(\rho_{xx'},\rho_{xx'+2(xz'+x'z+2zz')})\\
  &=(xx',0,xz'+x'z+2zz')\\
  &=(x,0,z)\circ (x',0,z'),\\
  &=\iota(\rho_x,\rho_{x+z})\circ \iota(\rho_{x'},\rho_{x'+2z'}).
  \end{align*}
If $t<r$, similar arguments show that $\iota$ is a homomorphism of groups.
Thus $\iota$ is a section of the epimorphism $\phi$.

Let $(x',0,z')=(x,0,z)^{-1}\in\E(C)$. Then the conjugation action of $\iota(Q)$ on $\ker(\phi)$ given by 
\[(x,0,z)(1,1,0)(x',0,z')=\left\{\begin{array}{ll}
  (1,1+2xz',0)&\text{ if }t\geq r\\
  (1,1+2x'z,0)&\text{ if }t<r
\end{array}\right.\]
is consistent with the group action described in the theorem.
\end{proof}

\subsection{Proofs of Theorems \ref{thm:LDU}, \ref{thm:Ecc-H} and Corollary \ref{cor:Ecc:n+2}}\label{sec:ECCsubgrps}

Let $R$ be a ring with identity $1$. Recall that an ideal $I$ of $R$ is \emph{quasi-regular} if $1+I\subseteq U(R)$, where $U(R)$ denotes the set of units of $R$. Idempotents $e_1,\cdots,e_m$ of $R$ are said to be \emph{complete  orthogonal} if $e_1+\cdots+e_m=1$, $e_ie_j=0$ for $i\neq j$. Given a complete orthogonal idempotent $e_1,\cdots,e_m$ of $R$, 
Consider the following subsets of $U(R)$:
\begin{align*}
  L&\coloneqq\{r\in U(R)|e_ire_i=e_i \text{ for all $i$}, \text{ and }e_ire_j=0 \text{ for } i<j\},\\
  D&\coloneqq\{r\in U(R)|e_ire_j=0 \text{ for any } i\neq j\},\\
  U&\coloneqq\{r\in U(R)|e_ire_i=e_i \text{ for all $i$}, \text{ and }e_ire_j=0 \text{ for } i>j\}.
\end{align*} 
Due to Pave{\v{s}}i{\'c} \cite{Pavesic10}, we say that $U(R)$ admits an \emph{LDU-decomposition}, denoted by $U(R)=L\cdot D\cdot U$, if every element of $U(R)$ can be written canonically and uniquely as a product $ldu$ with $l\in L,d\in U,u\in U$.

\begin{lemma}[Theorem 4.13 of \cite{Pavesic10}]\label{lem:LDU}
  If $I$ is a quasi-regular ideal of $R$, then $1+I$ admits an ``LDU"-decomposition with respect to any set of complete orthogonal idempotents.
\end{lemma}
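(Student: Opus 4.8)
The plan is to carry out block Gaussian elimination inside the Peirce decomposition $R=\bigoplus_{1\le i,j\le m}e_iRe_j$ attached to the given complete orthogonal idempotents; quasi-regularity of $I$ enters precisely to guarantee that every pivot that arises is invertible. First I would record three routine preliminaries: (i) $1+I$ is a subgroup of $U(R)$ (closed under products since $I$ is an ideal, and $v(1+x)=1$ forces $v-1=-xv\in I$); (ii) each corner ring $e_iRe_i$ has identity $e_i$ and $e_iIe_i$ is a quasi-regular ideal of it --- because $1+y$ with $y\in e_iIe_i$ commutes with $e_i$, hence so does its inverse $v$ in $R$, and then $e_ive_i$ inverts $e_i+y$ inside $e_iRe_i$; (iii) $L$, $D$, $U$ are subgroups of $U(R)$, since for $l\in L$ the element $l-1$ is strictly lower triangular in the $m$-step Peirce grading, hence nilpotent, so $l^{-1}=1+(\text{strictly lower triangular})$, and products of such stay in $L$ by a direct index check (similarly for $U$; $D$ is obviously a group). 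One then notes that the factors produced below automatically lie in $1+I$, so the claim reduces to: every $a\in 1+I$ factors as $a=l\,d\,u$ for a unique triple $(l,d,u)\in L\times D\times U$.

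For existence I would induct on $m$, the case $m=1$ being trivial. Write $f=e_1$, $f'=1-e_1$, and let $a_{11}=faf$, $a_{1*}=faf'$, $a_{*1}=f'af$, $a_{**}=f'af'$ be the Peirce blocks; since $a\in 1+I$, fact (ii) makes $a_{11}\in f+fIf$ a unit of $fRf$, with inverse $a_{11}^{-1}$ there. Put $u_1=1+a_{11}^{-1}a_{1*}\in U$ and $l_1=1+a_{*1}a_{11}^{-1}\in L$; the correction terms square to zero, so these are invertible and lie in $1+I$, and a one-line block multiplication shows $b:=l_1^{-1}a\,u_1^{-1}=a_{11}\oplus s$ is block diagonal, with $s=a_{**}-a_{*1}a_{11}^{-1}a_{1*}\in f'+f'If'$ the Schur complement, automatically a unit of $f'Rf'$. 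Since $e_2,\dots,e_m$ are complete orthogonal idempotents of $f'Rf'$ and $f'If'$ is quasi-regular there, the induction hypothesis factors $s=l'd'u'$ inside $f'Rf'$; using that $fRf$ and $f'Rf'$ commute one rewrites $a_{11}\oplus s=(e_1+l')(a_{11}+d')(e_1+u')$, hence $a=\big[l_1(e_1+l')\big]\,(a_{11}+d')\,\big[(e_1+u')u_1\big]$, and the three bracketed factors lie in $L$, $D$, $U$ respectively (for $L$ and $U$ one uses the subgroup property together with $e_1+l'\in L$ and $e_1+u'\in U$, the new index $1$ contributing only on or below, resp.\ on or above, the diagonal).

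Uniqueness is purely formal. If $a=l\,d\,u=\tilde l\,\tilde d\,\tilde u$ with all factors in $L$, $D$, $U$, set $u'=\tilde u u^{-1}\in U$ and $p=\tilde l^{-1}l=\tilde d\,u'\,d^{-1}\in L$. Since $\tilde d$ and $d^{-1}$ are block diagonal with invertible Peirce blocks, $e_ipe_j=(e_i\tilde de_i)(e_iu'e_j)(e_jd^{-1}e_j)$ for all $i,j$; for $i<j$ this vanishes because $p\in L$, which forces $e_iu'e_j=0$, and together with $u'\in U$ this yields $u'=1$, i.e.\ $\tilde u=u$. Then $p=\tilde d\,d^{-1}$ is block diagonal and lies in $L$, so $p=\sum_i e_i=1$, giving $\tilde l=l$ and $\tilde d=d$.

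I expect the main obstacle to be not conceptual but the Peirce-grading bookkeeping: one must verify carefully that the single elimination step really does clear the first block row and column and that the resulting Schur complement lands in $f'+f'If'$ (so the induction hypothesis applies inside $f'Rf'$), and that the reassembled triangular factors satisfy the exact vanishing conditions defining $L$, $D$, $U$. Quasi-regularity of $I$ is invoked at a single but decisive point --- it forces the pivot $a_{11}$, and inductively every subsequent pivot, to be a unit --- which is exactly what fails for an arbitrary ideal.
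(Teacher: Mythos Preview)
The paper does not prove this lemma at all: it is quoted verbatim as Theorem~4.13 of \cite{Pavesic10} and used as a black box in the proof of Theorem~\ref{thm:LDU}. There is therefore no ``paper's own proof'' to compare your proposal against.

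That said, your argument is the standard one and is essentially correct. Block Gaussian elimination in the Peirce decomposition, with quasi-regularity of $I$ guaranteeing invertibility of each successive pivot (via the transfer of quasi-regularity to the corner rings $e_iRe_i$), is exactly how this result is usually established; the Schur-complement induction and the uniqueness argument you sketch both go through. The only place to be slightly careful is in checking that $l_1(e_1+l')\in L$ and $(e_1+u')u_1\in U$ with the precise diagonal condition $e_ire_i=e_i$: since $l_1$ and $u_1$ already have $e_i$-diagonal blocks equal to $e_i$, and $e_1+l'$, $e_1+u'$ do as well (the induction giving $e_il'e_i=e_i$ for $i\ge 2$), the product diagonal blocks are $e_i\cdot e_i=e_i$ as required. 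Beyond that bookkeeping, nothing is missing.
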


Recall that 
\begin{align*}
  \E_\sharp^k(X)&=\{f\in \E(X)|f_\sharp=id\colon \pi_i(X)\to\pi_i(X),0\leq i\leq k\},\\
  \E_\ast(X)&=\{f\in \E(X)|f_\ast=id\colon H_i(X)\to H_i(X),~i\geq 0\}.
\end{align*} 
In the stable range,  the subsets 
\begin{align*}
  \mathcal{Z}_\sharp^{n+l}(X,Y)&=\{f\in [X,Y]|f_\sharp=0\colon \pi_i(X)\to\pi_i(Y),~\forall i\leq k\},\\
  \mathcal{Z}_\ast(X,Y)&=\{f\in [X,Y]|f_\ast=0\colon H_i(X)\to H_i(Y),~\forall i\geq 0\}
\end{align*}
are subgroups of $[X,Y]$ under addition; the set $[X\vee Y,X\vee Y]$ is a ring with identity $1=1_{X\vee Y}$, and contains the obvious idempotents 
\[e_X=\mat{1_X}{0}{0}{0},\quad e_Y=\mat{0}{0}{0}{1_X}.\]

\begin{proof}[Proof of Theorem \ref{thm:LDU}]
 By induction on $m$, it suffices to prove the theorem in the case $m=2$. 
 Observe that there holds an equality as subgroups of $\E(X\vee Y)$: 
 \[\E_\sharp^{n+l}(X\vee Y)=1+\mathcal{Z}_\sharp^{n+l}(X\vee Y,X\vee Y).\]
Then Lemma \ref{lem:LDU} implies the LDU-decomposition 
\[\E_\sharp^{n+l}(X\vee Y)=\mat{1_X}{0}{\mathcal{Z}_\sharp^{n+l}(X,Y)}{1_Y}\mat{\E_\sharp^{n+l}(X)}{0}{0}{\E_\sharp^{n+l}(Y)}\mat{1_X}{\mathcal{Z}_\sharp^{n+l}(Y,X)}{0}{1_Y}.\]
By \citep[Corollary 3.5]{ALM01}, the group $\E_\sharp^{n+l}(X\vee Y)$ is abelian for $l\geq 2$, and therefore the above LDU-decomposition of $\E_\sharp^{n+l}(X\vee Y)$ is a direct sum decomposition, which completes the proof.
\end{proof}

\begin{proposition}\label{prop:ECC:n+2}
  Let  $C=\CC,C'=C^{n+2,t'}_{r'}$.
  \begin{enumerate}
    \item\label{E1} $\E_\sharp^{n+2}(C)\cong \zz{\min(r,t)}\oplus\zz{}$.
    \item\label{E2} $\mathcal{Z}_\sharp^{n+2}(C,C')\cong \zz{\min(r,t')}\oplus\zz{}$.
    \item\label{E3} $\E_\ast(C)$ has order $2^{\min(r,t)+3}$, and $\mathcal{Z}_\ast(C,C')$ have order $2^{\min(r,t')+3}$.
  \end{enumerate}
  \begin{proof}
  (1) Write a self-map $f$ of $C$ coordinately as \[f=(x,y,z)=x\cdot 1_C+y\cdot i_{n+1}q_{n+1}+z\cdot \omega^t_r,\]
  where $x\in\zz{\max(r,t)+1},y\in\zz{\min(r,t)+1},z\in\zz{\min(r,t)}$.
  Then by Lemma \ref{lem:ECC-eqs} we compute that $(x,y,z)\in\E_\sharp^{n+2}(C)$ if and only if 
 \begin{align*}
  t\geq r:\quad & x=1,y=2u,z=2^{r-1}\epsilon, u\in\zz{r},\epsilon=0,1;\\
  t<r:\quad & x=1+2^{r}\varepsilon,y=2v,z=0, v\in\zz{t},\varepsilon=0,1.
 \end{align*}
It follows that $\E_\sharp^{n+2}(\CC)\cong \zz{\min(r,t)}\oplus\zz{}$, which is generated by $(1,2,0),(1,0,2^{r-1})$ if $t\geq r$; otherwise by $(1,2,0),(1+2^r,0,0)$.

(2) By the group $[C,C']$ and its generators, we divide the discussion into three cases. Let $m=\min(r,t')$. Utilizing relation formulas given by Theorem \ref{thm:eqs}, the following arguments can be carefully verified, the details are omitted here.
\begin{enumerate}[$(i)$]
  \item If $t'\geq t\geq r'\leq r$,  write a map $f\colon C\to C'$ by 
  \[f=(x,y,z)=x\cdot  L(\chi)+y\cdot i_{n+1}q_{n+1}+z\cdot i_{M'}B(\chi^{r+1}_{r'})\bar{\xi}_rq_{\uc},\]
  where $x\in\zz{t+1},y\in\zz{m},z\in\zz{r'}$. There hold formulas:
  \begin{align*}
    (x,y,z)(i_n)&=(x+2z)\cdot i_n,\\
    (x,y,z)(i_{n+1})&=2^{t'-t}x\cdot i_{n+1},\\
    (x,y,z)(i_MB(\chi^1_{r'})\tilde{\eta})&=2^{r-r'}x\cdot i_{M'}B(\chi^1_{r'})\tilde{\eta}+y\cdot i_{n+1}q_{n+1}.
  \end{align*}
It follows that $f\in \mathcal{Z}_\sharp^{n+2}(C,C')$ if and only if \[x=0;\quad y\in \langle 2\rangle \subseteq \zz{m+1};\quad z=2^{r'-1}\epsilon,\epsilon=0,1.\]

  \item If $t'\geq t<r'\leq r$, write a map $f\colon C\to C'$ by 
  \[f=(x,y,z)=x\cdot L(\chi)+y\cdot i_{n+1}q_{n+1}+z\cdot i_{\oc'}\tilde{\xi}_{t'}B(\chi^t_{r'+1})q_M,\]
  where $x\in\zz{r'+1},y\in\zz{m},z\in\zz{t}$. There hold formulas:
  \begin{align*}
    (x,y,z)(i_n)&=x\cdot i_n,\\
    (x,y,z)(i_{n+1})&=2^{t'-t}(x+2z)\cdot i_{n+1},\\
    (x,y,z)(i_{M'}B(\chi^1_{r'})\tilde{\eta})&=2^{r-r'}x\cdot i_{M'}B(\chi^1_{r'})\tilde{\eta}+y\cdot i_{n+1}q_{n+1}.
  \end{align*}
  It follows that $f\in \mathcal{Z}_\sharp^{n+2}(C,C')$ if and only if \[x=2^{r'}\epsilon,\epsilon=0,1;\quad y\in \langle 2\rangle \subseteq \zz{m+1};\quad z=0.\] 
  
  \item If $t'<t\vee r'>r$, write a map $f\colon C\to C'$ by 
  \[f=(x,y,z)=x\cdot i_{\oc'}\tilde{\xi}_{t'}B(\chi^t_{t'+1})q_M+y\cdot i_{n+1}q_{n+1}+z\cdot i_{M'}B(\chi^{r+1}_{r'})\bar{\xi}_rq_{\uc},\]
  where $x\in \zz{t'+1} ,y\in\zz{m},z\in \zz{r+1}$.
 There hold formulas:
 \begin{align*}
  (x,y,z)(i_n)&= 2^{r'-r}z\cdot i_n,\\
  (x,y,z)(i_{n+1})&= x\cdot i_{n+1},\\
  (x,y,z)(i_{M'}B(\chi^1_{r'})\tilde{\eta})&= z\cdot i_{M'}B(\chi^1_{r'})\tilde{\eta}+ y\cdot i_{n+1}q_{n+1}.
 \end{align*}
 It follows that $f\in \mathcal{Z}_\sharp^{n+2}(C,C')$ if and only if \[x=0;\quad y\in \langle 2\rangle \subseteq \zz{m+1};\quad z=2^{r}\epsilon,\epsilon=0,1.\] 
\end{enumerate}
Therefore we get the isomorphism $\mathcal{Z}_\sharp^{n+2}(C,C')\cong\zz{m} \oplus\zz{}$.

(3) The homology groups of $\CC$ is given by (\ref{eq:homology}). 

(i) By computations in (1), we get $(x,y,z)\in\E_\ast(C)$ if and only if 
\[x=1+2^{\max(r,t)}\varepsilon,\quad y\in\zz{\min(r,t)+1},\quad z=2^{\min(r,t)-1}\epsilon\]
for some $\varepsilon,\epsilon\in\{0,1\}$. Hence $\E_\ast(C)$ has order $2^{\min(r,t)+3}$.

(ii) By computations in (2) we get $(x,y,z)\in\mathcal{Z}_\ast(C,C')$ if and only if
$y\in\zz{\min(r,t')+1}$ and 
\[\left\{\begin{array}{ll}
  x=2^{t}\varepsilon, z=2^{r'-1}\epsilon & \text{if }t'\geq t\geq r'\leq r,\\
  x=2^{r'}\varepsilon,z=2^{t-1}\epsilon &\text{ if }t'\geq t<r'\leq r,\\
  x=2^{t'}\varepsilon,z=2^{r}\epsilon&\text{ if }t'<t\vee r'>r.
\end{array}\right.\]
Thus $\mathcal{Z}_\ast(C,C')$ has order $2^{\min(r,t')+3}$.
  \end{proof}
\end{proposition}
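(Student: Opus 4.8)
The plan is to reduce each of the four membership conditions to a system of linear congruences on the coordinates of a map, using the explicit bases of $[C,C]$ and $[C,C']$ already in hand together with the composition and naturality relations of Theorem~\ref{thm:eqs}. Throughout I write a self-map of $C=\CC$ as $f=(x,y,z)=x\cdot 1_C+y\cdot i_{n+1}q_{n+1}+z\cdot\omega^t_r$ in the basis fixed before Proposition~\ref{prop:ECC}, and a map $C\to C'$ in the basis supplied by Proposition~\ref{prop:CC'}; the latter splits the argument into the three regions $r'>r\vee t'<t$, $\;t'\geq t<r'\leq r$, and $t'\geq t\geq r'\leq r$, and one must also keep $t\geq r$ and $t<r$ separate throughout.

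For parts~(1) and~(2) the relevant homotopy groups are $\pi_n(C)=\zz{r}\langle i_n\rangle$, $\pi_{n+1}(C)=\zz{t+1}\langle i_{n+1}\rangle$ and $\pi_{n+2}(C)=\zz{}\langle i_{n+1}\eta\rangle\oplus\zz{}\langle i_MB(\chi)\tilde{\eta}\rangle$ by Proposition~\ref{prop:X=spheres}, so $f_\sharp$ is completely determined by the composites $f i_n$, $f i_{n+1}$, $f(i_{n+1}\eta)$ and $f(i_MB(\chi)\tilde{\eta})$. For part~(1) these are exactly the expressions in Lemma~\ref{lem:ECC-eqs}; demanding $f_\sharp=\mathrm{id}$ on all three groups pins $x$ to $1$ (if $t\geq r$) or to $\{1,1+2^{r}\}$ (if $t<r$), forces $y\in\langle2\rangle$, and leaves $z$ in a two-element subgroup, which presents $\E_\sharp^{n+2}(C)$ as a set of triples; one then identifies it with $\zz{\min(r,t)}\oplus\zz{}$ by producing generators $(1,2,0)$, $(1,0,2^{\min(r,t)-1})$ (or $(1+2^{r},0,0)$ when $t<r$) and checking, via the composition law in the proof of Theorem~\ref{thm:ECC}, that they commute and have orders $2^{\min(r,t)}$ and $2$. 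Part~(2) is the same computation with $0$ in place of $\mathrm{id}$: in each of the three regions I would compute $f i_n$, $f i_{n+1}$ and $f(i_MB(\chi)\tilde{\eta})$ directly from the relations for $\tilde{\xi}$, $\tilde{\vartheta}$, $L(\chi)$, $\bar{\xi}$, $B(\chi)$ in Theorem~\ref{thm:eqs}, impose vanishing, and solve; in each region the solution set is $\zz{\min(r,t')}\oplus\zz{}$.

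For part~(3) I would use the homology computation \eqref{eq:homology}, $H_n(C)=\zz{r}\langle(i_n)_\ast\sigma_n\rangle$ and $H_{n+1}(C)=\zz{t}\langle(i_{n+1})_\ast\sigma_{n+1}\rangle$, together with the analogous description for $C'$. Since $i_n$ and $i_{n+1}$ carry the homology generators, $f_\ast$ on $H_n$ and $H_{n+1}$ is read off from the composites $f i_n$ and $f i_{n+1}$ already computed, now reduced modulo $2^{r}$ and $2^{t}$. Imposing $f_\ast=\mathrm{id}$ (resp. $f_\ast=0$) leaves $y$ free in $\zz{\min(r,t)+1}$ (resp. $\zz{\min(r,t')+1}$) while constraining each of $x$ and $z$ to two admissible values, so the cardinalities are $2\cdot 2^{\min(r,t)+1}\cdot 2=2^{\min(r,t)+3}$ and $2^{\min(r,t')+3}$.

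The genuinely laborious step is the case analysis in~(2): unlike $[C,C]$, the chosen basis of $[C,C']$ itself changes with the region, so the composites $f i_n$, $f i_{n+1}$, $f(i_MB(\chi)\tilde{\eta})$ must be recomputed in each of the three regions from the appropriate relations of Theorem~\ref{thm:eqs}, and one must carefully track which cyclic group each coordinate lives in before solving the congruences. A secondary point is that~(1) and~(2) assert the isomorphism type and not merely the order, so after exhibiting generators one must confirm closure under the composition laws of Theorem~\ref{thm:ECC} (respectively under addition).
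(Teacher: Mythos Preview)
Your proposal is correct and follows essentially the same approach as the paper: coordinate representation in the explicit bases of Proposition~\ref{prop:CC'}, computation of the composites $fi_n$, $fi_{n+1}$, $f(i_MB(\chi)\tilde{\eta})$ via Theorem~\ref{thm:eqs} and Lemma~\ref{lem:ECC-eqs}, the same three-region split for~(2), and the reduction modulo $2^{r}$, $2^{t}$ for~(3). One small slip in wording: in the case $t<r$ of~(1) it is $x$, not $z$, that ranges over two values (the paper obtains $z=0$ there), but your stated generators $(1,2,0)$ and $(1+2^{r},0,0)$ are correct, so this does not affect the argument.
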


\begin{proof}[Proof of Corollary \ref{cor:Ecc:n+2}]
  A direct consequence of Theorem \ref{thm:LDU} and Proposition \ref{prop:ECC:n+2}. 
\end{proof}

\begin{lemma}\label{lem:Ecc-H}
  Let $X,Y$ be $\an$-complexes with $H_{n+2}(X)=H_{n+2}(Y)=0$.
\begin{enumerate}
  \item $\mathcal{Z}_\sharp^{n+2}(X,Y)\leq \mathcal{Z}_\sharp^{n+1}(X,Y)\leq \mathcal{Z}_\ast(X,Y)$ as subgroups under addition.
  \item $\E_\sharp^{n+2}(X)\trianglelefteq \E_\sharp^{n+1}(X)\trianglelefteq \E_\ast(X)$ as normal subgroups.
\end{enumerate}

\begin{proof}
(1) It suffices to show $\mathcal{Z}_\sharp^{n+1}(X,Y)\subseteq \mathcal{Z}_\ast(X,Y)$. Given a map $f\colon X\to Y$ with $\pi_{n+i}(f)=0$ for $i=0,1$, the naturality of the Hurewicz homomorphisms
\[
  h_n\colon \begin{tikzcd}[sep=small]
    \pi_n(X)\ar[r,"\cong"]&H_n(X)
  \end{tikzcd},\quad h_{n+1}\colon \begin{tikzcd}[sep=small]
    \pi_{n+1}(X)\ar[r,two heads]& H_{n+1}(X)
  \end{tikzcd}
\]
with respect to $f$ implies that $H_{n+1}(f)=0,H_n(f)=0$. Since $H_{n+i}(X)=0$ for $i\geq 2$, we get $f\in \mathcal{Z}_\ast(X,Y)$.

(2) Given a map $f\in \E(X)$	such that $\pi_{n+i}(f)=id$ for $i=0,1$. The same arguments show that 
$H_n(f)$ and $H_{n+1}(f)$ are the identities; that is, $f\in\E_\ast(X)$. Thus we have the inclusion $\E_\sharp^{n+1}(X)\subseteq \E_\ast(X)$, which completes the proof.
\end{proof}	
\end{lemma}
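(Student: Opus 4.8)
\textbf{Proof plan for Lemma~\ref{lem:Ecc-H}.}
The plan is to derive both statements from a single observation: for any $\an$-polyhedron $X$ with $H_{n+2}(X)=0$, a map that is trivial (resp. the identity) on $\pi_n$ and $\pi_{n+1}$ is automatically trivial (resp. the identity) on all reduced homology. The engine is the naturality of the Hurewicz homomorphism together with the vanishing of homology above degree $n+1$, which is forced by the dimension bound $\dim X\leq n+2$ and the assumption $H_{n+2}(X)=0$. Concretely, since $X$ is $(n-1)$-connected of dimension $\leq n+2$, its only possibly nontrivial reduced homology sits in degrees $n,n+1,n+2$; the hypothesis kills degree $n+2$, so $H_i(X)$ can be nonzero only for $i=n,n+1$.

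For part~(1), I would start by reducing the chain of inclusions to the single nontrivial containment $\mathcal{Z}_\sharp^{n+1}(X,Y)\subseteq\mathcal{Z}_\ast(X,Y)$; the inclusion $\mathcal{Z}_\sharp^{n+2}(X,Y)\subseteq\mathcal{Z}_\sharp^{n+1}(X,Y)$ is immediate from the definitions, since vanishing on $\pi_i$ for all $i\leq n+2$ certainly forces vanishing for all $i\leq n+1$. Given $f\in\mathcal{Z}_\sharp^{n+1}(X,Y)$, so that $\pi_n(f)=0$ and $\pi_{n+1}(f)=0$, I invoke naturality of the Hurewicz maps $h_n\colon\pi_n(X)\xrightarrow{\cong}H_n(X)$ (an isomorphism in the first nonvanishing degree) and $h_{n+1}\colon\pi_{n+1}(X)\twoheadrightarrow H_{n+1}(X)$ (an epimorphism in the next degree, by the Hurewicz theorem for $(n-1)$-connected spaces). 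The commuting squares then give $H_n(f)=0$ and $H_{n+1}(f)=0$; combined with $H_i(X)=0$ for $i\geq n+2$, this yields $H_i(f)=0$ for all $i$, i.e.\ $f\in\mathcal{Z}_\ast(X,Y)$. Note the epimorphism for $h_{n+1}$ is exactly what makes surjectivity of $h_{n+1}$ suffice to conclude $H_{n+1}(f)=0$ from $\pi_{n+1}(f)=0$.

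For part~(2), I apply the same argument verbatim with $Y=X$ and with $f$ an equivalence inducing the identity: if $f\in\E_\sharp^{n+1}(X)$ then $\pi_n(f)=\mathrm{id}$ and $\pi_{n+1}(f)=\mathrm{id}$, so naturality gives $H_n(f)=\mathrm{id}$ and $H_{n+1}(f)=\mathrm{id}$, hence $f\in\E_\ast(X)$; this establishes the subset inclusions $\E_\sharp^{n+2}(X)\subseteq\E_\sharp^{n+1}(X)\subseteq\E_\ast(X)$. The remaining point is \emph{normality}. For $\E_\sharp^{n+2}(X)\trianglelefteq\E_\sharp^{n+1}(X)$ and $\E_\sharp^{n+1}(X)\trianglelefteq\E_\ast(X)$, I would argue functorially: each subgroup is the kernel of a homomorphism from the ambient group. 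Indeed $\E_\sharp^{k}(X)$ is the kernel of the natural map $\E(X)\to\prod_{i\leq k}\Aut(\pi_i(X))$, $f\mapsto(\pi_i(f))_i$, and the restriction of this homomorphism to any subgroup has the corresponding $\E_\sharp$ as its kernel; kernels are always normal. Thus $\E_\sharp^{n+2}(X)=\ker\big(\E_\sharp^{n+1}(X)\to\Aut(\pi_{n+2}(X))\big)$ is normal in $\E_\sharp^{n+1}(X)$, and $\E_\sharp^{n+1}(X)=\ker\big(\E_\ast(X)\to\Aut(\pi_n(X))\oplus\Aut(\pi_{n+1}(X))\big)$ is normal in $\E_\ast(X)$ once we know these subgroups are comparable, which part of the inclusion chain guarantees.

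The main obstacle is not computational but conceptual: making sure the naturality argument is deployed in the correct degree ranges. The delicate point is that $h_{n+1}$ is only an epimorphism, not an isomorphism, so one must phrase the conclusion as ``$H_{n+1}(f)$ is determined by $\pi_{n+1}(f)$ via a surjection'' rather than as a bijective correspondence; surjectivity is exactly enough to transport the vanishing (or the identity) from $\pi_{n+1}$ down to $H_{n+1}$. Everything else—the reduction of the inclusion chain to one containment, the identification of $\E_\sharp$ groups as kernels yielding normality—is formal, so the proof is short once the Hurewicz naturality is set up carefully.
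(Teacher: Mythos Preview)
Your proposal is correct and follows exactly the paper's approach: reduce to the single containment $\mathcal{Z}_\sharp^{n+1}\subseteq\mathcal{Z}_\ast$, then use naturality of the Hurewicz maps $h_n$ (isomorphism) and $h_{n+1}$ (epimorphism) together with the vanishing of homology above degree $n+1$. In fact you are slightly more thorough than the paper, which stops after establishing the inclusions and does not spell out the normality argument; your observation that each $\E_\sharp^{k}(X)$ arises as a kernel of a homomorphism to a product of automorphism groups is the natural way to fill that gap.
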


\begin{proof}[Proof of Theorem \ref{thm:Ecc-H}]
Write $C_i=C^{n+2,t_i}_{r_i}$, $i=1,\cdots,m$, and let $X=C_1\vee \cdots\vee C_m$. 
By Lemma \ref{lem:Ecc-H} there is an exact sequence 
\[0\to \E_\sharp^{n+2}(X)\to  \E_\ast(X)\xra{(\pi_{n+1},\pi_{n+2})}G_1\oplus G_2\to 0,\]
where $G_i=\pi_{n+i}(\E_\ast(X))\leq \Aut(\pi_{n+i}(X)), i=1,2$.
By Proposition \ref{prop:ECC:n+2} (\ref{E3}) we get that $\E_\ast(X)$ has order 
\[\prod_{i=1}^m|\E_\ast(C_i)|\cdot \prod_{i\neq j=1}^{m,m}|\mathcal{Z}_\ast(C_i,C_j)|=\prod_{i,j=1}^{m,m}2^{\min(r_i,t_j)+3}.\]
By Corollary \ref{cor:Ecc:n+2} it suffices to show that $G_1\cong G_2\cong (\zz{})^{m^2}$.

Suppose that $f=(f_{ij})\in \E_\ast(X)$. Then 
\[H_{n+1}(f_{ij})\equiv \delta_{ij}\pmod{2^{t_i}}.\]
Note that $H_{n+1}(X)\cong \zz{t_1}\oplus\cdots\zz{t_m}$ with each $\zz{t_i}$ generated by $(i_{n+1})_\ast\sigma_{n+1}$, while $\pi_{n+1}(X)\cong \zz{t_1+1}\oplus\cdots\zz{t_m+1}$ with each $\zz{t_i+1}$ generated by $i_{n+1}$. 
The coherence of generators of $H_{n+1}(X)$ and $\pi_{n+1}(X)$ then implies that 
\[\pi_{n+1}(f_{ij})=\delta_{ij}+2^{t_i}\varepsilon_{ij},~~\varepsilon_{ij}\in\{0,1\}.\]
 For another map $f'=(f_{ij}')\in\E_\ast(X)$, express $\pi_{n+1}(f')$ similarly, then the $(i,j)$-entry of matrix product $\pi_{n+1}(f_{ij})\cdot\pi_{n+1}(f_{ij}')$ is of the form 
\begin{align*}
  \sum_k [(\delta_{ik}+2^{t_i}\varepsilon_{ik})(\delta_{kj}+2^{t_k}\varepsilon_{kj})]&=\sum_k (\delta_{ik}\delta_{kj}+\delta_{ik}2^{t_k}\varepsilon_{kj}'+\delta_{kj}2^{t_i}\varepsilon_{ik})\\
  &=\delta_{ij}+2^{t_i}(\varepsilon_{ij}+\varepsilon_{ij}').
\end{align*}
It follows that $G_1$ is commutative, and therefore $G_1\cong (\zz{})^{m^2}$.

Recall that $\pi_{n+2}(C_1\vee \cdots\vee C_m)\cong (\zz{}\oplus\zz{})^{m^2}$, each direct summand $\zz{}\oplus\zz{}$ has a generating set $\{i_{n+1}\eta,i_MB(\chi)\tilde{\eta}\}$. From the computations in Proposition \ref{prop:ECC:n+2} we see that only the generator $i_{n+1}q_{n+1}$ of each group $[C_j,C_i]$ has effect on $\pi_{n+2}(C_1\vee \cdots\vee C_m)$. We have formulas
\[i_{n+1}q_{n+1}\circ (i_{n+1}\eta)=0,\quad i_{n+1}q_{n+1}\circ (i_MB(\chi)\tilde{\eta})=i_{n+1}\eta.\]
It follows that  
\[\pi_{n+2}(f_{ij})=\begin{pmatrix}
  \delta_{ij}&\epsilon_{ij} \\
  0&\delta_{ij}
\end{pmatrix}\in \mathrm{End}\big(\langle i_{n+1}\eta\rangle \oplus \langle i_MB(\chi)\tilde{\eta}\rangle\big) \]
for some $\epsilon_{ij}\in\{0,1\}$.
For another map $f'=(f_{ij}')\in\E_\ast(X)$, express $\pi_{n+2}(f')$ similarly, then the $(i,j)$-entry (block) of the matrix $\pi_{n+2}(f_{ij})\cdot\pi_{n+2}(f_{ij}')$ is of the form 
\[
  \sum_k\begin{pmatrix}
    \delta_{ik}&\epsilon_{ik} \\
    0&\delta_{ik}
  \end{pmatrix}\begin{pmatrix}
    \delta_{kj}&\epsilon_{kj}' \\
    0&\delta_{kj}
  \end{pmatrix}=\begin{pmatrix}
    \delta_{ij}&\epsilon_{ij}+\epsilon_{ij}' \\
    0&\delta_{ij}
  \end{pmatrix}.
\]
Thus $G_2$ is also commutative, and therefore $G_2\cong (\zz{})^{m^2}$.
\end{proof}

\bibliographystyle{amsplain}
\bibliography{Refs}

\providecommand{\bysame}{\leavevmode\hbox to3em{\hrulefill}\thinspace}
\providecommand{\MR}{\relax\ifhmode\unskip\space\fi MR }
\providecommand{\MRhref}[2]{%
  \href{http://www.ams.org/mathscinet-getitem?mr=#1}{#2}
}
\providecommand{\href}[2]{#2}
\begin{thebibliography}{10}

\bibitem{Toda2}
S.~Araki and H.~Toda, \emph{Multiplicative structures in {${\rm mod}\,q$} cohomology theories. {II}}, Osaka Math. J. \textbf{3} (1966), 81--120. \MR{202129}

\bibitem{ALM01}
M.~Arkowitz, G.~Lupton, and A.~Murillo, \emph{Subgroups of the group of self-homotopy equivalences}, Groups of homotopy self-equivalences and related topics ({G}argnano, 1999), Contemp. Math., vol. 274, Amer. Math. Soc., Providence, RI, 2001, pp.~21--32. \MR{1817000}

\bibitem{AM98}
M.~Arkowitz and K-{I} Maruyama, \emph{Self-homotopy equivalences which induce the identity on homology, cohomology or homotopy groups}, Topology and its Applications \textbf{87} (1998), no.~2, 133--154.

\bibitem{Baues85}
H.-J. Baues, \emph{On homotopy classification problems of {J}. {H}. {C}. {W}hitehead}, Algebraic topology, {G}\"{o}ttingen 1984, Lecture Notes in Math., vol. 1172, Springer, Berlin, 1985, pp.~17--55. \MR{825772}

\bibitem{Bauesbook}
\bysame, \emph{Homotopy {T}ype and {H}omology}, Oxford Mathematical Monographs, The Clarendon Press, Oxford University Press, New York, 1996, Oxford Science Publications. \MR{1404516}

\bibitem{BH91}
H.-J. Baues and M.~Hennes, \emph{The homotopy classification of {$(n-1)$}-connected {$(n+3)$}-dimensional polyhedra, {$n\geq 4$}}, Topology \textbf{30} (1991), no.~3, 373--408. \MR{1113684}

\bibitem{BC59}
E.H. Brown, Jr. and A.H. Copeland, Jr., \emph{An homology analogue of {P}ostnikov systems}, Michigan Math. J. \textbf{6} (1959), 313--330. \MR{110096}

\bibitem{Chang1950}
S.~Chang, \emph{Homotopy invariants and continuous mappings}, Proc. Roy. Soc. London Ser. A \textbf{202} (1950), 253--263. \MR{36508}

\bibitem{CL14}
H.W. Choi and K.Y. Lee, \emph{Certain self-homotopy equivalences on wedge products of {M}oore spaces}, Pacific J. Math. \textbf{272} (2014), no.~1, 35--57. \MR{3270171}

\bibitem{Cohenbook}
J.M. Cohen, \emph{Stable homotopy}, Lecture Notes in Mathematics, Vol. 165, Springer-Verlag, Berlin-New York, 1970. \MR{0273608}

\bibitem{CS22}
T.~Cutler and T.~So, \emph{The homotopy type of a once-suspended $6$-manifold and its applications}, Topol. Appl. (2022), 108213.

\bibitem{Hilton50}
P.J. Hilton, \emph{Calculation of the homotopy groups of {$A_n^2$}-polyhedra. {I}}, Quart. J. Math. Oxford Ser. (2) \textbf{1} (1950), 299--309. \MR{39251}

\bibitem{Hilton51}
\bysame, \emph{Calculations of the homotopy groups of {$A_n^2$}-polyhedra. {II}}, Quart. J. Math. Oxford Ser. (2) \textbf{2} (1951), 228--240. \MR{43462}

\bibitem{Hilton53}
\bysame, \emph{An introduction to homotopy theory}, Cambridge Tracts in Mathematics and Mathematical Physics, No. 43, Cambridge, at the University Press, 1953. \MR{56289}

\bibitem{Jeong10}
M.H. Jeong, \emph{Certain subgroups of self-homotopy equivalences of the wedge of two {M}oore spaces}, Commun. Korean Math. Soc. \textbf{25} (2010), no.~1, 111--117. \MR{2599692}

\bibitem{lipc23}
P.~Li, \emph{Homotopy types of suspended $4$-manifolds}, to appear in Algebr. Geom. Topol., arXiv: 2211.12741.

\bibitem{Oka84}
S.~Oka, \emph{Multiplications on the {M}oore spectrum}, Mem. Fac. Sci. Kyushu Univ. Ser. A \textbf{38} (1984), no.~2, 257--276. \MR{760188}

\bibitem{Pavesic10}
P.~Pave{\v{s}}i{\'c}, \emph{Stable self-homotopy equivalences}, Topology and its Applications \textbf{157} (2010), no.~17, 2757--2767.

\bibitem{Schmidt84}
T.~Schmidt, \emph{\href{https://uni-bonn.sciebo.de/s/8y0BcVcmrhlDORi}{Berechnung der {H}omotopiekategorie der $(n-1)$-fach zusammenh{\"a}ngenden $(n+2)$-dimensionalen {P}olyeder f{\"u}r $n>3$}}, Diplomarbeit Math., Institut der Universität Bonn (1984).

\bibitem{ST19}
T.~So and S.~Theriault, \emph{The suspension of a $4$-manifold and its applications}, to appear in Israel J. Math., arXiv: 1909.11129.

\bibitem{Whitehead48}
J.H.C. Whitehead, \emph{The homotopy type of a special kind of polyhedron}, Ann. Soc. Polon. Math. \textbf{21} (1948), 176--186 (1949). \MR{0030758}

\bibitem{Whitehead49}
\bysame, \emph{On simply connected, {$4$}-dimensional polyhedra}, Comment. Math. Helv. \textbf{22} (1949), 48--92. \MR{29171}

\bibitem{ZLP}
Z.~Zhu, P.~Li, and J.~Pan, \emph{Periodic problem on homotopy groups of {C}hang complexes {$C^{n+2,r}_r$}}, Homology, Homotopy and Applications \textbf{21} (2019), no.~2, 363--375. \MR{3963205}

\bibitem{ZP17}
Z.~Zhu and J.~Pan, \emph{The decomposability of a smash product of {$A^2_n$}-complexes}, Homology Homotopy Appl. \textbf{19} (2017), no.~1, 293--318. \MR{3647293}

\end{thebibliography}

\end{document}